\newtheorem{theorem}{Theorem}[section]
\newtheorem{definition}{Definition}[section]
\newtheorem{lemma}{Lemma}[section]
\newtheorem{remark}{Remark}[section]
\newtheorem{proposition}{Proposition}[section]
\numberwithin{equation}{section}
\renewcommand{\d}{\operatorname{d}}
\renewcommand{\u}{\mathbf{u}}
\newcommand{\w}{\mathbf{w}}
\newcommand{\ls}{\leqslant}
\newcommand{\ra}{\rightarrow}
\newcommand{\eps}{\varepsilon}
\renewcommand{\u}{{\bf u}}
\renewcommand{\v}{{\bf{v}}}
\newcommand{\z}{{\bf{z}}}
\renewcommand{\O}{\mathbb{T}^3}
\begin{document}

\title[Stationary solution to SEP in Bounded Domain]{Stationary solution to Stochastically Forced Euler-Poisson Equations in Bounded Domain:\\
 Part 2. 1-D Ohmic Contact Boundary}
\author{Yachun Li$^{1}$, Ming Mei$^{2,3,4}$, Lizhen Zhang$^{*,5}$}
\dedicatory{ {\footnotesize\it $^1$School of Mathematical Sciences, CMA-Shanghai, MOE-LSC, Shanghai Jiao Tong University, 200240, China}\\
{\footnotesize\it $^2$School of Mathematics and Statistics, Jiangxi Normal University, Nanchang, 330022, China} \\
{\footnotesize\it $^3$Department of Mathematics, Champlain College Saint-Lambert,
	    Saint-Lambert, Quebec, J4P 3P2, Canada} \\
{\footnotesize\it $^4$Department of Mathematics and Statistics, McGill University,
	      Montreal, Quebec, H3A 2K6, Canada  }\\
{\footnotesize\it $^5$School of Mathematical Sciences, Shanghai Jiao Tong University,
	     Shanghai, 200240, China }\\
{\tt Emails: ycli@sjtu.edu.cn; ming.mei@mcgill.ca; Zhanglizhen@sjtu.edu.cn;}
}

\begin{abstract}
In this paper, we establish the asymptotic stability of the steady-state for the 1-D stochastic Euler-Poisson equations with Ohmic contact boundary conditions forced by the Wiener process. We utilize Banach's fixed point theorem and the {\it a priori} energy estimates uniformly in time to ensure the global existence of solutions around the steady state. In contrast to the deterministic case, the presence of stochastic forces leads to the lack of temporal derivatives of momentum, posing challenges for energy estimates. Furthermore, Ohmic contact boundary conditions pose greater challenges for energy estimates compared to the insulating boundary conditions. To address this issue, we establish asymptotic stability concerning the spatial derivatives through weighted energy estimates for the estimates of stochastic integrals, employing a technique distinct from that of the deterministic case. Furthermore, we demonstrate the existence of an invariant measure based on the {\it a priori} energy estimates. This invariant measure precisely corresponds to the Dirac measure generated by the steady state, due to the time-exponential decay of the perturbed solutions around the steady state.
\end{abstract}

\date{\today}
\subjclass{Primary: 35B40, 35Q81; Secondary: 35D35, 35K51, 60H15, 82D37.}
\keywords{Stability, steady states, stochastic Euler-Poisson equations, cylindrical Wiener Process, contact boundary conditions}

\maketitle
 {\small \tableofcontents}
 \setcounter{tocdepth}{2}
\section{Introduction}
The model of the stochastic Euler-Poisson equations (SEP for short) provides a precise depiction of physical phenomena for the analysis and design of semiconductor devices \cite{deBisschop2018StochasticEI} since the stochastic effects occasionally lead to undesired defects and pattern roughness in chips.
Thus, we explore the dynamic model of semiconductors perturbed by stochastic forces within mathematical frameworks. In Part 1 \cite{Zhang-Li-Mei2024}, we studied the existence and the symptomatic behavior of the stationary solution to SEP in 3-D bounded domain with the insulating boundary conditions. The Ohmic contact boundary conditions pose more challenges than the insulating boundary conditions. For a multi-dimensional bounded domain with Ohmic contact boundary conditions, there are no results on the well-posedness of deterministic solutions around the steady states. Hence we merely consider the 1-D SEP. In 1-D bounded smooth domain $U$, the 1-D SEP reads as
\begin{equation}
\left\{\begin{array}{l}\label{1-d Euler Poisson}
\rho_{t}+ J_{x} =0, \\
 \d  J + \left( \left(\frac{J^{2}}{\rho}+P(\rho)\right)_{x}+\frac{J}{\tau}\right)\d t = \rho \Phi_{x}\d t + \mathbb{F}\left(\rho,J\right) \d  W, \\
\Phi_{xx} =\rho-b,
\end{array}\right.
\end{equation}
where $``\d "$ is the differential notation with respect to time $t$, $\rho$ is the electron density,  $J$ is the current, $P\left(\rho\right)$ is the pressure, $\Phi$ is the electrostatic potential, and $\tau$ is the velocity relaxation time, which is regarded as a constant, assumed to be $1$ without loss of generality thereafter. $b:=b(x)$ is called the doping profile, which is positive and immobile. We also introduce the electricity field $E$ satisfying $E=\Phi_{x}$.
Stochastic processes $\rho$, $J$, $\Phi$, $E$, and $P\left(\rho\right)$ are actually functions of $\omega$, $t$, and $x$, where $\omega$ is a sample in the probability space $\left(\Omega, \mathbb{P}\right)$. For convenience, we use the simplified notions $\rho$, $J$, $\Phi$, $E$ and $P\left(\rho\right)$ here and hereafter.
 $W$ is an $\mathcal{H}$-valued cylindrical Wiener process defined on the filtrated probability space $\left(\Omega, \mathfrak{F}, \mathbb{P}\right)$, where $\mathcal{H}$ is an auxiliary separable Hilbert space, and $\mathfrak{F}$ is the filtration, referring the definitions of filtration and Wiener process in Appendix \ref{append}. Let $\{e_{k}\}_{k=1}^{+\infty}$ be an orthonormal basis in $\mathcal{H}$. Then the cylindrical Wiener process is
$W=\sum\limits_{k=1}^{+\infty}e_{k}\beta_{k}$, where $\{\beta_{k}(t); k \in \mathds{N}, t\geqslant 0\}$ is a sequence of independent, real-valued standard Brownian motions.
Let $H$ be a Bochner space, $\mathbb{F}\left(\rho,J\right)$ be an $H$-valued operator from $ \mathcal{H}$ to $ \mathcal{H}$. Denoting the inner product in $\mathcal{H}$ as $  \langle\cdot,\cdot \rangle_{\mathcal{H}} $,
the inner product
\begin{equation}
\langle\mathbb{F}\left(\rho,J\right),e_{k}\rangle_{\mathcal{H}} = F_{k}\left(\rho,J\right)
\end{equation}
 is an $H$-valued vector function, which shows the strength of the external stochastic forces,
 \begin{equation}
 \mathbb{F}\left(\rho, J\right)\d  W=\sum\limits_{k=1}^{+\infty}F_{k}\left(\rho,J\right)\d  \beta_{k} e_{k},
  \quad \mathbb{F}\left(\rho, J\right)=\sum\limits_{k=1}^{+\infty}F_{k}\left(\rho,J\right) e_{k}.
 \end{equation}
We will write $F_{k}\left(\rho,J\right)$ as $F_{k}$ for short in the sequel.

We assume that
\begin{align}\label{condition for F}
 \mathbb{F}\left(\rho,J\right)\d  W=&\sum\limits_{k=1}^{+\infty}F_{k}\left(\rho,J\right)\d  \beta_{k} e_{k},\\
F_{k} = a_{k}JY\left(J\right),
 \quad  \sum a_{k}^{2}=1,  \quad  &\left|Y\left(J\right)\right|\ls C J, \quad \left|Y'\left(J\right)\right|\ls C, \quad \left|Y''\left(J\right)\right|\ls C,
\end{align}
where ${a_{k}}$ are positive constants. 
Regarding the Euler-Poisson equations \eqref{1-d Euler Poisson}, the Ohmic contact boundary condition is given by
\begin{align}\label{1-D contact boundary of rho}
\rho\left(\omega, t, 0\right)=\rho_{1},\quad \rho(\omega, t, 1)=\rho_{2},
\end{align}
where $\rho_{1}$ and $\rho_{2}$ are positive constants, and the boundary condition for the electrostatic potential $\Phi$ is required:
\begin{align}\label{1-D contact boundary of Phi}
\Phi\left(\omega, t, 0\right)= \Phi_{1}, \quad \Phi(\omega, t, 1)=\Phi_{2},
\end{align}
where $\Phi_{1}$ and $\Phi_{2}$ are constants.

 The stability problem of \eqref{1-d Euler Poisson} with \eqref{1-D contact boundary of rho}, \eqref{1-D contact boundary of Phi}, along with the following initial data in $\left(\Omega, \mathfrak{F}, \mathbb{P}\right)$
\begin{align}\label{initial conditions}
 \left(\rho_{0}(\omega, x),~J_{0}(\omega, x), ~\Phi_{0}(\omega, x) \right),
\end{align}
 is studied in this paper. We denote $\left(\rho_{0}(\omega, x),~J_{0}(\omega, x), ~\Phi_{0}(\omega, x) \right)$ by $\left(\rho_{0},~J_{0}, ~\Phi_{0}\right)$ for short.

For the multiple dimensional case, Guo-Strauss \cite{Guo2006StabilityOS} considered the deterministic 3-D Euler-Poisson equations in a bounded domain with insulating boundary, and showed the convergence of solutions to the 3-D subsonic steady-states. The stability analysis of semiconductor systems with the contact boundary conditions is more complicated compared to those with the insulating boundary conditions. This complexity arises due to the absence of studies on well-posedness of steady state for 2-D or 3-D deterministic semiconductor systems with the contact boundary conditions. Generally, there are no solutions or no uniqueness of solutions to the elliptic equation with the Dirichlet boundary conditions. Researchers make attempts to get close to this problem. Mei-Wu-Zhang \cite{MeiWu2021Stability} considered the stability of the steady-state for 3-D spherical symmetric semiconductors in a hollow ball, i.e., the radius $r>\eps$ is required, for some small constant $\eps$. Markowich \cite{Markowich1991OnSS} discussed the existence of the subsonic solution in two dimensions with the irrotational currents on the boundary. Further exploration into the stability of steady state solutions for the Cauchy problem was undertaken by Hattori \cite{Hattori1997}. The Cauchy problems to the deterministic Euler-Poisson equations were extensively studied in \cite{DMRS,Huang1,Huang2,Huang2011,Huang2011SIAM,Kawashima1984SystemsOA}. For the ill-posedness of solutions of multidimensional Cauchy problem, one can refer to Ogawa-Suguro-Wakui's work \cite{Ogawa-Suguro-Wakui2022}. For the case of free boundary with vacuum, we refer to \cite{Luo1,Mai,Zeng} and the references therein. For the formulation of singularities in compressible Euler-Poisson equations and the large time behavior of Euler equations with damping, one can refer to \cite{Sideris-Thomases-Wang} and \cite{Wang-Chen1998JDE}, respectively.

For the deterministic case with the 1-D contact boundary condition, Degond-Markowich \cite{Degond1990OnAsym} studied the existence of steady states and the uniqueness for a given current density $\bar{J}$. Li-Markowich-Mei \cite{Li-Markowich-Mei-2002} explored the existence and uniqueness of steady state solution for the flat doping profile. Guo-Strass \cite{Guo2006StabilityOS} also considered the steady state $\left(\bar{\rho}, \bar{J}, \bar{\Phi}\right)$, and constructed a global solution near the steady state. While the current $\bar{J}$ is small, $\bar{\rho}$, $\bar{\Phi}$, and the doping profile $b(x)$ have large variation. Then, the uniqueness is reconsidered by Nishibada-Suzuki \cite{Nishibata2007AsymptoticStability} for a given boundary voltage.

As we addressed in Part 1 \cite{Zhang-Li-Mei2024}, the stochastic forces exhibit at most H\"older-$\frac{1}{2}-$continuity in time $t$, leading to the decreased regularity of velocity over time. From a mathematical perspective, investigating the stochastic problem enables us to examine the long time behavior of solutions to the stochastic Euler-Poisson equations in the absence of strong regularity in time and determine whether the desired properties persist in the presence of specific types of noise, providing valuable theoretical guidance.

The solution to stochastic evolving systems is called the stationary solution provided that the increment of solutions during evolution is time-independent.  Originally, the study of stationary measures dates back to the works of Hopf \cite{Hopf1932TheoryOM}, Doeblin \cite{Doeblin1938}, Doob \cite{Doob1953}, Halmos \cite{Halmos1946, Halmos1947}, Feller \cite{Feller1957}, and Harris and Robbins \cite{Harris-Robbins1953, Harris1956}, for the theory of discrete Markov processes. The study of invariant measure of fluid models dates back to Cruzeiro \cite{Cruzeiro1989} for stochastic incompressible Navier-Stokes equations in 1989, by Galerkin approximation with dimensions $D\geqslant 2$.
Flandoli \cite{Flandoli1994} and Flandoli-Gatarek \cite{Flandoli1-Gatarek1995} proved the existence of an invariant measure for multidimensional incompressible Navier-Stokes equations by different methods with \cite{Cruzeiro1989}.
Mattingly \cite{Mattingly2002} proved the existence of exponentially attracting invariant measure with respect to initial data, for incompressible N-S equations.
Later, Goldys-Maslowski \cite{Goldys-Maslowski2005} showed that transition measures of 2-D stochastic Navier-Stokes equations converge exponentially fast to the corresponding invariant measures in the distance of total variation. Then, for the 3-D case, Da Prato and Debussche \cite{DaPrato-Debussche2003} constructed a transition semigroup for 3-D stochastic Navier-Stokes equations, which allows for rather irregular solutions without the uniqueness. Flandoli-Romito \cite{Flandoli-Romito2008} used the classical Stroock-Varadhantype argument to find the almost-sure Markov selection. 
For stochastic compressible Navier-Stokes equations, Breit-Feireisl-Hofmanov\'a-Maslowski \cite{Breit-Feireisl-Hofmanova-Maslowski2019} proved the existence of stationary solutions. 
Hofmanov\'a-Zhu-Zhu \cite{Hofmanova-Zhu-Zhu2022} selected the dissipative global martingale solutions to the stochastic incompressible Euler system, and obtained the non-uniqueness of strong Markov solutions. Very recently, they \cite{Hofmanova-Zhu-Zhu2024} showed that stationary solution to the Euler equations is a vanishing viscosities limit in law of stationary analytically weak solutions to Navier-Stokes equations.
In terms of the non-uniqueness studies, some scholars believe that a certain stochastic perturbation can provide
a regularizing effect of the underlying PDE dynamics. For instance, Flandoli-Luo \cite{Flandoli-Luo2021} showed that a noise of transport type prevents a vorticity blow-up in the incompressible Navier-Stokes equations. A linear multiplicative noise prevents the blow up of the velocity with high probability for the 3-D Euler system, which was shown by Glatt-Holtz-Vicol \cite{Glatt-Holtz-Vicol2014}. Tang \cite{Tang2023} extended the noise of transport type to pseudo-differential/multiplicative noise for stochastic {E}uler-{P}oincar\'e equations. 
To the best of our knowledge, the stationary solutions of SEP with Ohmic contact boundary have not been explored previously. For our SEP, with better regularity than Euler equations, we could show the existence and uniqueness of invariant measure in more regular space.

To study the asymptotic behavior of solutions to SEP, we first establish the global existence and uniqueness of perturbed solutions around the steady state for the Euler-Poisson equations. Subsequently, we demonstrate the existence of stationary solutions and invariant measure, based on the {\it a priori} energy estimates and weighted energy estimates.

 In the probability space $\left(\Omega, \mathfrak{F}, \mathbb{P}\right)$, the random variables $\left(\bar{\rho}(\omega, x), \bar{J}(\omega, x), \bar{\Phi}(\omega, x)\right)$ are supposed to satisfy the same equations as the steady states equations
\begin{align}
\left\{\begin{array}{l}\label{steady state for 1-D contact}
 \bar{J}\equiv Constant,\\
\left(\frac{\bar{J}^{2}}{\bar{\rho}}+P\left(\bar{\rho}\right)\right)_{x}+\bar{J}=\bar{\rho}\bar{\Phi}_{x},\\
\bar{\Phi}_{xx}=\bar{\rho}-b(x).
 \end{array}\right.
\end{align}
By multiplying $\frac{1}{\bar{\rho}}$ with \eqref{steady state for 1-D contact}, and taking divergence on the resultant equation, we have
\begin{align}\label{equation of bar rho}
\left(P'\left(\bar{\rho}\right)-\frac{\bar{J}^{2}}{\bar{\rho}^{2}}\right)\frac{1}{\bar{\rho}}\bar{\rho}_{xx}
+\left(\frac{4\bar{J}^{2}}{\bar{\rho}^{4}}+\frac{P''\left(\bar{\rho}\right)}{\bar{\rho}}-\frac{P'\left(\bar{\rho}\right)}{\bar{\rho}^{2}}\right)\left|\bar{\rho}_{x}\right|^{2}
+-\frac{\bar{J}}{\bar{\rho}^{2}}\bar{\rho}_{x}=\bar{\rho}-b(x).
\end{align}
We consider the case that $\bar{J}$ is sufficiently small so that the subsonic condition $P'\left(\bar{\rho}\right)-\frac{\bar{J}^{2}}{\bar{\rho}^{2}}>0$ holds. The equation of $\bar{\rho}$ \eqref{equation of bar rho} is uniformly elliptic. Since we consider the small perturbation around the steady state, the solutions $\rho$ and $J$ are subjected to the subsonic condition $P'\left(\rho\right)-\frac{J^{2}}{\rho^{2}}>0$.
The existence and uniqueness of the system \eqref{steady state for 1-D contact} with the contact boundary condition was obtained by \cite{Li-Markowich-Mei-2002}.
\begin{proposition}\label{existence of steady state} \cite{Guo2006StabilityOS}
Let $b(x)>0 $ in $\bar{U}$, $\bar{J}$ be a small constant, and $P:\left(0, \infty\right)\rightarrow \left(0, \infty\right)$ be smooth with $P\left(0\right)=0$. Then there exists $\left(\bar{\rho}(x), \bar{J}, \bar{\Phi}(x)\right)$, a unique smooth steady-state solution, with the Ohmic contact boundary condition
\begin{align}
\bar{\rho}\left(0\right)=\rho_{1}, \quad \bar{\rho}(1)=\rho_{2}, \quad \bar{\Phi}\left( 0\right)=\Phi_{1}, \quad \bar{\Phi}(1)=\Phi_{2},
\end{align}
such that there holds
\begin{align}
\bar{\rho}\left( x\right)>\underline{\rho}>0, \int_{U} \bar{\rho}(x)\d x = \int_{U}b(x)\d x,\quad \forall x\in \bar{U},
\end{align}
where $\underline{\rho}$ is a constant, and $\bar{Q}$ is supposed to satisfy
\begin{align}
 \bar{Q}_{x}(\bar{\rho})=\bar{\Phi}_{x}.
\end{align}
\end{proposition}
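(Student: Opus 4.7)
The plan is to reduce the steady-state system \eqref{steady state for 1-D contact} to a single second-order ODE for $\bar{\rho}$, first resolve the unperturbed problem $\bar{J}=0$ by a monotone/variational argument, and then obtain the full statement for small $\bar{J}$ via the implicit function theorem. The key observation is that once $\bar{\rho}$ is known, $\bar{\Phi}$ is recovered uniquely by the linear Poisson problem $\bar{\Phi}_{xx}=\bar{\rho}-b$ with Dirichlet data $\bar{\Phi}(0)=\Phi_{1}$, $\bar{\Phi}(1)=\Phi_{2}$, so everything reduces to solving \eqref{equation of bar rho} with the two-point boundary data $\bar{\rho}(0)=\rho_{1}$, $\bar{\rho}(1)=\rho_{2}$. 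Under the subsonic assumption $P'(\bar{\rho})>\bar{J}^{2}/\bar{\rho}^{2}$, the principal part is uniformly elliptic on compact subsets of $\{\bar{\rho}>0\}$, and the zeroth-order term $-\bar{\rho}$ has the right sign for the maximum principle.

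For $\bar{J}=0$ the momentum equation collapses to $(P(\bar{\rho}))_{x}=\bar{\rho}\,\bar{\Phi}_{x}$, so, setting $h(\bar{\rho})=\int_{1}^{\bar{\rho}}\frac{P'(s)}{s}\d s$, one finds $h(\bar{\rho})=\bar{\Phi}+C_{0}$ for a constant $C_{0}$ determined by the data. Poisson's equation then reduces to the strongly monotone semilinear Dirichlet problem
\begin{equation*}
\bar{\Phi}_{xx}=h^{-1}(\bar{\Phi}+C_{0})-b(x),\qquad \bar{\Phi}(0)=\Phi_{1},\quad \bar{\Phi}(1)=\Phi_{2}.
\end{equation*}
I would produce its unique smooth solution $\bar{\Phi}_{0}$ either by minimizing the strictly convex functional $\int_{U}\bigl(\tfrac{1}{2}|\Phi_{x}|^{2}+H(\Phi+C_{0})-b\,\Phi\bigr)\d x$, with $H$ an antiderivative of $h^{-1}$, or equivalently by the sub-/super-solution method using constant barriers built from $\min_{\bar{U}}b$ and $\max_{\bar{U}}b$. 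The corresponding density $\bar{\rho}_{0}=h^{-1}(\bar{\Phi}_{0}+C_{0})$ is smooth, attains the prescribed boundary values $\rho_{1},\rho_{2}$, and is bounded below by a positive constant via the maximum principle together with $b>0$.

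For small $\bar{J}\neq 0$ I would invoke the implicit function theorem in H\"older spaces. Define
\begin{equation*}
\mathcal{T}:\R\times\bigl\{\bar{\rho}\in C^{2,\alpha}(\bar{U}):\bar{\rho}(0)=\rho_{1},\,\bar{\rho}(1)=\rho_{2}\bigr\}\longrightarrow C^{0,\alpha}(\bar{U})
\end{equation*}
by sending $(\bar{J},\bar{\rho})$ to the left-hand side of \eqref{equation of bar rho} minus $\bar{\rho}-b$. Then $\mathcal{T}(0,\bar{\rho}_{0})=0$, and the Fr\'echet derivative $D_{\bar{\rho}}\mathcal{T}(0,\bar{\rho}_{0})$ is a uniformly elliptic second-order operator with principal part $\tfrac{P'(\bar{\rho}_{0})}{\bar{\rho}_{0}}\partial_{xx}$ and zeroth-order coefficient $-1$ arising from differentiating $\bar{\rho}-b$; hence by classical Schauder theory and the maximum principle it is an isomorphism from the subspace of functions vanishing on $\partial U$ onto $C^{0,\alpha}(\bar{U})$. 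The IFT then yields a smooth branch $\bar{J}\mapsto\bar{\rho}(\bar{J})$ defined for $|\bar{J}|$ small, and by continuity the subsonic condition and the positive lower bound $\bar{\rho}>\underline{\rho}>0$ persist along the branch. Recovering $\bar{\Phi}$ from the linear Poisson problem and defining $\bar{Q}_{x}(\bar{\rho}):=\frac{1}{\bar{\rho}}\bigl[(P(\bar{\rho})+\bar{J}^{2}/\bar{\rho})_{x}+\bar{J}\bigr]$, which equals $\bar{\Phi}_{x}$ by the momentum equation, completes the construction; the integral identity $\int_{U}\bar{\rho}\,\d x=\int_{U}b\,\d x$ follows by integrating Poisson's equation once the Dirichlet data on $\bar{\Phi}$ has been chosen compatibly with the imposed current $\bar{J}$, as is standard for the Ohmic contact problem.

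The main obstacle is verifying the invertibility of the linearization $D_{\bar{\rho}}\mathcal{T}(0,\bar{\rho}_{0})$ and ensuring the uniform preservation of the subsonic bound together with the positivity of $\bar{\rho}$ along the entire branch; the remaining steps are a classical unfolding of the implicit function argument, matching the treatments of \cite{Li-Markowich-Mei-2002,Guo2006StabilityOS}.
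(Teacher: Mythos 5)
The paper does not actually prove this proposition; it is cited from \cite{Guo2006StabilityOS} (with \cite{Li-Markowich-Mei-2002} for the companion uniqueness), so there is no in-paper argument to compare against. Your outline — reducing to the scalar ODE \eqref{equation of bar rho}, solving the unperturbed $\bar J=0$ problem by convexity, and continuing by the implicit function theorem — is the standard route, but as written it has a genuine gap in the treatment of the boundary data.

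With $\bar J$ fixed, there are only three free boundary data: $\rho_1$ and $\rho_2$ enter the ODE for $\bar\rho$, and the momentum equation then determines $\bar\Phi_x$ pointwise, so $\bar\Phi$ is fixed up to a single additive constant, which $\bar\Phi(0)=\Phi_1$ pins. The value of $\bar\Phi(1)$ is therefore an output of the construction, not a free input. In your $\bar J=0$ step this surfaces as the over-determined constant $C_0$: the two endpoint identities $C_0=h(\rho_1)-\Phi_1$ and $C_0=h(\rho_2)-\Phi_2$ form a genuine compatibility constraint, not ``$C_0$ determined by the data'' as you wrote. The same mismatch makes your concluding remark that $\int_U\bar\rho\,\d x=\int_U b\,\d x$ ``follows by integrating Poisson's equation'' unjustified: integration gives $\bar\Phi_x(1)-\bar\Phi_x(0)=\int_U(\bar\rho-b)\,\d x$, and Dirichlet data on $\bar\Phi$ alone does not force this Neumann-type difference to vanish. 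To close the argument you must either build in an explicit compatibility relation among $(\rho_1,\rho_2,\Phi_1,\Phi_2,\bar J)$ as a hypothesis, or — following \cite{Degond1990OnAsym,Li-Markowich-Mei-2002} — treat $\bar J$ as the unknown determined by the voltage drop $\Phi_2-\Phi_1$ and then show it is small for small drops, rather than prescribing $\bar J$ as a free parameter. A minor further point: the linearization $D_{\bar\rho}\mathcal{T}(0,\bar\rho_0)\psi=(h'(\bar\rho_0)\psi)_{xx}-\psi$ has zeroth-order coefficient $(h'(\bar\rho_0))_{xx}-1$, which need not be negative, so the maximum-principle invertibility should be routed through the substitution $v=h'(\bar\rho_0)\psi$ (giving $v_{xx}-v/h'(\bar\rho_0)$) rather than read off directly as you claim.
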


In this paper, we make a clear distinction between stationary solution for the stochastically forced system \eqref{1-d Euler Poisson} and the steady-state for the deterministic system \eqref{steady state for 1-D contact}.

For every $\omega\in \Omega$, $\left(\bar{\rho}(x, \omega), \bar{J}(\omega), \bar{\Phi}(x, \omega)\right)= \left(\bar{\rho}(x), \bar{J}, \bar{\Phi}(x)\right)$ holds. We denote by $\left(\bar{\rho}, \bar{J}, \bar{\Phi}\right)$ for convenience. Hence, the law of steady state is Dirac measure $\delta_{\bar{\rho}}\times \delta_{\bar{J}}\times \delta_{\bar{\Phi}}$, see Appendix \ref{append}.

Our result is about the existence and asymptotic stability of solutions near the steady state with the Ohmic contact boundary condition. We denote
\begin{align}
\sigma=\rho-\bar{\rho},&\quad j=J-\bar{J},\\
\phi=\Phi-\bar{\Phi}, &\quad \tilde{e}=\phi_{x}.\notag
\end{align}
We apply Banach's fixed point theorem and uniform energy estimates to establish the global existence of $\left(\sigma, j, \tilde{e} \right)$. Furthermore, we demonstrate the weighted energy estimates to achieve the asymptotic stability for steady states with the Ohmic contact boundary conditions. Based on the {\it a priori} estimates uniformly in $t$, by Krylov-Bogoliubov's theorem, we imply that the approximating measures will converges to an invariant measure. For this case, the invariant measure for \eqref{1-d Euler Poisson} precisely coincides with the law of steady state.

We denote $\left\|\cdot\right\|$ as the $L^{2}(U)$-norm; the $L^{\infty}(U)$-norm is denoted by $\left\|\cdot\right\|_{\infty}$; $\left\|\cdot\right\|_{k}$ means the $H^{k}$-norm. $\mathcal{L}\left(\cdot\right)$ is the law of random variables; see the definition of law in Appendix \ref{append}. $L^{2m}\left(\Omega; C\left([0,T]; H^{k}\left(U\right)\right)\right)$ is the space in which the $2m-$th moment of the $C\left([0,T]; H^{k}\left(U\right)\right)$-norm of random variables is bounded. The following is the statement of our main theorem.

\begin{theorem}
Let $\left(\bar{\rho},\bar{J}, \bar{\Phi}\right)$ be the smooth steady state of \eqref{1-d Euler Poisson} in Proposition \ref{existence of steady state}. If there exists a constant $\eps>0$ such that the initial condition $\left(\rho_{0}, J_{0}, \Phi_{0}\right)$ satisfies
\begin{align}\label{assumption for initial data}
\mathbb{E}\left[\left(\left\| \rho_{0} - \bar{\rho} \right\|_{2}^{2}+\left\| J_{0} -\bar{J} \right\|_{2}^{2}+\left\|\left(\Phi_{0}\right)_{x} -\bar{\Phi}_{x} \right\|^{2}\right)^{m}\right] \ls \eps^{2m},
\end{align}
for any $m\geqslant 2$, and
\begin{align}\label{Initial formula of Phi}
\left|\bar{J}\right|\ls \eps,
\quad \left(\Phi_{0}\right)_{xx} = \rho_{0} - b(x),
\end{align}
then in $\left(\Omega, \mathfrak{F}, \mathbb{P}\right)$, there exists a unique strong global-in-time solution of \eqref{1-d Euler Poisson}: $\rho$, $ J$ are in $L^{2m}\left(\Omega; C\left([0,T]; H^{2}\left([0,1]\right)\right)\right)$, $\Phi\in L^{2m}\left(\Omega; C\left([0,T]; H^{4}\left([0,1]\right)\right)\right)$ up to a modification. Moreover, there hold the asymptotic stability and the existence of invariant measure:
\begin{enumerate}
  \item There are positive constants $C$ and $\zeta$ such that the expectation
\begin{align}\label{small perturbation condition}
&\mathbb{E}\left[\left|\sup\limits_{s\in[0,t]}\left(\left\| \rho -\bar{\rho} \right\|_{2}^{2}+\left\| J -\bar{J} \right\|_{2}^{2} + \left\|\Phi_{x} -\bar{\Phi}_{x} \right\|^{2}\right) \right|^{m}\right]\notag\\
\ls  & C e^{-\zeta m t}\mathbb{E}\left[\left(\left\| \rho_{0} - \bar{\rho} \right\|_{2}^{2}+\left\| J_{0} -\bar{J}\right\|_{2}^{2}+\left\|\left(\Phi_{0}\right)_{x} -\bar{\Phi}_{x} \right\|^{2}\right)^{m}\right],
\end{align}
holds, where $C$ is independent of $t$, and $C$ is the $m$-th power of some constant;
\item The invariant measure generated by $\frac{1}{T}\int_{0}^{T} \mathcal{L}\left(\rho \right)\times\mathcal{L}\left(J \right)\times\mathcal{L}\left(\Phi \right) \d t$ is exactly the Dirac measure of steady state $\left(\bar{\rho} , \bar{J} , \bar{\Phi} \right)$.
\end{enumerate}
\end{theorem}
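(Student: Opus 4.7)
The plan is to set $(\sigma, j, \tilde e) := (\rho - \bar\rho, J - \bar J, \Phi_x - \bar\Phi_x)$ and work with the perturbation system obtained by subtracting \eqref{steady state for 1-D contact} from \eqref{1-d Euler Poisson}. The perturbation satisfies a stochastic hyperbolic--Poisson system driven by $\mathbb{F}(\rho, J)\,\d W$, with homogeneous Dirichlet data $\sigma|_{\partial U}=0$ and $\phi|_{\partial U}=0$, while $j$ remains free on $\partial U$. Thanks to \eqref{condition for F} and the smallness $|\bar J|\ls \eps$, the noise magnitude satisfies $\sum_k F_k^2 \ls J^2 \ls \eps^2 + j^2$, which is small enough to be absorbed by the dissipation. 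I would split the proof into three stages: (i) local well-posedness by Banach's fixed point theorem, (ii) weighted a priori energy estimates uniform in $t$, upgrading the local solution to a global one and yielding \eqref{small perturbation condition}, and (iii) existence and identification of the invariant measure via Krylov--Bogoliubov.

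\textbf{Local existence and basic energy identity.} For stage (i), I would freeze the drift and diffusion at an iterate $(\sigma^{(n-1)}, j^{(n-1)}, \tilde e^{(n-1)})$ and solve the resulting linear stochastic symmetric-hyperbolic--Poisson system for $(\sigma^{(n)}, j^{(n)}, \tilde e^{(n)})$ in $L^{2m}\bigl(\Omega; C([0,T_0]; H^2)\bigr)$; the Lipschitz structure of $F_k = a_k J\, Y(J)$ with bounded $Y, Y', Y''$ makes this map contractive for small $T_0$. The basic a priori bound comes from applying It\^o's formula to $\mathcal E(t) := \|\sigma\|_2^2 + \|j\|_2^2 + \|\tilde e\|^2$: the relaxation term $-j$ in the momentum equation supplies $\|j\|^2$-dissipation, and the Poisson constraint $\tilde e_x = \sigma$ combined with a standard multiplier transfers this dissipation to $\|\sigma\|^2$ and $\|\tilde e\|^2$; the It\^o correction $\sum_k \|F_k\|^2 \ls \eps^2 + \|j\|^2$ is absorbed once $\eps$ is small.

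\textbf{Weighted $H^2$ estimates -- the main obstacle.} The technical heart of the argument is the higher-order estimate. In the deterministic treatment of \cite{Guo2006StabilityOS} one controls $\|\rho_{xx}\|$ through $\partial_t J$, but this route is closed here because the stochastic forcing leaves $J$ only H\"older-$\tfrac12$ in $t$, so $\partial_t J$ is not available in any pointwise sense. All derivative estimates must therefore be carried out by spatial differentiation of the momentum equation, which generates boundary traces of $j_x$ and $j_{xx}$ at $x = 0, 1$ that do \emph{not} vanish under the Ohmic contact conditions. To neutralize these contributions, I would introduce a weight $\chi(x)$ vanishing suitably at the endpoints and carry out weighted It\^o estimates on $\|\sqrt{\chi}\,\sigma_{xx}\|^2$ and $\|\sqrt{\chi}\, j_{xx}\|^2$; combining these with the lower-order bounds recovers full $H^2$ control. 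Applying It\^o's formula to $\mathcal E(t)^m$ and the Burkholder--Davis--Gundy inequality to the martingale supremum yields the Gr\"onwall-type inequality
\begin{equation*}
\frac{d}{dt}\mathbb{E}\bigl[\mathcal E(t)^m\bigr] + \zeta m\, \mathbb{E}\bigl[\mathcal E(t)^m\bigr] \ls 0,
\end{equation*}
from which the exponential decay \eqref{small perturbation condition} follows at once. The uniform-in-$t$ bound then continues the local solution to a global strong solution.

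\textbf{Invariant measure.} Finally, the uniform moment bounds imply tightness of the family $\{\mathcal L(\rho(t), J(t), \Phi(t))\}_{t\gs 0}$ in $H^2 \times H^2 \times H^4$. Krylov--Bogoliubov's theorem applied to the Ces\`aro averages $\mu_T := \tfrac{1}{T}\int_0^T \mathcal L(\rho(t), J(t), \Phi(t))\,\d t$ then produces an invariant measure as a weak subsequential limit. The exponential decay from stage (ii) gives $\mathbb{E}\bigl[\mathcal E(t)^m\bigr]\to 0$, so by continuity and boundedness of test functions, $\mu_T$ concentrates at $(\bar\rho, \bar J, \bar\Phi)$; this forces the invariant measure to coincide with $\delta_{\bar\rho}\times\delta_{\bar J}\times\delta_{\bar\Phi}$.
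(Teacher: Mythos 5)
Your high-level roadmap (perturbation system, Banach fixed point, a priori estimates, Krylov--Bogoliubov) matches the paper, but two of your central technical proposals do not survive scrutiny and one of them is precisely the obstacle the paper spends most of its effort overcoming.

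\textbf{The spatial weight for the boundary terms.} You propose to kill the boundary traces of $j_{xx}$ by estimating $\|\sqrt{\chi}\,\sigma_{xx}\|^2$ and $\|\sqrt{\chi}\,j_{xx}\|^2$ with a cutoff $\chi$ vanishing at $x=0,1$, and then claim that "combining with the lower-order bounds recovers full $H^2$ control." This does not close. A degenerate weight yields no control of $\sigma_{xx}$, $j_{xx}$ precisely on the strips near the endpoints where $\chi$ is small, and the lower-order bounds cannot regain that: there is no elliptic regularity argument here turning $\|\sigma\|_{H^1}+\|\sqrt{\chi}\,\sigma_{xx}\|$ into $\|\sigma_{xx}\|$ up to the boundary without using the boundary behaviour of $\sigma_{xx}$ itself, and $\sigma_{xx}=j_{xxt}$ is exactly the quantity your weight discarded. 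The paper's route is fundamentally different: it shows the boundary term $\left.\tilde{\mathcal{A}}\w_{xx}\cdot\w_{xx}\right|_{0}^{1}$ is \emph{exactly zero}, by writing $\w_{xx}$ at $x=0,1$ via the equation and exploiting the facts $\sigma|_{\partial U}=0$, $j_x|_{\partial U}=0$ (the latter from $\sigma_t+j_x=0$), and comparing two representations of the same trace. Only because this term vanishes can one obtain unweighted $H^2$ dissipation. Note also that the paper has to change the symmetrizing matrix between the first- and second-order estimates ($\mathcal{D}$ versus $\tilde{\mathcal{D}}$), which is another ingredient your sketch omits.

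\textbf{The Gr\"onwall differential inequality.} You write that It\^o plus Burkholder--Davis--Gundy yield
\begin{equation*}
\frac{d}{dt}\mathbb{E}\bigl[\mathcal{E}(t)^m\bigr]+\zeta m\,\mathbb{E}\bigl[\mathcal{E}(t)^m\bigr]\ls 0.
\end{equation*}
This is not obtainable from BDG, which bounds $\mathbb{E}\bigl[\sup_{s\le t}|\int_0^s\Psi\,dW|^{m}\bigr]$ by $\mathbb{E}\bigl[(\int_0^t\|\Psi\|^2\,ds)^{m/2}\bigr]$, i.e.\ a time-\emph{integral} estimate, not a pointwise-in-$t$ ODE. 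The paper explicitly singles this out as the main new difficulty compared with the deterministic case (see the discussion of challenge (3) in the introduction and the remark at the start of Section 4: ``the {\it a priori} estimates are already in the form of time integrals rather than a differential inequality, owing to Burkholder--Davis--Gundy's inequality''). Its remedy is to multiply the It\^o identity by the scalar weight $e^{\zeta t}$ \emph{before} taking $m$-th moments and applying BDG, estimate the resulting weighted stochastic integrals against $e^{\zeta m t}\sup_s\|\w\|_2^{2m}$ plus time integrals of weighted lower-order quantities, absorb the cubic and $|\bar J|$-prefactored terms by smallness, and then divide by $e^{\zeta m t}$. Your phrasing suggests deriving the decay first and the $\sup$-estimate afterward; in fact the $\sup$ must be handled simultaneously inside BDG, and that is exactly why the weighted-integral route is needed.

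\textbf{Minor.} Your lowest-order argument is described as ``apply It\^o to $\mathcal{E}(t)$'' — for the Ohmic case the paper instead uses a nonlinear entropy functional $\frac{J^2}{2\rho}+G(\rho)+\frac{\Phi_x^2}{2}$ and a Taylor/error analysis ($\mathcal{G}=\mathcal{F}\cdot J/\rho$) to extract the dissipation, noting that the symmetrizing method used for the insulating boundary does not directly carry over. That is worth flagging, though less crucial than the two points above.
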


\begin{remark}
After $t\ra \infty$ in \eqref{1-d Euler Poisson}, the stationary solution coincides with the steady state $\mathbb{P}$ {\rm a.s.}, on account that the $m$-th moment of their difference tends to zero.
\end{remark}

\begin{remark}
For
\begin{align}
\left(\left\|\rho_{0} - \bar{\rho} \right\|_{2}^{2}+\left\| J_{0} -\bar{J} \right\|_{2}^{2}+\left\|E_{0} -\bar{E} \right\|^{2}\right)\ls \eps^{2},
\end{align}
 recalling $E=\Phi_{x}$, we have the $\mathbb{P}$~{\rm a.s} asymptotic stability for some constant $\tilde{C}$:
\begin{align}
 \sup\limits_{s\in[0,t]}\left(\left\| \rho -\bar{\rho} \right\|_{2}^{2}+\left\| J -\bar{J}\right\|_{2}^{2}+\left\|E -\bar{E} \right\|^{2}\right)
\ls   2\tilde{C} e^{-\alpha t} \eps^{2}.
\end{align}
In fact, by Chebyshev's inequality (see Appendix \ref{append}), it holds that
\begin{align}
&\mathbb{P}\left[\left\{\omega\in \Omega|  \left\| \rho -\bar{\rho} \right\|_{2}^{2}+\left\| J -\bar{J}\right\|_{2}^{2}+\left\|E -\bar{E} \right\|^{2}
>  2\tilde{C} e^{-\alpha t} \eps^{2}\right\}\right]\notag\\
\ls &\frac{\mathbb{E}\left[\left|\left\| \rho -\bar{\rho} \right\|_{2}^{2}+\left\| J -\bar{J}\right\|_{2}^{2}+\left\|E -\bar{E} \right\|^{2}\right|^{m}\right]}{\left(2\tilde{C} e^{-\alpha t} \eps^{2}\right)^{m}}\\
\ls &\frac{\mathbb{E}\left[\tilde{C}e^{-\alpha t}\left(\left\| \rho_{0} - \bar{\rho} \right\|_{2}^{2}+\left\|J_{0} -\bar{J}\right\|_{2}^{2}+ \left\|E_{0} -\bar{E} \right\|^{2}\right)^{m}\right]}{\left(2\tilde{C} e^{-\alpha t} \eps^{2}\right)^{m}}=\frac{1}{2^{m}}.\notag
\end{align}
Sending $m\ra \infty$, we have
$$\mathbb{P}\left[\left\{\omega\in \Omega| \left\| \rho -\bar{\rho} \right\|_{2}^{2} + \left\|J -\bar{J}\right\|_{2}^{2}+\left\|E -\bar{E} \right\|^{2}>2\tilde{C} e^{-\alpha t} \eps^{2}\right\}\right]\ra 0,$$
which means that\\
 $\left\|\rho -\bar{\rho} \right\|_{2}^{2}+\left\|J -\bar{J}\right\|_{2}^{2}+\left\|E -\bar{E} \right\|^{2}\ls 2\tilde{C} e^{-\alpha t} \eps^{2}$ holds $\mathbb{P}$ {\rm a.s.} for every $s\in[0, t]$.
\end{remark}


The challenges and their corresponding strategies are analyzed as follows.
\begin{enumerate}
\item \textbf{The a priori estimates with the contact boundary condition.} For the contact boundary condition, the second-order estimates of perturbed solutions involve the boundary of $\left[\begin{array}{l}\sigma \\ j\end{array}\right]_{xx}\cdot \left[\begin{array}{l}\sigma \\ j\end{array}\right]_{xx}$, which can not be directly dealt by the contact boundary condition as presented in previous works such as \cite{Guo2006StabilityOS,MeiWu2021Stability}. We solve this problem by substituting $\left[\begin{array}{l}\sigma \\ j\end{array}\right]_{xx}$ with its formula in the first-order derivative of the system. Subsequently, we focus on estimating other lower-order derivatives in the first-order derivative of the system. Further, for the second-order estimates, the symmetrzing matrix in the first-order estimates is not applicable. We use another symmetrizing matrix to multiply the system, enabling us to handle the integral of $\left[\begin{array}{l}\sigma \\ j\end{array}\right]_{xxx}\cdot \left[\begin{array}{l}\sigma \\ j\end{array}\right]_{xx}$. For the zeroth-order estimates, due to the Ohmic boundary conditions, we use the method of error analysis compared to the symmetrizing method for the insulating boundary case.

\item \textbf{No temporal solutions due to the stochastic term.} Since the Wiener process is at most H\"oder-$\frac{1}{2}-$ continuous with respect to $t$ and nowhere differentiable, we lack $\frac{\partial W}{\partial t}$ or $\frac{\partial (\rho u)}{\partial t}$. The temporal derivatives are not involved in the norm of solutions. Therefore, the estimates bounded by the norm of the temporal derivatives, which are applicable in the deterministic cases \cite{Guo2006StabilityOS,MeiWu2021Stability}, do not hold in this case. As a result, different energy estimates are necessary in this paper. For instance, for the second-order estimates, we use another symmetrizing matrix to multiply the system, enabling us to handle the integral of $\left[\begin{array}{l}\sigma \\ j\end{array}\right]_{xxx}\cdot \left[\begin{array}{l}\sigma \\ j\end{array}\right]_{xx}$ compared to the first-order estimates.

\item \textbf{Weighted energy estimates on account of the estimates of the stochastic integral.} In the deterministic case, it is common practice to multiply the ordinary differential equation of solutions by an exponential function of $t$ to facilitate stability analysis. However, in this paper, the {\it a priori} estimates are already in the form of time integrals rather than a differential inequality, owing to Burkholder-Davis-Gundy's inequality. Consequently, direct acquisition of asymptotic stability becomes challenging. To overcome this obstacle, we employ the weighted energy estimates. Furthermore, besides the role of ensuring the global existence, the {\it a priori} estimates play a crucial role in the weighted energy estimates.
\end{enumerate}

The paper is structured as follows. In \S 2, we demonstrate the {\it a priori} estimates up to second-order. In \S 3, we establish the global existence of solutions around the steady state. In \S 4, we investigate the asymptotic stability of solutions. \S 5 is about the global existence of invariant measures. \S 6 is the Appendix, in which we outline some stochastic analysis theories utilized in this study.
\smallskip
\smallskip

\section{The a priori estimates up to second-order}

In terms of $\left(\sigma, j, \tilde{e}\right)$, the perturbed Euler-Poisson system becomes
\begin{align}
\left\{\begin{array}{l}\label{1-D sto semiconductor around teady state}
\sigma_{t}+ j_{x}=0,\\
\d j + \left(\left(\frac{\left(\bar{J}+j\right)^{2}}{\bar{\rho}+\sigma}-\frac{\bar{J}^{2}}{\bar{\rho}}+P\left(\bar{\rho}+\sigma\right)-P\left(\bar{\rho}\right)\right)_{x}
  +j-\bar{\rho}\tilde{e}-\sigma \bar{\Phi}_{x}\right)\d t =  \sigma \tilde{e}\d t + \mathbb{F}\d W,\\
\tilde{e}_{x}=\sigma.
 \end{array}\right.
\end{align}
The compatible conditions are $\left.\rho_{1}\right|_{t=0}=\bar{\rho}(0), \quad \left.\rho_{2}\right|_{t=0}=\bar{\rho}(1)$. Since $\sigma_{t}+j_{x}=0$, the boundary conditions for the triple of perturbations $\left(\sigma,\u,\phi\right)$ are
\begin{align}\label{boundary condtion for perturbations}
\sigma (t,0)\equiv\sigma (t,1)\equiv j_{x}(t,0)\equiv j_{x} (t,1)\equiv\phi(t,0)\equiv\phi(t,1)\equiv0.
\end{align}
Furthermore, from $\left(\tilde{e}_{t}+j\right)_{x}=\sigma_{t}+j_{x}=0$, as in \cite{Guo2006StabilityOS} we still have
\begin{align}
\tilde{e}_{t}=-j+\int_{0}^{1} j \d x.
\end{align}
Denoting $\w=\left[\begin{array}{l}\sigma \\ j\end{array}\right]$, we write the system as
\begin{align}\label{1-d system with contact boundary}
\d  \w+\left(\mathcal{A}\w_{x}+\mathcal{B}\w + \mathcal{C}\right) \d t= \mathcal{N}\d t+ \left[\begin{array}{c}0\\\mathbb{F}\d  W\end{array}\right],
\end{align}
where
\begin{align}\label{mathcal A}
\mathcal{A}=\left[\begin{array}{cc}
0 & 1 \\
  -\left(\frac{\bar{J}+j}{\bar{\rho}+\sigma}\right)^{2}+P'\left(\bar{\rho}+\sigma\right) & 2\frac{\bar{J}+j}{\bar{\rho}+\sigma}
\end{array}\right],
\end{align}
\begin{align}\label{mathcal B}
\mathcal{B}=\left[\begin{array}{cc}
0 & 0 \\
\frac{-2\bar{J}^{2}}{\bar{\rho}^{3}}\bar{\rho}_{x} + P''\left(\bar{\rho}\right)\bar{\rho}_{x}-\bar{E} & \frac{-2\bar{J}}{\bar{\rho}^{2}}\bar{\rho}_{x}+1
\end{array}\right],
\end{align}
\begin{align}\label{mathcal C}
\mathcal{C}=\left[\begin{array}{c}
0 \\
-\bar{\rho}\tilde{e}
\end{array}\right],
\end{align}
\begin{align}\label{mathcal N}
\mathcal{N}=\left[\begin{array}{c}0\\O\left(\left|\w\right|^{2}+\left|\tilde{e}\right|^{2}+\left|\bar{J}\right|\left|\w\right|\right)\end{array}\right].
\end{align}
Under the assumption that
\begin{align}
 \left\|\w\right\|_{2}^{2}(t)+\left\|\tilde{e}\right\|^2
\end{align}
is small, we obtain the {\it a priori} estimates. Motivated by \cite{Guo2006StabilityOS}, we perform the zeroth-order estimates through error analysis. Furthermore, the symmetrizing matrix for the second-order derivative of the system differs from that for the first-order, serving different purposes.
In the subsequent subsections, we provide the {\it a priori} estimates up to second-order.

\subsection{Zeroth-order estimates on $\w$}\label{Zeroth-order estimates on w}

We calculate $\d  \left(\frac{J^{2}}{2\rho}\right)$ by It\^o's formula (see Appendix \ref{append}), taking $X=\frac{J}{2\rho}$, $Y=J$ in \eqref{Ito for XY},
\begin{align}\label{zero order balance}
& \d  \left(\frac{J^{2}}{2\rho}\right)=J\d \left(\frac{J}{2\rho}\right)+\frac{J}{2\rho}\d J+\langle \d \frac{J}{2\rho}, \d J\rangle\notag\\
=&\frac{J}{\rho}\d  J + \frac{-J^{2}}{2\rho^{2}}\d  \rho + \frac{1}{2\rho}\left\langle\d  J,\d  J\right\rangle \\
=& \frac{J}{\rho}\left(-\left(\frac{J^{2}}{\rho}+P(\rho)\right)_{x}-J+ \rho \Phi_{x}\right)\d t+\frac{J^{2}}{2\rho^{2}} J_{x} \d t + \frac{J}{\rho}\mathbb{F}\d  W  + \frac{1}{2\rho} \left|\mathbb{F}\right|^{2}\d t,
\end{align}
in which we used the property of cross quadratic variations $\langle \d \rho, \d J\rangle =0$ and $\frac{1}{2\rho}\left\langle\d  J,\d  J\right\rangle =\frac{1}{2\rho} \left|\mathbb{F}\right|^{2}\d t$.
We calculate
\begin{align}
\frac{J}{\rho}\left(-\frac{J^{2}}{\rho}\right)_{x} = \frac{J^{3}}{\rho^{3}}\rho_{x}-2\frac{J^{2}}{\rho^{2}}J_{x}.
\end{align}
Setting $G''\left(\rho\right)=\frac{ P'\left(\rho\right)}{\rho}$, we have
\begin{align}
&P\left(\rho\right)_{x}\frac{J}{\rho} = P'\left(\rho\right)\rho_{x}\frac{J}{\rho}
=\frac{ P'\left(\rho\right)}{\rho}\rho_{x}J=\left(G'\left(\rho\right)\right)_{x}J \\
=&-G'\left(\rho\right)J_{x}+ \left(G'\left(\rho\right)J\right)_{x}=G\left(\rho\right)_{t}+\left(G'\left(\rho\right)J\right)_{x}.\notag
\end{align}
Since $\tilde{e}_{t}=-j+\int_{0}^{1}j\d x$ and $\bar{J}$ being a constant, there holds
\begin{align}
\tilde{e}_{t}=-J+\int_{0}^{1}J\d x,\\
J=-\tilde{e}_{t}+\int_{0}^{1}J\d x.
\end{align}
Together with $\tilde{e}=\Phi_{x}-\bar{\Phi}_{x}=\phi_{x}$, it holds that
\begin{align}
\frac{J}{\rho}\rho\Phi_{x}=J\Phi_{x}=\Phi_{x}\left(-E_{t}+\int_{0}^{1}J\d x\right)=-\Phi_{x}\Phi_{xt} + \Phi_{x}\int_{0}^{1}J\d x.
\end{align}
Combining the above terms, we obtain
\begin{align}\label{equation of mathcal F}
&\d  \left(\frac{J^{2}}{2\rho}+G\left(\rho\right)+\frac{\Phi_{x}^{2}}{2}\right)+\left(\left(\frac{J^{3}}{2\rho^{2}}+G'\left(\rho\right)J\right)_{x}-\Phi_{x}\int_{0}^{1}J\d x+\frac{J^{2}}{\rho}\right)\d t \\
=& \frac{J\mathbb{F}}{\rho}\d  W + \frac{1}{2\rho}\left|\mathbb{F}\right|^{2}\d t. \notag
\end{align}
We denote the equation \eqref{equation of mathcal F} as
\begin{align}
\mathcal{G}\left(\rho, J, \Phi_{x}\right)=&\d  \left(\frac{J^{2}}{2\rho}+G\left(\rho\right)+\frac{\Phi_{x}^{2}}{2}\right)+\left(\left(\frac{J^{3}}{2\rho^{2}}+G'\left(\rho\right)J\right)_{x}-\Phi_{x}\int_{0}^{1}J\d x+\frac{J^{2}}{\rho}\right)\d t\\
& -\frac{J\mathbb{F}}{\rho}\d  W-\frac{1}{2\rho}\left|\mathbb{F}\right|^{2}\d t. \notag
\end{align}
From the process of deriving the expression of $\mathcal{G}$, we notice that
\begin{align}
&\mathcal{G}\left(\rho, J, \Phi_{x}\right) = \mathcal{F}\left(\rho, J, \Phi_{x}\right)\frac{J}{\rho},
\end{align}
where $\mathcal{F}$ satisfies
\begin{align}
&\mathcal{F}\left(\rho, J, \Phi_{x}\right)=\d  J + \frac{-J}{2\rho}\d \rho
- \left(-\left(\frac{J^{2}}{\rho}+P(\rho)\right)_{x}-J+ \rho \Phi_{x}\right)\d t - \mathbb{F}\d  W -\frac{J}{2\rho} J_{x} \d t\notag\\
&\qquad \qquad \quad =0,\\
&\mathcal{F}\left(\bar{\rho}, \bar{J}, \bar{\Phi}_{x}\right)=-\mathbb{F}\left(\bar{J}\right)\d  W.
\end{align}
Let $\hat{\w}=\bar{\w}+\eps \w$ be the solution of the continuity equation, where $\bar{\w}=\left(\bar{\rho},\bar{J}, \bar{\Phi}\right)$. It also holds
\begin{align}
&\mathcal{G}\left(\hat{\w}, \bar{\Phi}_{x}+\eps \phi_{x}\right) = \mathcal{F}\left(\hat{\w}, \bar{\Phi}_{x}+\eps \phi_{x}\right)\frac{\bar{J}+\eps j}{\bar{\rho}+\eps \sigma},
\end{align}
since we just use the mass equation and Poisson equation to do the substitution in the above deformative equalities.
We denote
\begin{align}
\mathscr{E}\left(\hat{\w},\bar{\Phi}_{x}+\eps \phi_{x}\right)=&\frac{\left(\bar{J}+\eps j\right)^{2}}{2\left(\bar{\rho}+\eps\rho\right)}+G\left(\bar{\rho}+\eps \rho\right)+\frac{\left(\bar{\Phi}_{x}+\eps \phi_{x}\right)^{2}}{2}, \\
\mathscr{D}\left(\hat{\w}\right)=&\frac{\left(\bar{J}+\eps j\right)^{3}}{2\left(\bar{\rho}+\eps\rho\right)^{2}}+G'\left(\bar{\rho}+\eps\rho\right)\left(\bar{J}+\eps j\right),\\
\mathscr{R}\left(\hat{\w},\bar{\Phi}_{x}+\eps \phi_{x}\right)=&\left(-\left(\bar{\Phi}+\eps \phi\right)_{x}\int_{0}^{1}\left(\bar{J}+\eps j\right)\d x+\frac{\left(\bar{J}+ \eps j\right)^{2}}{\bar{\rho}+\eps \rho}\right).
\end{align}
Then we have
\begin{align}
\mathcal{G}\left(\hat{\w},\bar{\Phi}_{x}+\phi_{x}\right)
= & \d  \mathscr{E}\left(\hat{\w},\bar{\Phi}+\eps \phi\right)+\mathscr{D}_{x}\left(\hat{\w}\right)\d t+\mathscr{R}\left(\hat{\w},\bar{\Phi}_{x}+\eps \phi_{x}\right)\d t\\
  &-\frac{1}{2\left(\bar{\rho}+\eps \rho\right)}\left|\mathbb{F}\left(\bar{J}+\eps j\right)\right|^{2}\d t-\frac{\left(\bar{J}+\eps j\right)\mathbb{F}\left(\bar{J}+\eps j\right)}{\left(\bar{\rho}+\eps \rho\right)}\d  W,\notag
\end{align}
and
\begin{align}
&\mathcal{G}\left(\bar{\w},\bar{\Phi}_{x}\right)\\
=&\d  \mathscr{E}\left(\bar{\w},\bar{\Phi}_{x}\right)+\mathscr{D}_{x}\left(\bar{\w}\right)\d t+\mathscr{R}_{x}\left(\bar{\w},\bar{\Phi}_{x}\right)\d t-\frac{1}{2\bar{\rho}}\left|\mathbb{F}\left(\bar{J}\right)\right|^{2}\d t-\frac{\bar{J}\mathbb{F}\left(\bar{J}\right)}{\bar{\rho}}\d  W. \notag
\end{align}
By Taylor's expansion, it holds that
\begin{align}
&\mathcal{G}\left(\bar{\w}+\w,\bar{\Phi}_{x}+ \phi_{x}\right)\\
=&\mathcal{G}\left(\bar{\w},\bar{\Phi}_{x}\right)+\mathcal{G}_{\eps}'\left(\bar{\w},\bar{\Phi}_{x}\right)\cdot\left[\begin{array}{c}\w\\ \phi_{x}\end{array}\right]+ \frac{1}{2}\mathcal{G}_{\eps\eps}''\left(\bar{\w},\bar{\Phi}_{x}\right)\left[\begin{array}{c}\w\\ \phi_{x}\end{array}\right]\cdot\left[\begin{array}{c}\w\\ \phi_{x}\end{array}\right]+O\left(|\w|^{3}+\left|\phi_{x}\right|^{3}\right).\notag
\end{align}
There holds
\begin{align}
&\left.\frac{\d^{2} }{\d \eps^{2}}\left(\mathcal{G}\left(\hat{\w},\bar{\Phi}_{x}+\eps \phi_{x}\right)\right)\right|_{\eps=0}= \left.\frac{\d^{2} }{\d \eps^{2}}\left(\mathcal{F}\left(\hat{\w},\bar{\Phi}_{x}+\eps \phi_{x}\right)\frac{\hat{J}}{\hat{\rho}}\right)\right|_{\eps=0}\\
=&\mathcal{F}''\left(\bar{\w},\bar{\Phi}_{x}\right) \left[\begin{array}{c}\w\\ \phi_{x}\end{array}\right]\left(\w\quad \phi_{x}\right)
\frac{\bar{J}}{\bar{\rho}}+2\left[\begin{array}{c}\w\\ \phi_{x}\end{array}\right] \mathcal{F}'\left(\bar{\w},\bar{\Phi}_{x}\right) \left(\frac{j}{\bar{\rho}}-\frac{\bar{J}\sigma}{\bar{\rho}^{2}}\right)+\mathcal{F}\left(\bar{\w},\bar{\Phi}_{x}\right)O\left(\left|\w\right|^{2}\right),\notag
\end{align}
By Taylor's expansion, it holds that
\begin{align}
&-\mathcal{F}'\left(\bar{\w},\bar{\Phi}_{x}\right)\left[\begin{array}{c}\w\\ \phi_{x}\end{array}\right] =\mathcal{F}\left(\bar{\w}+\w,\bar{\Phi}_{x}+ \phi_{x}\right)-\mathcal{F}\left(\bar{\w},\bar{\Phi}_{x}\right)-\mathcal{F}'\left(\bar{\w},\bar{\Phi}_{x}\right)\cdot \left[\begin{array}{c}\w\\ \phi_{x}\end{array}\right]\\
 = &\mathcal{F}\left(\bar{\w}, \bar{\Phi}_{x}\right)+ \frac{1}{2}\left(\w\quad \phi_{x}\right)\mathcal{F}''\left(\tilde{\w},\tilde{\phi}_{x}\right)\left[\begin{array}{c}\w\\ \phi_{x}\end{array}\right] ,\notag
\end{align}
where $\left(\tilde{\w},\tilde{\phi}_{x}\right)$ lies between $\left(\bar{\w},\bar{\Phi}_{x}\right)$ and $\left(\bar{\w}+\w,\bar{\Phi}_{x}+ \phi_{x}\right)$.
The error terms
\begin{align}
\left| \int_{0}^{1}\left[\begin{array}{c}\w\\ \phi_{x}\end{array}\right]\mathcal{F}''\left(\bar{\w},\bar{\Phi}_{x}\right)\left[\w\quad \phi_{x}\right]
\frac{\bar{J}}{\bar{\rho}}\left[\begin{array}{c}\w\\ \phi_{x}\end{array}\right]\cdot\left[\begin{array}{c}\w\\ \phi_{x}\end{array}\right]\d x\right| \ls C \left\|\w\right\|_{2}^{4}\ls C \left\|\w\right\|_{2}^{3},
\end{align}
\begin{align}
&\left| 2\left[\begin{array}{c}\w\\ \phi_{x}\end{array}\right] \mathcal{F}'\left(\bar{\w},\bar{\Phi}_{x}\right) \left(\frac{j}{\bar{\rho}}-\frac{\bar{J}\sigma}{\bar{\rho}^{2}}\right)\left[\begin{array}{c}\w\\ \phi_{x}\end{array}\right]\cdot\left[\begin{array}{c}\w\\\phi_{x}\end{array}\right]\right|\\
= & \left|\left(\mathcal{F}\left(\bar{\w}, \bar{\Phi}_{x}\right) + \frac{1}{2}\left[\w\quad \phi_{x}\right]\mathcal{F}''\left(\tilde{\w},\tilde{\phi}_{x}\right)\left[\begin{array}{c}\w\\ \phi_{x}\end{array}\right]\right) \left(\frac{j}{\bar{\rho}}-\frac{\bar{J}\sigma}{\bar{\rho}^{2}}\right)\left[\begin{array}{c}\w\\ \phi_{x}\end{array}\right]\cdot\left[\begin{array}{c}\w \\ \phi_{x}\end{array}\right]\right|= O\left(\left|\w\right|^{3}\right), \notag
\end{align}
and
\begin{align}
&\left| O\left(\left|\w\right|^{2}\right)\mathcal{F}\left(\bar{\w},\bar{\Phi}_{x}\right)\d W \left[\begin{array}{c}\w\\ \phi_{x}\end{array}\right]\cdot\left[\begin{array}{c}\w\\ \phi_{x}\end{array}\right]\right| = O\left(\left|\w\right|^{4}\right)\d W,
\end{align}
are small for $\left\|\w\right\|_{2}$ small enough. 
We calculate term by term:
\begin{align}
\d \left(\frac{\left(\bar{J}+j\right)^{2}}{2\left(\bar{\rho}+\sigma\right)}-\left(\frac{\bar{J}^{2}}{2\bar{\rho}}+\frac{\bar{J}}{\bar{\rho}}j-\frac{\bar{J}^{2}}{2\bar{\rho}^{2}}\sigma\right)\right)
= \d \left(\frac{\left(\bar{\rho}j-\bar{J}\sigma\right)^{2}}{2\left(\bar{\rho}+\sigma\right)\bar{\rho}^{2}}\right),
\end{align}
\begin{align}
\d \left(\frac{\left(\bar{\Phi}_{x}+\phi_{x}\right)^{2}}{2}-\left(\frac{\bar{\Phi}_{x}^{2}}{2}-\bar{\Phi}_{x}\phi_{x}\right)\right)
= \d \left(\frac{\phi_{x}^{2}}{2}\right),
\end{align}
\begin{align}
 &\left(\frac{\left(\bar{J}+j\right)^{3}}{2\left(\bar{\rho}+\sigma\right)^{2}}-\left(\frac{\bar{J}^{3}}{2\left(\bar{\rho}\right)^{2}}
 -\frac{\bar{J}^{3}}{\bar{\rho}^{3}}\sigma+\frac{3\bar{J}^{2}}{2\bar{\rho}^{2}}j\right)\right)_{x}\\
=&\left(\frac{3\bar{J}j^{2}}{\bar{\rho}^{2}}-\frac{3\bar{J}^{2}j\sigma}{\bar{\rho}^{3}}+\frac{3 \bar{J}^{2} \sigma^{2}}{\bar{\rho}^{4}}+O\left(\left(\left|j\right|+|\sigma|\right)^{3}\right)\right)_{x},\notag
\end{align}
\begin{align}
G'\left(\rho\right)J-G'\left(\bar{\rho}\right)\bar{J}-G''\left(\bar{\rho}\right)\bar{J}\sigma-G'\left(\bar{\rho}\right)j=O\left(\left|\bar{J}\right|\left| \w \right|^{2} \right),
\end{align}
\begin{align}
& \left(\bar{\Phi}_{x}+\phi_{x}\right)\int_{0}^{1}\left(\bar{J}+j\right) \d x-\left(\bar{\Phi}_{x}\right)\int_{0}^{1}\bar{J} \d x-\phi_{x}\int_{0}^{1}\bar{J}\d x-\bar{\Phi}_{x}\int_{0}^{1}j \d x\\
=& 2\phi_{x}\int_{0}^{1}j \d x \left|\phi_{x}\right|^{2}+ o\left(\left|\phi_{x}\right|^{3}\right)= O\left(\left|\phi_{x}\right|^{3}\right)=O\left(\left|\tilde{e}\right|^{3}\right),\notag
\end{align}

\begin{align}
& \frac{J \mathbb{F}\d  W}{\rho}-\frac{\bar{J}\mathbb{F}\left(\bar{J}\right)\d  W}{\bar{\rho}}-\frac{j\mathbb{F}\left(\bar{J}\right)\d  W}{\bar{\rho}}-\frac{\bar{J}j\mathbb{F}'\left(\bar{J}\right)\d  W}{\bar{\rho}}+\frac{\bar{J}\mathbb{F}\left(\bar{J}\right)\sigma\d  W}{\bar{\rho}^{2}}\\
=& -\left(\frac{2\mathbb{F}'\left(\bar{J}\right)}{\bar{\rho}}+\frac{\bar{J}\mathbb{F}''\left(\bar{J}\right)}{\bar{\rho}}\right)\frac{1}{2}j^{2}\d  W-2\frac{\bar{J}\mathbb{F}\left(\bar{J}\right)}{\bar{\rho}^{3}}\frac{1}{2}\sigma^{2}\d  W
-\frac{\bar{J}\mathbb{F}'\left(\bar{J}\right)+\mathbb{F}\left(\bar{J}\right)}{\bar{\rho}^{2}}j\sigma\d  W \notag\\
&+ O\left(\left(\left|j\right|^{3}+\left|\sigma\right|^{3}\right)\d  W\right),\notag
\end{align}
and
\begin{align}
  & \frac{1}{2\rho}\left|\mathbb{F}\right|^{2}\d t-\frac{1}{2\bar{\rho}}\left|\mathbb{F}\left(\bar{J}\right)\right|^{2}\d t- \frac{\mathbb{F}\left(\bar{J}\right)\mathbb{F}'\left(\bar{J}\right)}{\bar{\rho}}j\d t-\left((-1)\frac{\left|\mathbb{F}\left(\bar{J}\right)\right|^{2}}{\bar{\rho}^{2}}\right)\sigma\d t \notag\\
= &\frac{\mathbb{F}'\left(\bar{J}\right)^{2}+\mathbb{F}\left(\bar{J}\right)\mathbb{F}''\left(\bar{J}\right) }{2\bar{\rho}}j^{2} \d t- \frac{\mathbb{F}\left(\bar{J}\right)\mathbb{F}''\left(\bar{J}\right)}{\bar{\rho}^{2}} j\sigma \d t+ 2\frac{\left|\mathbb{F}\left(\bar{J}\right)\right|^{2}}{\bar{\rho}^{3}}\sigma^{2}\d t\\
&+O\left(\left(\left|j\right|^{3}+\left|\sigma\right|^{3}\right)\left|\bar{J}\right|\right)\d t. \notag
\end{align}
Combining the above terms and integrating over $x$, noticing that the two ways to describe the errors should be equivalent, we obtain
\begin{align}\label{equality for 1-D zeroth-order}
& \d \int_{0}^{1}\left(\frac{\left(\bar{\rho}j-\bar{J}\sigma\right)^{2}}{2\left(\bar{\rho}+\sigma\right)\bar{\rho}^{2}}+G\left(\bar{\rho}+\sigma\right)
 -G\left(\bar{\rho}\right)-G'\left(\bar{\rho}\right)\sigma+\frac{\tilde{e}^{2}}{2}\right)\d x \notag\\
& + \int_{0}^{1} \frac{\left(\bar{\rho}j-\bar{J}\sigma\right)^{2}}{2\left(\bar{\rho}+\sigma\right)\bar{\rho}^{2}} \d x \d t +\left.\left(\frac{3\bar{J}j^2}{\bar{\rho}^{2}}+O\left(\left(\left|j\right|^{3}+\left|\sigma\right|^{3}\right)\right)\right) \right|_{0}^{1}\d t + O\left(\left|\bar{J}\right|\left\|\w\right\|_{2}^{2} \right)\d t \notag\\
&+\int_{0}^{1}\frac{\mathbb{F}'\left(\bar{J}\right)^{2}+\mathbb{F}\left(\bar{J}\right)\mathbb{F}''\left(\bar{J}\right) }{2\bar{\rho}}j^{2} \d x \d t -\int_{0}^{1}\frac{\mathbb{F}\left(\bar{J}\right)\mathbb{F}''\left(\bar{J}\right)}{\bar{\rho}^{2}} j\sigma \d x \d t\\
& - \int_{0}^{1}2\frac{\left|\mathbb{F}\left(\bar{J}\right)\right|^{2}}{\bar{\rho}^{3}}\sigma^{2} \d x \d t + \int_{0}^{1}O\left(\left(\left|j\right|^{3}+\left|\sigma\right|^{3}\right)\left|\bar{J}\right|\right)\d x \d t+O\left(\left\|\w\right\|_{2}^{3}\right)\d t\notag\\
&+\int_{0}^{1}\left(\frac{2\mathbb{F}'\left(\bar{J}\right)}{\bar{\rho}}+\frac{\bar{J}\mathbb{F}''\left(\bar{J}\right)}{\bar{\rho}}\right)\frac{1}{2}j^{2}\d x \d  W+\int_{0}^{1}\frac{\bar{J}\mathbb{F}\left(\bar{J}\right)}{\bar{\rho}^{3}}\sigma^{2}\d x \d  W\notag\\
&+\frac{\bar{J}\mathbb{F}'\left(\bar{J}\right)+\mathbb{F}\left(\bar{J}\right)}{\bar{\rho}^{2}}j\sigma\d  W  +\int_{0}^{1} O\left(\left(\left|j\right|^{3}+\left|\sigma\right|^{3}\right)\right)\d x\d  W =0. \notag
\end{align}
We estimate the stochastic integral $\int_{0}^{t}\int_{0}^{1}\left(\frac{2\mathbb{F}'\left(\bar{J}\right)}{\bar{\rho}}+\frac{\bar{J}\mathbb{F}''\left(\bar{J}\right)}{\bar{\rho}}\right)\frac{1}{2}j^{2}\d x\d  W $ by Burkh\"older-Davis-Gundy's inequality (see Appendix \ref{append}):
\begin{align}
&\mathbb{E}\left[\left|\int_{0}^{t}\int_{0}^{1}\left(\frac{2\mathbb{F}'\left(\bar{J}\right)}{\bar{\rho}}+\frac{\bar{J}\mathbb{F}''\left(\bar{J}\right)}{\bar{\rho}}\right)\frac{1}{2}j^{2}\d x\d  W \right|^{m}\right]\notag\\
\ls & C \mathbb{E}\left[\left(\int_{0}^{t}\left|\int_{0}^{1}\left(\frac{2\mathbb{F}'\left(\bar{J}\right)}{\bar{\rho}}+\frac{\bar{J}\mathbb{F}''\left(\bar{J}\right)}{\bar{\rho}}\right)\frac{1}{2}j^{2}\d x\right|^{2}\d s\right)^{\frac{m}{2}}\right]\notag\\
\ls &C \mathbb{E}\left[\left(\int_{0}^{t}\left|\bar{J}\right|^{2}\left\| j \right\|_{2}^{4} \d s\right)^{\frac{m}{2}}\right] \\
\ls & C \mathbb{E}\left[\left(\sup_{s\in [0,t]}\left|\bar{J}\right| \left\| j \right\|_{2}^{2} \right)^{\frac{m}{2}}\left(\int_{0}^{t}\left|\bar{J}\right|\left\| j \right\|_{2}^{2} \d s\right)^{\frac{m}{2}}\right]\notag\\
\ls & \vartheta_{1} \mathbb{E}\left[\left(\sup_{s\in [0,t]}\left|\bar{J}\right| \left\| j \right\|_{2}^{2} \right)^{m}\right]+C_{\vartheta_{1}} \mathbb{E}\left[\left(\int_{0}^{t}\left|\bar{J}\right|\left\| j \right\|_{2}^{2} \d s\right)^{m}\right], \notag
\end{align}
where $C$ is a $m$-th power of some constant.
Similarly, we have
\begin{align}
& \mathbb{E}\left[\left|\int_{0}^{t}\int_{0}^{1}\frac{\bar{J}\mathbb{F}\left(\bar{J}\right)}{\bar{\rho}^{3}}\sigma^{2}\d x \d  W \right|^{m}\right]\\
 \ls &\frac{\vartheta_{2}}{2}  \mathbb{E}\left[\left(\sup_{s\in [0,t]}\left|\bar{J}\right| \left\| \sigma \right\|_{2}^{2} \right)^{m}\right]+C_{\vartheta_{2}} \mathbb{E}\left[\left(\int_{0}^{t}\left|\bar{J}\right|\left\| \sigma \right\|_{2}^{2} \d s \right)^{m}\right],\notag
\end{align}
\begin{align}
& \mathbb{E}\left[\left|\int_{0}^{t}\int_{0}^{1}\frac{\bar{J}\mathbb{F}'\left(\bar{J}\right)+\mathbb{F}\left(\bar{J}\right)}{\bar{\rho}^{2}}j\sigma\d  W\right|^{m}\right]\\
 \ls &\frac{\vartheta_{2}}{2}  \mathbb{E}\left[\left(\sup_{s\in [0,t]}\left|\bar{J}\right| \left\| \sigma \right\|_{2}^{2} \right)^{m}\right]+C_{\vartheta_{2}} \mathbb{E}\left[\left(\int_{0}^{t}\left|\bar{J}\right|\left\| j \right\|_{2}^{2} \d s \right)^{m}\right],\notag
\end{align}
and
\begin{align}
&\mathbb{E}\left[\left|\int_{0}^{t}\int_{0}^{1} O\left(\left(\left|j\right|^{3}+\left|\sigma\right|^{3}\right)\right)\d x\d  W \right|^{m}\right] \\
\ls &\vartheta_{3}\mathbb{E}\left[\left( \sup\limits_{s\in [0, t]}\left\|\w\right\|_{2}^{3}\right)^{m}\right]+C_{\vartheta_{3}} \mathbb{E}\left[\left( \int_{0}^{t}\left\|\w\right\|_{2}^{3} \d s\right)^{m}\right], \notag
\end{align}
where $\vartheta_{3}$ is sufficiently small such that $\vartheta_{3} \sup\limits_{s\in [0, t]}\left\|\w\right\|_{2}^{3}$ can be controlled by the error term in the left side.
In summary, we have
\begin{align}\label{zero order estimate of j}
& \mathbb{E}\left[\left|\int_{0}^{t}\d \int_{0}^{1}\left(\frac{\left(\bar{\rho}j-\bar{J}\sigma\right)^{2}}{2\left(\bar{\rho}+\sigma\right)\bar{\rho}^{2}}+G\left(\bar{\rho}+\sigma\right)
 -G\left(\bar{\rho}\right)-G'\left(\bar{\rho}\right)\sigma+\frac{\tilde{e}^{2}}{2}\right)(s)\d x \right|^{m}\right]\notag\\
& + \mathbb{E}\left[\left|\int_{0}^{t}\int_{0}^{1} \frac{\left(\bar{\rho}j-\bar{J}\sigma\right)^{2}}{2\left(\bar{\rho}+\sigma\right)\bar{\rho}^{2}} \d x \d s\right|^{m}\right] \notag \\
\ls &\mathbb{E}\left[\left|\int_{0}^{t}\left.\left(\frac{3\bar{J}j^2}{2\bar{\rho}^{2}}+ O\left(\left(\left|j\right|^{3}+\left|\sigma\right|^{3}\right)\right)\right) \right|_{0}^{1}\d s   \right|^{m}\right]+ \mathbb{E}\left[\left|\int_{0}^{t} O\left(\left|\bar{J}\right|\left\|\w\right\|_{2}^{2} \right)\d s \right|^{m}\right]\\
& +\mathbb{E}\left[\left|\int_{0}^{t}\int_{0}^{1} O\left(\left(\left|j\right|^{3}+\left|\sigma\right|^{3}\right)\left|\bar{J}\right|\right)\d x\d s\right|^{m}\right]+ C\mathbb{E}\left[\left|\int_{0}^{t} \left\|\w\right\|_{2}^{3} \d s\right|^{m}\right] \notag\\
&+\mathbb{E}\left[\left|\vartheta_{1} \sup\limits_{s\in [0, t]}\left|\bar{J}\right| \left\| j \right\|_{2}^{2}+\vartheta_{2} \sup\limits_{s\in [0, t]}\left|\bar{J}\right| \left\| \sigma \right\|_{2}^{2} \right|^{m}\right]\notag\\
\ls & C \mathbb{E}\left[\left|\int_{0}^{t}\left|\bar{J}\right|\left\|\w\right\|_{2}^{2} \d s \right|^{m}\right]+ C \mathbb{E}\left[\left|\int_{0}^{t} \left\|\w\right\|_{2}^{3} \d s\right|^{m}\right]\notag\\
&+\mathbb{E}\left[\left|\vartheta_{1} \sup\limits_{s\in [0, t]}\left|\bar{J}\right| \left\| j \right\|_{2}^{2}+\vartheta_{2} \sup\limits_{s\in [0, t]}\left|\bar{J}\right| \left\| \sigma \right\|_{2}^{2} \right|^{m}\right], \notag
\end{align}
where $C$ is a $m$-th power of some constant.
Since $G''\left(\rho\right)>0$, $G$ is strictly convex, the steady state is smooth, so the first integral above is at least $\mathbb{E}\left[\left|\int_{0}^{t}\d \left( 2\zeta_{1}\int_{0}^{1}\left(j^{2}+\sigma^{2}+\tilde{e}^{2}\right)\d x \right)\right|^{m}\right]$ for small enough $\left\|\w\right\|_{2}$ and $\left|\bar{J}\right|$.  $\vartheta_{1}$ and $\vartheta_{2} $ are small enough such that
\begin{align}
\mathbb{E}\left[\left|\vartheta_{1}\sup\limits_{s\in [0, t]}\left|\bar{J}\right| \left\| j \right\|_{2}^{2}+\vartheta_{2} \sup\limits_{s\in [0, t]}\left|\bar{J}\right| \left\| \sigma \right\|_{2}^{2} \right|^{m}\right]
\end{align}
is bounded by
\begin{align}
\mathbb{E}\left[\left|\sup\limits_{s\in[0,t]}\int_{0}^{s}\d_{\tau}\left( \zeta_{1}\int_{0}^{1}\left(j^{2}+\sigma^{2}+\tilde{e}^{2}\right)\d x \right)\right|^{m}\right].
 \end{align}
 Therefore, there holds
\begin{align}\label{zero order estimate of j summary}
& \mathbb{E}\left[\left|\zeta_{1}\int_{0}^{t}\d \int_{0}^{1}\left(j^{2}+\sigma^{2}+\tilde{e}^{2}\right)\d x\right|^{m}\right]+ \mathbb{E}\left[\left|\int_{0}^{t}\int_{0}^{1} \frac{\left(\bar{\rho}j-\bar{J}\sigma\right)^{2}}{2\left(\bar{\rho}+\sigma\right)\bar{\rho}^{2}} \d x \d s\right|^{m}\right] \notag \\
\ls & \mathbb{E}\left[\left|\int_{0}^{t} O\left(\left|\bar{J}\right|\left\|\w\right\|_{2}^{2} \right)\d s \right|^{m}\right]\\
  & +\mathbb{E}\left[\left|\int_{0}^{t}\int_{0}^{1} O\left(\left(\left|j\right|^{3}+\left|\sigma\right|^{3}\right)\left|\bar{J}\right|\right)\d x\d s\right|^{m}\right]+ C\mathbb{E}\left[\left|\int_{0}^{t} \left\|\w\right\|_{2}^{3} \d s\right|^{m}\right] \notag\\
\ls & C \mathbb{E}\left[\left|\int_{0}^{t}\left|\bar{J}\right|\left\|\w\right\|_{2}^{2} \d s \right|^{m}\right]+C \mathbb{E}\left[\left| \int_{0}^{t} \left\|\w\right\|_{2}^{3} \d s\right|^{m}\right],\notag
\end{align}
where $C$ is a $m$-th power of some constant.
Additionally, the estimate for the dissipation in terms of $\int_{0}^{t}\int_{0}^{1}\left(\sigma^{2}+\tilde{e}^{2}\right)\d x \d s$ is needed. We divide the momentum equation by $\bar{\rho}$, and use the steady state equation to get
\begin{align}
& \frac{\d  j}{\bar{\rho}} + \left(\frac{1}{\bar{\rho}}\left(P\left(\bar{\rho}+\sigma\right)-P\left(\bar{\rho}\right)\right)_{x}-\tilde{e}-\frac{\sigma}{\bar{\rho}}\bar{\Phi}_{x}-\frac{\sigma \tilde{e}}{\bar{\rho}} + \frac{j}{\bar{\rho}}\right) \d t \\
=& \left(-\frac{2\left(\bar{J}+j\right)}{\bar{\rho}\left(\bar{\rho}+\sigma\right)}j_{x} + \frac{1}{\bar{\rho}}\left(\left(\frac{\bar{J}+j}{\bar{\rho}+\sigma}\right)^{2}-\left(\frac{\bar{J}}{\bar{\rho}}\right)^{2}\right)\bar{\rho}_{x} + \frac{1}{\bar{\rho}}\left(\frac{\bar{J}+j}{\bar{\rho}+\sigma}\right)^{2}\sigma_{x}\right)\d t + \frac{\mathbb{F}}{\bar{\rho}}\d  W. \notag
\end{align}
As in \cite{Guo2006StabilityOS}, we have
\begin{align}
\frac{1}{\bar{\rho}}\left(P\left(\bar{\rho}+\sigma\right)-P\left(\bar{\rho}\right)\right)_{x}=\left(\frac{P'\left(\bar{\rho}\right)}{\bar{\rho}}\sigma\right)_{x}+ \frac{\bar{\rho}_{x}}{\bar{\rho}^{2}}P'\left(\bar{\rho}\right)\sigma+ \left(O\left(\sigma^{2}\right)\right)_{x}+O\left(\sigma^{2}\right) .
\end{align}
By \eqref{steady state for 1-D contact}, it holds 
\begin{align}
\bar{\Phi}_{x}= \frac{1}{\bar{\rho}}\left(\frac{\bar{J}^{2}}{ \bar{\rho}}\right)_{x} + \frac{1}{\bar{\rho}}\left( P\left(\bar{\rho}\right)\right)_{x} + \frac{\bar{J}}{\bar{\rho}},
\end{align}
and
\begin{align}
-\frac{\sigma \bar{\Phi}_{x}}{\bar{\rho}}=-\frac{P'\left(\bar{\rho}\right)\bar{\rho}_{x}\sigma}{\bar{\rho}^{2}}+ O\left(\bar{J}\right)\sigma.
\end{align}
Combining the terms, we have
\begin{align}\label{equation of E}
&-\left(\frac{P'\left(\bar{\rho}\right)}{\bar{\rho}}\sigma\right)_{x} \d t + \tilde{e}\d t \\
= &\frac{\d  j+ j\d t + \mathbb{F} \d  W}{\bar{\rho}}+ O\left(\bar{J}\right)\left(\left|\w\right|+ \left|\w_{x}\right|\right)\d t + O\left(\left|\w\right|^{2}+ \left|\w_{x}\right|^{2} + \tilde{e}^{2} \right) \d t. \notag
\end{align}
By It\^o's formula, we calculate
\begin{align}
\d \left(\frac{j\tilde{e}}{\bar{\rho}}\right)=\frac{\left(\d j\right) \tilde{e}}{\bar{\rho}}+\frac{j \left(\d \tilde{e}\right)}{\bar{\rho}},
\end{align}
due to $\left\langle \d \tilde{e}, \d j\right\rangle=0$. We multiple \eqref{equation of E} with $\tilde{e}$, and then integrate it over $x$. Since $\tilde{e}_{x}=\sigma=0$ at $x=0$ and $x=1$, we have
\begin{align}\label{balance of E}
&\int_{0}^{1} \frac{P'\left(\bar{\rho}\right)}{\bar{\rho}} \tilde{e}_{x}^{2} \d x \d t + \int_{0}^{1}\tilde{e}^{2} \d x \d t \notag\\
=&\d  \left(\int_{0}^{1}\frac{j\tilde{e}}{\bar{\rho}} \d x\right) + \int_{0}^{1}\frac{j^{2}}{\bar{\rho}} \d x \d t-\int_{0}^{1}j\d x \int_{0}^{1}\frac{j}{\bar{\rho}} \d x \d t+\int_{0}^{1}\frac{j\tilde{e}}{\bar{\rho}} \d x \d t  \\
& + \int_{0}^{1}\frac{\mathbb{F} \tilde{e}}{\bar{\rho}} \d x \d  W + \int_{0}^{1}O\left(\left|\bar{J}\right|\tilde{e} \left(\left|\w\right|+\left|\w_{x}\right|\right)\right)\d x \d t +\int_{0}^{1}O\left(\left|\w\right|^{2}+ \left|\w_{x}\right|^{2} + \tilde{e}^{2} \right)\tilde{e} \d x \d t. \notag
\end{align}
We deal with the right-hand side by H\"older's inequality:
\begin{align}
& -\int_{0}^{1}j\d x \int_{0}^{1}\frac{j}{\bar{\rho}} \d x \d t+\int_{0}^{1}\frac{j^{2}}{\bar{\rho}} \d x \d t + \int_{0}^{1}\frac{j\tilde{e}}{\bar{\rho}} \d x \d t \notag\\
\ls &C \int_{0}^{1}\frac{j^{2}}{\bar{\rho}} \d x \d t+\int_{0}^{1}\frac{\tilde{e}^{2}}{2} \d x \d t+\int_{0}^{1}\frac{j^{2}}{2\bar{\rho}^{2}} \d x \d t \\
\ls & C\int_{0}^{1}j^{2}\d x \d t + \int_{0}^{1}\frac{\tilde{e}^{2}}{2} \d x \d t,\notag
\end{align}
where $C$ depends on $\bar{\rho}$.
For the stochastic term, we estimate it as follows
\begin{align}
\mathbb{E}\left[\left|\int_{0}^{t}\int_{0}^{1}\frac{\mathbb{F}\tilde{e}}{\bar{\rho}}\d x \d  W\right|^{m}\right] \ls C \mathbb{E}\left[\left| \int_{0}^{t} \left|\bar{J}\right|\left\|\w\right\|_{2}^{2} \d s\right|^{m}\right] + C\mathbb{E}\left[\left|\int_{0}^{t}\left\|\w\right\|_{2}^{3}\d s\right|^{m}\right],
\end{align}
where $C$ is the $m$-th power of some constant.
Multiplying \eqref{balance of E} with some small constant, adding \eqref{zero order estimate of j summary}, we have
\begin{align}\label{conclusion of zero order estimate}
&\mathbb{E}\left[\left|\int_{0}^{1} \left(j^{2}+\sigma^{2}+ \tilde{e}^{2}\right)(t)\d x + \zeta_{2}\int_{0}^{t}\int_{0}^{1} \left(j^{2}+\sigma^{2}+ \tilde{e}^{2}\right) \d x \d s \right|^{m}\right] \notag\\
\ls &C_{1}^{m}\mathbb{E}\left[\left|\int_{0}^{t} \left|\bar{J}\right|\left\|\w\right\|_{2}^{2} \d s  +\int_{0}^{t}\left\|\w\right\|_{2}^{3}\d s+\int_{0}^{t}\int_{0}^{1}\left|\bar{J}\right|\tilde{e} \left(\left|\w\right|+\left|\w_{x}\right|\right)\d x \d s \right|^{m}\right] \\
&+C_{1}^{m}\mathbb{E}\left[\left|\int_{0}^{t}\int_{0}^{1}\left(\left|\w\right|^{2}+ \left|\w_{x}\right|^{2} + \tilde{e}^{2} \right)\tilde{e} \d x \d s\right|^{m}\right]\notag\\
\ls & \mathbb{E}\left[\left| C_{2}\int_{0}^{t} \left|\bar{J}\right|\left\|\w\right\|_{2}^{2} \d s\right|^{m}\right]  +\mathbb{E}\left[\left|C_{2}\int_{0}^{t}\left\|\w\right\|_{2}^{3}\d s\right|^{m}\right],\notag
\end{align}
where $\zeta_{2}$, $C_{1}$, and $C_{2}$ are positive constants.
\subsection{First-order estimates on $\w$}

Inspired by \cite{Guo2006StabilityOS}, we choose the approximately symmetrizing matrix:
\begin{align}\label{symmetrizing matrix of first order estim}
\mathcal{D}=\left[\begin{array}{cc}
s & 0 \\
0 & r
\end{array}\right],
\end{align}
where $s=\left\{-\frac{\bar{J}^{2}}{\bar{\rho}^{2}}+P'\left(\bar{\rho}\right)\right\} r$. $r$ satisfies
\begin{align}\label{r satisfies in first order estimates}
\left\{\frac{\bar{J}^{2}}{\bar{\rho}^{2}}-P'\left(\bar{\rho}\right)\right\}r_{x}+\left\{\frac{3\bar{J}^{2}}{\bar{\rho}^{3}}\bar{\rho}_{x}+P''\left(\bar{\rho}\right)\bar{\rho}_{x}-\frac{P'\left(\bar{\rho}\right)}{\bar{\rho}}\bar{\rho}_{x}-\frac{\bar{J}}{\bar{\rho}}\right\} r = 0.
\end{align}
We multiply \eqref{1-d system with contact boundary} on the left by $\mathcal{D}$ to obtain
\begin{align}\label{symmetrized sto Euler-Poisson system contact boundary}
\d  \left(\mathcal{D}\w\right)+\left(\bar{\mathcal{A}}\w_{x}+\bar{\mathcal{B}}\w + \bar{\mathcal{C}}\right) \d t= \bar{\mathcal{N}} \d t + \mathcal{D}\left[\begin{array}{c}0\\ \mathbb{F}\d  W\end{array}\right],
\end{align}
$\bar{\mathcal{A}}=\mathcal{D}\mathcal{A}$, $\bar{\mathcal{B}}=\mathcal{D}\mathcal{B}$, $ \bar{\mathcal{C}}=\mathcal{D}\mathcal{C}$, $\bar{\mathcal{N}} =\mathcal{D}\mathcal{N}$.
Differentiating \eqref{symmetrized sto Euler-Poisson system contact boundary} with respect to $x$, we get
\begin{align}\label{1 order derivative of symmetrized system 1}
&\d  \left(\mathcal{D} \w_{x}\right) - \left(\mathcal{D}_{x} \left(\mathcal{A}\w_{x}+\mathcal{B}\w + \mathcal{C}- \mathcal{N}\right)\right)\d t + \left(\bar{\mathcal{A}}\w_{xx}+ \left(\bar{\mathcal{A}}_{x}+\bar{\mathcal{B}}\right)\w_{x}+\bar{\mathcal{B}}_{x}\w + \bar{\mathcal{C}}_{x}\right) \d t \notag \\
=& \bar{\mathcal{N}}_{x} \d t + \mathcal{D}_{x}\left[\begin{array}{c}0\\\mathbb{F}\d  W\end{array}\right]  + \mathcal{D} \left[\begin{array}{c}0\\ \mathbb{F}_{x} \d  W\end{array}\right].
\end{align}
By It\^o's formula, it holds that
\begin{align}\label{equality of fiist order estim after Ito}
 & \d  \left( \mathcal{D}\frac{\left|\w_{x}\right|^{2}}{2}\right)
 = \mathcal{D} \d  \w_{x}\cdot \w_{x} + \frac{r^{-1}\left|r\mathbb{F}_{x}+r_{x}\mathbb{F}\right|^{2}}{2}\d t  \notag\\
=& \w_{x}\mathcal{D}_{x}\left(\mathcal{A}\w_{x}+\mathcal{B}\w + \mathcal{C}- \mathcal{N}\right)\d t- \bar{\mathcal{A}} \w_{xx}\w_{x}\d t  - \left(\bar{\mathcal{A}}_{x} + \bar{\mathcal{B}}\right) \w_{x}\cdot \w_{x} \d t \\
 & - \bar{\mathcal{B}}_{x}\w \w_{x}\d t - \bar{\mathcal{C}}_{x} \w_{x}\d t+ \bar{\mathcal{N}}_{x}\w_{x}\d t + r_{x} \mathbb{F} j_{x} \d  W + r \mathbb{F}_{x}j_{x} \d  W+\frac{r^{-1}\left|r\mathbb{F}_{x}+r_{x}\mathbb{F}\right|^{2}}{2}\d t.\notag
\end{align}
As in \cite{Guo2006StabilityOS}, by separating $\bar{\mathcal{A}}$ into symmetric matrix and an antisymmetric matrix, we have
\begin{align}
\int_{0}^{1} \bar{\mathcal{A}}\w_{xx} \cdot \w_{x} \d x = \int_{0}^{1} O\left(\left(\left|\w\right|+\left|\sigma_{x}\right|+\left|j_{x}\right|\right)^{3}\right)\d x.
\end{align}
Now we collect all the terms containing $\w_{x}^{2}$:
\begin{align}
\int_{0}^{1} \left(-\mathcal{D}_{x}\mathcal{A}+\bar{\mathcal{A}}_{x}+\bar{\mathcal{B}}-\frac{1}{2}\bar{\mathcal{A}}_{x}\right)\w_{x} \cdot \w_{x} \d x = \int_{0}^{1} \mathcal{Q} \w_{x}\cdot \w_{x} \d x,
\end{align}
where $ \mathcal{Q}=\frac{1}{2}\mathcal{D}\mathcal{A}_{x}-\frac{1}{2}\mathcal{D}_{x}\mathcal{A}+\mathcal{D}\mathcal{B}=\left[\begin{array}{cc}
q_{11} & q_{12} \\
 q_{21} & q_{22}
\end{array}\right]$.
$r$ is chosen such that
\begin{align}
\left|q_{12}+q_{21}\right|=&O\left(\left|\w\right|+\left|\w_{x}\right|\right), \\
q_{22}=&\frac{1}{2}r\left\{2\frac{\bar{J}+j}{\bar{\rho}+\sigma}\right\}_{x}-\frac{1}{2}r_{x}\left(2\frac{\bar{J}+j}{\bar{\rho}+\sigma}\right) +r \left(1-2\frac{\bar{J}}{\bar{\rho}^{2}}\bar{\rho}_{x}\right)\\
=& r-\frac{\bar{J}}{\bar{\rho}}r_{x}-3\frac{\bar{J}}{\bar{\rho}^{2}}\bar{\rho}_{x}r+O\left(|\w|+|\w_{x}|\right).\notag
\end{align}
\eqref{r satisfies in first order estimates} admits a positive solution $r(x)$.
Thus, for small $\bar{J}$, there is a constant $\zeta_{3}>0$, such that
\begin{align}
\mathcal{Q}\w_{x}\cdot\w_{x}\geqslant \zeta_{3}j_{x}^{2} + O\left(\left|\w\right|+\left|\w_{x}\right|\right)\left|\w_{x}\right|^{2}.
\end{align}
For the terms containing $\w\cdot \w_{x}$, we estimate
\begin{align}
\left(-\mathcal{D}_{x}\mathcal{B}+\bar{\mathcal{B}}_{x}\right)\w \cdot \w_{x}=\mathcal{D}\mathcal{B}_{x} \w \cdot \w_{x} = O\left(\left|\w\right|\left|\w_{x}\right|\right)\ls \vartheta_{3} \left|\w_{x}\right|^{2}+C_{\vartheta_{3}} \left|\w\right|^{2},
\end{align}
where $\vartheta_{3}$ is small enough, balanced by the good term $\zeta_{4}j_{x}^{2}$ and $\zeta_{4}\sigma_{x}^{2}$ in the later estimates.
For terms concerning $\w_{x}$, we have
\begin{align}
 & \left(-\mathcal{D}_{x}\mathcal{C}+\bar{\mathcal{C}}_{x}\right) \cdot \w_{x}\d t= \mathcal{D}\mathcal{C}_{x}\cdot \w_{x}\d t= - r\left(\bar{\rho} \tilde{e} \right)_{x}j_{x}\d t= r\left(\bar{\rho}\sigma+\bar{\rho}_{x}\tilde{e}\right)\sigma_{t}\d t \\
=& \d \left(\frac{1}{2}r\bar{\rho}\sigma^{2} + r\bar{\rho}_{x}\tilde{e}\sigma \right)-r\bar{\rho}_{x}\tilde{e}_{t}\sigma\d t
= \d \left(\frac{1}{2}r\bar{\rho}\sigma^{2} + r\bar{\rho}_{x}\tilde{e}\sigma \right) + O\left(\sigma^{2}+j^{2}\right)\d t, \notag
\end{align}
and
\begin{align}
\bar{\mathcal{N}}_{x}\w_{x}=O\left(\left|\w\right|+\left|\tilde{e}\right|\right)\left(\left|\w\right|^{2}+\left|\w_{x}\right|^{2}\right).
\end{align}
We estimate
\begin{align}
\int_{0}^{1}  \frac{r^{-1}\left|r\mathbb{F}_{x}+r_{x}\mathbb{F}\right|^{2}}{2}\d x \d t\ls C\int_{0}^{1}\left(\left|\w\right|^{2}+\left|\w_{x}\right|^{2}\right)^{2}\d x \d t,
\end{align}
\begin{align}
\int_{0}^{1}\frac{r^{-1}\left|r_{x}\mathbb{F}\right|^{2}}{2}\d x \d t \ls C\int_{0}^{1}\left|\w\right|^{3}\d x \d t.
\end{align}
For the stochastic term, with the assumption that $|\bar{J}|^{3}\ls \mathbb{E}\left[\left\|\w_{0} \right\|^{2}\right]$ is sufficiently small, we have
\begin{align}
& \mathbb{E} \left[\left| \int_{0}^{t}\int_{0}^{1} r_{x} \mathbb{F} j_{x} \d x \d  W \right|^{m}\right] \notag\\
\ls & \mathbb{E} \left[ \left(C\int_{0}^{t}\left|\int_{0}^{1} r_{x} \mathbb{F} j_{x} \d x \right|^{2} \d s \right)^{\frac{m}{2}} \right]
= \mathbb{E} \left[\left(C\int_{0}^{t}\left|\int_{0}^{1} r_{x} \left|\bar{J}+j\right|^{2} j_{x} \d x \right|^{2} \d s \right)^{\frac{m}{2}} \right]\notag\\
= &\mathbb{E} \left[\left(C\int_{0}^{t}\left|\int_{0}^{1} r_{x} \left(\bar{J}^{2}+2\bar{J}j+j^{2}\right) j_{x} \d x \right|^{2} \d s \right)^{\frac{m}{2}} \right] \\
\ls & \mathbb{E} \left[\left\|\w_{0} \right\|^{2m} \right]+\mathbb{E} \left[\left( C \int_{0}^{t} \left|\bar{J}\right| \left\|\w\right\|_{1}^{2} \d s \right)^{m}\right]\notag\\
  &+ \mathbb{E} \left[ \left( \frac{\vartheta_{4}}{4}\sup\limits_{s\in [0,t]}\int_{0}^{1}\left|j\right|^{2}\d x\right)^{m} \right] +\mathbb{E} \left[\left(C_{\vartheta_{4}} \int_{0}^{t} \left\|\w\right\|_{1}^{3} \d s \right)^{m}\right],\notag
\end{align}
where $\vartheta_{4}$ is taken that $\mathbb{E} \left[\left(\vartheta_{4} \sup\limits_{s\in [0,t]}\int_{0}^{1}\left|j\right|^{2}\d x\right)^{m} \right]$ can be balanced by the zeroth-order estimates and
 $\mathbb{E}\left[ \left(\frac{\vartheta_{4}}{4} \sup\limits_{s\in [0,t]}\int_{0}^{1}\left|j_{x}\right|^{2}\d x\right)^{m}\right]$ can be balanced by the left hand-side of this first-order estimates.
Combining the above terms, we obtain
\begin{align}
& \mathbb{E} \left[\left| \sup\limits_{s\in[0,t]} \int_{0}^{1} \left(j_{x}^{2}+\sigma_{x}^{2}\right)(s)\d x +\zeta_{5} \int_{0}^{t}\int_{0}^{1}j_{x}^{2} \d x \d s  \right|^{m}\right] \notag\\
\ls &  \mathbb{E} \left[\left| \int_{0}^{1}\left(\frac{1}{2}\bar{\rho}\sigma^{2}+ r\bar{\rho}_{x}\tilde{e}\sigma \right)\d x+C_{3}\int_{0}^{t}\int_{0}^{1}\left(\left|\w\right|^{2}+\left(\left|\w_{x}\right|+\left|\w\right|+\left|\tilde{e}\right|\right)^{3}\right)\d x \d s \right|^{m}\right] \\
\ls & \mathbb{E} \left[\left\|\w_{0} \right\|^{2m} \right] + \mathbb{E} \left[\left| \int_{0}^{1}\left(\frac{1}{2}\bar{\rho}\sigma^{2}+ \bar{\rho}_{x}\tilde{e}\sigma \right)\d x+C_{4}\int_{0}^{t} \left|\bar{J}\right|\left\|\w\right\|_{2}^{2} \d s  +C_{4}\int_{0}^{t}\left\|\w\right\|_{2}^{3}\d s \right|^{m}\right]. \notag
\end{align}
Next we give the estimate of $\int_{0}^{t}\int_{0}^{1}\sigma_{x}^{2} \d x \d s$ by multiplying \eqref{1 order derivative of symmetrized system 1} with $\left[\begin{array}{c}0  \\ \sigma\end{array}\right]$. By It\^o's formula, we know
\begin{align}
\mathcal{D}\d \left(\w_{x}\cdot\left[\begin{array}{c}0  \\ \sigma\end{array}\right]\right)= \mathcal{D}\d \w_{x}\cdot\left[\begin{array}{c}0  \\ \sigma\end{array}\right]+\mathcal{D}\w_{x} \cdot\left[\begin{array}{c}0  \\ -j_{x}\end{array}\right]\d t.
\end{align}
Then, we have
\begin{align}\label{equality for estimate of sigma x}
&-\mathcal{D}\d  \left(\w_{x}\cdot\left[\begin{array}{c}0  \\ \sigma\end{array}\right]\right) + \mathcal{D} \w_{x} \cdot\left[\begin{array}{c}0  \\ -j_{x}\end{array}\right]\d t + \left(\mathcal{D}_{x} \left(\mathcal{A}\w_{x}+\mathcal{B}\w + \mathcal{C}- \mathcal{N}\right)\right)\cdot\left[\begin{array}{c}0  \\ \sigma\end{array}\right]\d t \notag\\
 &-\left(\bar{\mathcal{A}}\w_{xx}+ \left(\bar{\mathcal{A}}_{x}+\bar{\mathcal{B}}\right)\w_{x}+\bar{\mathcal{B}}_{x}\w +\bar{\mathcal{C}}\right)\cdot\left[\begin{array}{c}0  \\ \sigma\end{array}\right] \d t  \\
=& -\left(\bar{\mathcal{N}}_{x} \d t + \mathcal{D}_{x}\left[\begin{array}{c}0\\\mathbb{F}\d  W\end{array}\right]  + \mathcal{D}\left[\begin{array}{c}0\\ \mathbb{F}_{x} \d  W\end{array}\right]\right)\cdot \left[\begin{array}{c}0  \\ \sigma\end{array}\right],\notag
\end{align}
where the first term in the left hand-side in \ref{equality for estimate of sigma x} holds
\begin{align}
-\mathcal{D}\d  \left(\w_{x}\cdot\left[\begin{array}{c}0  \\ \sigma\end{array}\right]\right)=-\d \left(rj_{x}\sigma\right),
 \end{align}
and the second term in the left hand-side in \ref{equality for estimate of sigma x} holds
 \begin{align}
\mathcal{D}\w_{x} \cdot\left[\begin{array}{c}0  \\ -j_{x}\end{array}\right]\d t=-r j_{x}^{2}\d t.
  \end{align}
We consider the stochastic term.
For $ \mathcal{D}\left[\begin{array}{c}0\\ \mathbb{F}_{x} \d  W\end{array}\right]\cdot \left[\begin{array}{c}0  \\ \sigma\end{array}\right]=r \mathbb{F}_{x}\sigma \d  W$, there holds
\begin{align}
& \mathbb{E} \left[\left| \int_{0}^{t}\int_{0}^{1} r \mathbb{F}_{x}\sigma \d x \d  W \right|^{m}\right] \notag\\
\ls & \mathbb{E} \left[ \left(\int_{0}^{t}\left|\int_{0}^{1} r \mathbb{F}_{x}\sigma  \d x \right|^{2} \d s \right)^{\frac{m}{2}} \right] \\
\ls &  \mathbb{E} \left[\left\|\w_{0} \right\|^{2m} \right]+\mathbb{E} \left[\left( C \int_{0}^{t} \left|\bar{J}\right| \left\|\w\right\|_{1}^{2} \d s \right)^{m}\right] \notag\\
&+\mathbb{E} \left[ \frac{\vartheta_{4}}{4} \left(\sup\limits_{s\in [0,t]}\int_{0}^{1}\left|\sigma\right|^{2}\d x\right)^{m} \right]+C_{\vartheta_{4}}\mathbb{E} \left[\left( \int_{0}^{t} \int_{0}^{1} C\left\|\w\right\|_{2}^{3} \d s \right)^{m}\right].\notag
\end{align}
The estimate of $\mathcal{D}_{x}\left[\begin{array}{c}0\\\mathbb{F}\d  W\end{array}\right]$ is obtained similarly and upper bounded by the same quantity as the above formula.
By substitution, the key term holds
 \begin{align}
  & -\bar{\mathcal{A}}\w_{xx}\cdot\left[\begin{array}{c}0  \\ \sigma \end{array}\right]\d t\notag\\
  = &-\left(\bar{\mathcal{A}}\w_{x}\left[\begin{array}{c} 0 \\ \sigma \end{array}\right]\right)_{x}\d t+ \bar{\mathcal{A}}\w_{x}\cdot\left[\begin{array}{c} 0 \\ \sigma_{x} \end{array}\right]\d t +\bar{\mathcal{A}}_{x} \w_{x}\cdot\left[\begin{array}{c} 0 \\ \sigma \end{array}\right]\d t\\
 = & -\left(\bar{\mathcal{A}}\w_{x}\left[\begin{array}{c} 0 \\ \sigma \end{array}\right]\right)_{x}\d t +  \left( -\left(\frac{\bar{J}+j}{\bar{\rho}+\sigma}\right)^{2}+P'\left(\bar{\rho}+\sigma\right)\right)r \sigma_{x}^{2}\d t+  2\frac{\bar{J}+j}{\bar{\rho}+\sigma}rj_{x}\sigma_{x}\d t\notag\\
 & +\left(\left( -\left(\frac{\bar{J}+j}{\bar{\rho}+\sigma}\right)^{2}+P'\left(\bar{\rho}+\sigma\right)\right)r\right)_{x}\sigma_{x}\sigma\d t + 2\left(\frac{\bar{J}+j}{\bar{\rho}+\sigma}r\right)_{x} j_{x}\sigma\d t. \notag
 \end{align}
 We integrate the above formula in $[0,1]$ with respect to $x$. Then, the first term holds
  \begin{align}
  -\int_{0}^{1}\left(\bar{\mathcal{A}}\w_{x}\left[\begin{array}{c} 0 \\ \sigma \end{array}\right]\right)_{x}\d x \d t =0;
   \end{align}
 the second term is at leat $\zeta_{4}\sigma_{x}^{2}$;
 the other three terms are bounded by
 \begin{align}
 C\left(\bar{J}+j\right)\w_{x}^{2}+C\left| \w_{x}\cdot \w \right|\ls  C\left(\bar{J}+j\right)\w_{x}^{2}+\vartheta_{5}\w_{x}^{2}+C_{\vartheta_{5}}\w^{2},\quad \vartheta_{5}\ls \min\left\{\frac{\zeta_{5}}{4},\frac{\zeta_{4}}{4}\right\}.
\end{align}
To summarize, we have
\begin{align}
 &\mathbb{E} \left[\left|\int_{0}^{t}\int_{0}^{1}\zeta_{5}\sigma_{x}^{2}\d x \d s \right|^{m}\right]\notag\\
  \ls & \mathbb{E} \left[\left|\int_{0}^{t}\int_{0}^{1} C\left(\bar{J}+j\right)\w_{x}^{2}+C_{\vartheta_{5}}\w^{2}+C|j_{x}|^{2}\d x \d s \right|^{m}\right]+ \vartheta_{4}^{m} \mathbb{E}\left[ \left(\sup\limits_{s\in [0,t]}\int_{0}^{1}\left|\sigma\right|^{2}\d x\right)^{m} \right]\notag\\
 &+ \mathbb{E} \left[\left( \int_{0}^{t} \int_{0}^{1} C_{\vartheta_{4}}\left\|\w\right\|_{2}^{3} \d s \right)^{m}\right]+ \vartheta_{5}^{m}\mathbb{E} \left[\left|\int_{0}^{t}\int_{0}^{1} \w_{x}^{2}\d x \d s \right|^{m}\right]-\mathbb{E} \left[\left|\int_{0}^{t}\d \int_{0}^{1}rj_{x}\sigma\d x\right|^{m}\right]\notag\\
\ls & \vartheta_{4}^{m} \mathbb{E}\left[ \left(\sup\limits_{s\in [0,t]}\int_{0}^{1}\left|\sigma\right|^{2}\d x\right)^{m} \right]+ \mathbb{E} \left[\left|\int_{0}^{t}\int_{0}^{1} C_{\vartheta_{5}}\w^{2}\d x \d s \right|^{m}\right]\\
 &+\vartheta_{5}^{m}\mathbb{E} \left[\left|\int_{0}^{t}\int_{0}^{1} \w_{x}^{2}\d x \d s \right|^{m}\right]+ \mathbb{E} \left[\left( \int_{0}^{t} \int_{0}^{1} C\left|\bar{J}\right|\left\|\w\right\|_{2}^{2} \d s \right)^{m}\right]\notag\\
&+ \mathbb{E} \left[\left( \int_{0}^{t} \int_{0}^{1} C_{\vartheta_{4}}\left\|\w\right\|_{2}^{3} \d s \right)^{m}\right]
-\mathbb{E} \left[\left|\int_{0}^{t}\d \int_{0}^{1}rj_{x}\sigma\d x\right|^{m}\right].\notag
\end{align}
Multiplying the above formula with some small constants such that the coefficent constant is less than $\frac{\zeta_{5}}{4}$, $\zeta_{6}=\min\left\{\frac{\zeta_{5}}{2},\frac{\zeta_{4}}{2}\right\}$, we obtain
  \begin{align}
&\mathbb{E} \left[\left| \int_{0}^{1} \left(j_{x}^{2}+\sigma_{x}^{2}\right)(t)\d x \right|^{m}\right] + \mathbb{E} \left[\left|\zeta_{6} \int_{0}^{t}\int_{0}^{1}\left(j_{x}^{2} +\sigma_{x}^{2}\right) \d x \d s  \right|^{m}\right]\notag\\
\ls & \mathbb{E} \left[\left| \int_{0}^{1}\left(\frac{1}{2}\bar{\rho}\sigma^{2}+ r\bar{\rho}_{x}\tilde{e}\sigma -rj_{x}\sigma\right)\d x \right|^{m}\right]+\vartheta_{4}^{m} \mathbb{E}\left[ \left(\sup\limits_{s\in [0,t]}\int_{0}^{1}\left|\w\right|^{2}\d x\right)^{m} \right]\\
&+ \mathbb{E} \left[\left\|\w_{0} \right\|^{2m} \right] +\mathbb{E} \left[\left|C_{4}\int_{0}^{t} \left|\bar{J}\right|\left\|\w\right\|_{2}^{2} \d s +C_{4}\int_{0}^{t}\left\|\w\right\|_{2}^{3}\d s\right|^{m}\right]\notag\\
&+ \mathbb{E} \left[\left|\int_{0}^{t}\int_{0}^{1} C_{\vartheta_{5}} \w^{2}\d x \d t \right|^{m}\right],\notag
\end{align}
 Together with the zeroth-order estimates \eqref{conclusion of zero order estimate}, since $\sup\limits_{s\in[0,t]}\left| \int_{0}^{1}\left(\frac{1}{2}\bar{\rho}\sigma^{2}+ r\bar{\rho}_{x}\tilde{e}\sigma -rj_{x}\sigma\right)\d x \right|\ls \sup\limits_{s\in[0,t]}\vartheta_{6}\left\|\w_{x}\right\|^{2}+\sup\limits_{s\in[0,t]}C_{\vartheta_{6}}\left\|\w\right\|^{2}$, we have
\begin{align}
&\mathbb{E} \left[\left| \sup\limits_{s\in[0,t]} \int_{0}^{1} \left(j_{x}^{2}+\sigma_{x}^{2}\right)(s)\d x \right|^{m}\right] + \mathbb{E} \left[\left|\zeta_{7} \int_{0}^{t}\int_{0}^{1}\left(j_{x}^{2} +\sigma_{x}^{2}\right) \d x \d s  \right|^{m}\right] \notag \\
\ls & \mathbb{E} \left[\left|  \int_{0}^{1} \left(j_{x}^{2}+\sigma_{x}^{2}\right)(0)\d x \right|^{m}\right]+ C \mathbb{E} \left[\left|\int_{0}^{1} \left|\w_{0}\right|^{2}\d x \right|^{m}\right]\\
&+\mathbb{E} \left[\left|C_{5}\int_{0}^{t} \left|\bar{J}\right|\left\|\w\right\|_{2}^{2} \d s +C_{5}\int_{0}^{t}\left\|\w\right\|_{2}^{3}\d s \right|^{m}\right]. \notag
\end{align}

\subsection{Second-order estimates on $\w$}\label{second order estimates subsec 1-D}

For the second-order estimates, we use a different symmetrizing matrix, which involves $t$.
Let $\tilde{\mathcal{D}}=\left[\begin{array}{cc}\tilde{s} &0 \\ 0& \tilde{r}\end{array}\right]$, $\tilde{s}=\left(P'\left(\bar{\rho}+\sigma\right)-\frac{\left(\bar{J}+j\right)^{2}}{\left(\bar{\rho}+\sigma\right)^{2}}\right)\tilde{r}$, where $ \tilde{r}$ is determined later. Then the system becomes
\begin{align}\label{2order estim symmetrized sto Euler-Poisson system contact boundary}
 \tilde{\mathcal{D}}\d  \w+\tilde{\mathcal{D}}\left(\mathcal{A}\w_{x}+\mathcal{B}\w + \mathcal{C}\right) \d t = \tilde{\mathcal{D}} \mathcal{N} \d t + \tilde{\mathcal{D}} \left[\begin{array}{c}0  \\ \mathbb{F}\d  W\end{array}\right].
\end{align}
Differentiating \eqref{2order estim symmetrized sto Euler-Poisson system contact boundary} with respect to $x$, we have
\begin{align}\label{1-Deriv 2order estim symmetrized sto Euler-Poisson system contact boundary}
 &\tilde{\mathcal{D}} \d  \w_{x} + \tilde{\mathcal{D}}_{x} \d  \w + \left(\tilde{\mathcal{A}}\w_{xx}+ \left(\tilde{\mathcal{A}}_{x}+\tilde{\mathcal{B}}\right)\w_{x}+\tilde{\mathcal{B}}_{x}\w + \tilde{\mathcal{C}}_{x}\right) \d t \notag \\
=& \tilde{\mathcal{N}}_{x} \d t + \tilde{\mathcal{D}}_{x}\left[\begin{array}{c}0  \\ \mathbb{F}\d  W\end{array}\right] + \tilde{\mathcal{D}} \left[\begin{array}{c}0  \\ \mathbb{F}_{x}\d  W\end{array}\right],
\end{align}
where $\tilde{\mathcal{A}}=\tilde{\mathcal{D}}\mathcal{A}$, $\tilde{\mathcal{B}}=\tilde{\mathcal{D}}\mathcal{B}$, $\tilde{\mathcal{C}}=\tilde{\mathcal{D}}\mathcal{C}$, and $\tilde{\mathcal{N}}=\tilde{\mathcal{D}}\mathcal{N}$.
Differentiating \eqref{1-Deriv 2order estim symmetrized sto Euler-Poisson system contact boundary} with respect to $x$, we get
\begin{align}\label{2-deriv 2order estim symmetrized sto Euler-Poisson system contact boundary}
 & \tilde{\mathcal{D}} \d  \w_{xx} + 2 \tilde{\mathcal{D}}_{x} \d  \w_{x} + \tilde{\mathcal{D}}_{xx} \d  \w \notag\\
 &+ \left(\tilde{\mathcal{A}}_{x}\w_{xx}+\tilde{\mathcal{A}}\w_{xxx}+ \left(\tilde{\mathcal{A}}_{xx}+\tilde{\mathcal{B}}_{x}\right)\w_{x}+\left(\tilde{\mathcal{A}}_{x}+\tilde{\mathcal{B}}\right)\w_{xx}+\tilde{\mathcal{B}}_{xx}\w + \tilde{\mathcal{B}}_{x}\w_{x} +\tilde{\mathcal{C}}_{xx}\right) \d t \notag \\
=& N_{xx} \d t + \tilde{\mathcal{D}}_{xx} \left[\begin{array}{c}0  \\ \mathbb{F}\d  W\end{array}\right] + 2\tilde{\mathcal{D}}_{x} \left[\begin{array}{c}0  \\ \mathbb{F}_{x}\d  W\end{array}\right]+ \tilde{\mathcal{D}}\left[\begin{array}{c}0  \\ \mathbb{F}_{xx}\d  W\end{array}\right].
\end{align}
Substituting \eqref{2order estim symmetrized sto Euler-Poisson system contact boundary} and \eqref{1-Deriv 2order estim symmetrized sto Euler-Poisson system contact boundary} into \eqref{2-deriv 2order estim symmetrized sto Euler-Poisson system contact boundary}, we have
\begin{align}\label{substituted 2order estim symmetrized sto Euler-Poisson system contact boundary}
 & \tilde{\mathcal{D}} \d  \w_{xx} + 2 \tilde{\mathcal{D}}_{x}\tilde{\mathcal{D}}^{-1}\tilde{\mathcal{D}}_{x}\left(\mathcal{A}\w_{x}+\mathcal{B}\w + \mathcal{C}\right) \d t-2\tilde{\mathcal{D}}_{x}\tilde{\mathcal{D}}^{-1}\tilde{\mathcal{D}}_{x}\left(\mathcal{N} \d t + \left[\begin{array}{c}0  \\ \mathbb{F}\d  W\end{array}\right]\right) \notag\\
 &-2\tilde{\mathcal{D}}_{x}\tilde{\mathcal{D}}^{-1}\tilde{\mathcal{A}}\w_{xx}\d t-2\tilde{\mathcal{D}}_{x}\tilde{\mathcal{D}}^{-1}\left(\left(\tilde{\mathcal{A}}_{x}+\tilde{\mathcal{B}}\right)\w_{x}+\tilde{\mathcal{B}}_{x}\w + \tilde{\mathcal{C}}_{x}\right) \d t+ \tilde{\mathcal{N}}_{x} \d t \notag\\
& + 2\tilde{\mathcal{D}}_{x}\tilde{\mathcal{D}}^{-1}\left( \tilde{\mathcal{D}}_{x}\left[\begin{array}{c}0  \\ \mathbb{F}\d  W\end{array}\right] + \tilde{\mathcal{D}} \left[\begin{array}{c}0  \\ \mathbb{F}_{x}\d  W\end{array}\right]\right)\\
 &+ \tilde{\mathcal{D}}_{xx} \left(-\mathcal{A}\w_{x}-\mathcal{B}\w - \mathcal{C} + \mathcal{N}\right) \d t + \tilde{\mathcal{D}}_{xx}\left[\begin{array}{c}0  \\ \mathbb{F}\d  W\end{array}\right]  \notag \\
 &+\left(\tilde{\mathcal{A}}_{x}\w_{xx}+\tilde{\mathcal{A}}\w_{xxx} + \left(\tilde{\mathcal{A}}_{xx}+\tilde{\mathcal{B}}_{x}\right)\w_{x} +\left(\tilde{\mathcal{A}}_{x}+\tilde{\mathcal{B}}\right)\w_{xx}+\tilde{\mathcal{B}}_{xx}\w + \tilde{\mathcal{B}}_{x}\w_{x} +\tilde{\mathcal{C}}_{xx}\right) \d t \notag \\
=& \tilde{\mathcal{N}}_{xx} \d t  + \tilde{\mathcal{D}}_{xx} \left[\begin{array}{c}0  \\ \mathbb{F}\d  W\end{array}\right] + 2\tilde{\mathcal{D}}_{x} \left[\begin{array}{c}0  \\ \mathbb{F}_{x}\d  W\end{array}\right]+ \tilde{\mathcal{D}}\left[\begin{array}{c}0  \\ \mathbb{F}_{xx}\d  W\end{array}\right]. \notag
\end{align}
Since
\begin{align}
 &\d \left(\tilde{\mathcal{D}}\w_{xx}\right)= \w_{xx} \d \tilde{\mathcal{D}} + \tilde{\mathcal{D}} \d  \w_{xx} + \langle \d  \tilde{\mathcal{D}}, \d  \w_{xx} \rangle \\
= &\left[\begin{array}{c} \left(\d  \tilde{s}\right)\sigma_{xx}\\ 0 \end{array}\right] + \tilde{\mathcal{D}} \d  \w_{xx}+\frac{-2\left(\bar{J}+j\right)}{\left(\bar{\rho}+\sigma\right)^{2}}\mathbb{F}\mathbb{F}_{x}\d t,\notag
 \end{align}
by It\^o's formula, it holds that
 \begin{align}\label{2 order balance}
& \d \left(\tilde{\mathcal{D}}\w_{xx}\cdot \w_{xx}\right) =  \w_{xx}\d \left(\tilde{\mathcal{D}} \w_{xx}\right)+2\tilde{\mathcal{D}} \w_{xx} \d  \w_{xx}+\left\langle \d  \left(\tilde{\mathcal{D}} \w_{xx}\right), \d  \w_{xx} \right\rangle\notag\\
= & \d \tilde{\mathcal{D}} \left|\w_{xx}\right|^{2}+ 2 \w_{xx}\tilde{\mathcal{D}} \d  \w_{xx}+ \frac{-2\left(\bar{J}+j\right)}{\left(\bar{\rho}+\sigma\right)^{2}}\mathbb{F}\cdot\mathbb{F}_{x}\d t + \left\langle \d  \left(\tilde{\mathcal{D}} \w_{xx}\right), \d  \w_{xx}\right\rangle \notag\\
=& \d \tilde{\mathcal{D}} \left|\w_{xx}\right|^{2} + 2 \w_{xx}\tilde{\mathcal{D}} \d  \w_{xx}+ \frac{-2\left(\bar{J}+j\right)}{\left(\bar{\rho}+\sigma\right)^{2}}\mathbb{F}\cdot\mathbb{F}_{x}\d t \notag\\
&+\tilde{r}^{-1}\left\langle 2\tilde{r}_{x}\tilde{r}^{-1}\left(\tilde{r}_{x}\mathbb{F}+\tilde{r}\mathbb{F}_{x} \right)\d  W-2\tilde{r}_{x}\mathbb{F}_{x}\d  W -\tilde{r}\mathbb{F}_{xx}\d  W \right\rangle \\
=& \d \tilde{\mathcal{D}} \left|\w_{xx}\right|^{2} \notag\\
&+ 2 \w_{xx}\left(- 2 \tilde{\mathcal{D}}_{x}\tilde{\mathcal{D}}^{-1}\tilde{\mathcal{D}}_{x}\left(\mathcal{A}\w_{x}+\mathcal{B}\w + \mathcal{C}\right) \d t+2\tilde{\mathcal{D}}_{x}\tilde{\mathcal{D}}^{-1}\left(\tilde{\mathcal{D}}_{x}\mathcal{N} \d t + \tilde{\mathcal{D}}\left[\begin{array}{c}0  \\ \mathbb{F}\d  W\end{array}\right]\right)\right. \notag\\
 &\left.\qquad +2\tilde{\mathcal{D}}_{x}\tilde{\mathcal{D}}^{-1}\tilde{\mathcal{A}}\w_{xx}\d t+2\tilde{\mathcal{D}}_{x}\tilde{\mathcal{D}}^{-1}\left(\left(\tilde{\mathcal{A}}_{x}+\tilde{\mathcal{B}}\right)\w_{x}+\tilde{\mathcal{B}}_{x}\w + \tilde{\mathcal{C}}_{x}\right) \d t- \tilde{\mathcal{N}}_{x} \d t \right.\notag\\
&\left. \qquad - 2\tilde{r}_{x}\tilde{r}^{-1}\left(\tilde{r}_{x}\mathbb{F}\d  W + \tilde{r} \mathbb{F}_{x}\d  W\right) - \tilde{\mathcal{D}}_{xx} \left(-\mathcal{A}\w_{x}-\mathcal{B}\w - \mathcal{C} + \mathcal{N}\right) \d t  \right.\notag \\
 &\left.\qquad -\left(\tilde{\mathcal{A}}_{x}\w_{xx}+\tilde{\mathcal{A}}\w_{xxx} + \left(\tilde{\mathcal{A}}_{xx}+\tilde{\mathcal{B}}_{x}\right)\w_{x} +\left(\tilde{\mathcal{A}}_{x}+\tilde{\mathcal{B}}\right)\w_{xx}+\tilde{\mathcal{B}}_{xx}\w + \tilde{\mathcal{B}}_{x}\w_{x} +\tilde{\mathcal{C}}_{xx}\right) \d t \right.\notag \\
& \left.\qquad +\tilde{\mathcal{N}}_{xx} \d t + \tilde{r}_{xx} \mathbb{F}\d  W + 2\tilde{r}_{x} \mathbb{F}_{x}\d  W  + 2\tilde{r} \mathbb{F}_{xx} \d  W\right)\notag\\
& + \frac{-2\left(\bar{J}+j\right)}{\left(\bar{\rho}+\sigma\right)^{2}}\mathbb{F}\cdot\mathbb{F}_{x}\d t +\tilde{r}^{-1}\left\langle 2\tilde{r}_{x}\tilde{r}^{-1}\left(\tilde{r}_{x}\mathbb{F}+\tilde{r}\mathbb{F}_{x} \right)\d  W-2\tilde{r}_{x}\mathbb{F}_{x}\d  W -\tilde{r}\mathbb{F}_{xx}\d  W \right\rangle. \notag
\end{align}
To estimate the terms in form of $\int_{0}^{1} (\cdot)\left|\w_{xx}\right|^{2}\d x $ suffices to consider $\int_{0}^{1}\w_{xx} \tilde{\mathcal{Q}} \w_{xx}\d x $, where  $\tilde{\mathcal{Q}}=-4\tilde{\mathcal{D}}_{x}\tilde{\mathcal{D}}_{-1}\tilde{\mathcal{A}}+4\tilde{\mathcal{A}}_{x}+2\tilde{\mathcal{B}}+\tilde{\mathcal{A}}_{x}$. Actually, we give the reasons as the followings. The identity
\begin{align}
\int_{0}^{1}\left(\frac{1}{2}\tilde{\mathcal{A}}_{x}\w_{xx}+\tilde{\mathcal{A}}\w_{xxx}\right)\w_{xx}\d x = \left.\frac{1}{2}\tilde{\mathcal{A}}\w_{xx}\cdot \w_{xx} \right|_{0}^{1},
\end{align}
holds because
\begin{align}
\tilde{\mathcal{A}}=\tilde{\mathcal{D}}\mathcal{A}=\left[\begin{array}{cc} 0 &\tilde{s} \\ \tilde{s} &\tilde{r}\frac{2\left(\bar{J}+j\right)}{\bar{\rho}+\sigma}\end{array}\right]
\end{align}
is symmetric. We claim that
\begin{align}\label{important claim on boundary}
\left.\tilde{\mathcal{A}}\w_{xx}\cdot \w_{xx} \d t \right|_{0}^{1}=0.
\end{align}
Actually, we use two different ways to represent $\left.\frac{1}{2}\tilde{\mathcal{A}}\w_{xx}\cdot \w_{xx} \right|_{0}^{1}$. We directly calculate
\begin{align}\label{the one hand}
&\left.\tilde{\mathcal{A}}\w_{xx}\cdot \w_{xx} \right|_{0}^{1} = \left.2 \tilde{s}\sigma_{xx} j_{xx}\right|_{0}^{1}+ \left.\tilde{r}\frac{2\left(\bar{J}+j\right)}{\bar{\rho}+\sigma}j_{xx}^{2}\right|_{0}^{1}\\
=& 2 \tilde{s}\sigma_{xx} j_{xx}(1)-2 \tilde{s}\sigma_{xx} j_{xx}(0)+\tilde{r}\frac{2\left(\bar{J}+j\right)}{\bar{\rho}+\sigma}j_{xx}^{2}(1)-\tilde{r}\frac{2\left(\bar{J}+j\right)}{\bar{\rho}+\sigma}j_{xx}^{2}(0).\notag
\end{align}
On the one hand, from the system \eqref{2-deriv 2order estim symmetrized sto Euler-Poisson system contact boundary}, there hold
\begin{align}\label{formula for barbarA wxx}
&\left.\tilde{\mathcal{A}}\w_{xx}\d t\right|_{0}^{1} \notag\\
=&\left.-\tilde{\mathcal{D}}\d \w_{x}\right|_{0}^{1}+ \left.\left(\tilde{\mathcal{D}}_{x}\left(\mathcal{A}\w_{x}+\mathcal{B}\w +\mathcal{C} -\mathcal{N}\right)-\left(\tilde{\mathcal{A}}_{x}+\tilde{\mathcal{B}}\right)\w_{x}-\tilde{\mathcal{C}}_{x}+\tilde{\mathcal{N}}_{x}\right)\d t\right|_{0}^{1}\\
&+\left.\left( \left[\begin{array}{c} 0\\\tilde{r}_{x} \mathbb{F}\d  W\end{array}\right] + \left[\begin{array}{c} 0\\\tilde{r} \mathbb{F}_{x} \d  W
\end{array}\right]\right)\right|_{0}^{1}, \notag
\end{align}
and
\begin{align}\label{the other hand}
&\left.\tilde{\mathcal{A}}\w_{xx}\cdot\w_{xx}\d t\right|_{0}^{1}\notag\\
=& \left.\left(\tilde{\mathcal{D}}_{x}\left(\mathcal{A}\w_{x}+\mathcal{B}\w +\mathcal{C} -\mathcal{N}\right)-\left(\tilde{\mathcal{A}}_{x}+\tilde{\mathcal{B}}\right)\w_{x}-\tilde{\mathcal{C}}_{x}+\tilde{\mathcal{N}}_{x}\right)\cdot\w_{xx}\d t\right|_{0}^{1}\\
&\left.-\tilde{\mathcal{D}}\d \w_{x}\cdot\w_{xx}\right|_{0}^{1}+\left.\left( \left[\begin{array}{c} 0\\\tilde{r}_{x} \mathbb{F}\d  W\end{array}\right] + \left[\begin{array}{c} 0\\\tilde{r} \mathbb{F}_{x} \d  W
\end{array}\right]\cdot\w_{xx}\right)\right|_{0}^{1}.\notag
\end{align}
We deal with the right-hand side in the above formula term by term.
Fortunately, since $\left.\d j_{x}\right|_{0}^{1}=\left.\d \sigma_{t}\right|_{0}^{1}=0$, we have
\begin{align}
\left.-\tilde{\mathcal{D}}\d \w_{x}\cdot\w_{xx}\right|_{0}^{1}=-\left.\tilde{s}\d \sigma_{x}\sigma_{xx}\right|_{0}^{1}-\left.\tilde{r}\d j_{x}j_{xx}\right|_{0}^{1}
=-\left.\tilde{s}\d \sigma_{x}\sigma_{xx}\right|_{0}^{1}=\left.\tilde{s}j_{xx}\sigma_{xx}\d t\right|_{0}^{1}.
\end{align}
By direct calculation, we have
\begin{align}
&\left.\left(\tilde{\mathcal{D}}_{x}\mathcal{A}\w_{x}-\tilde{\mathcal{A}}_{x}\w_{x}\right)\cdot\w_{xx}\d t\right|_{0}^{1}
=-\left.\tilde{\mathcal{D}}\mathcal{A}_{x}\w_{x}\cdot\w_{xx}\d t\right|_{0}^{1}\notag\\
=&-\left. \tilde{\mathcal{D}}\mathcal{A}_{x}\w_{x}\cdot \tilde{\mathcal{A}}^{-1}\left(-\tilde{\mathcal{D}}\d \w_{x}\right)\right|_{0}^{1}\\
 &-\left.\tilde{\mathcal{D}}\mathcal{A}_{x}\w_{x}\cdot \tilde{\mathcal{A}}^{-1}\left(\tilde{\mathcal{D}}_{x}\left(\mathcal{A}\w_{x}+\mathcal{B}\w +\mathcal{C} -\mathcal{N}\right)-\left(\tilde{\mathcal{A}}_{x}+\tilde{\mathcal{B}}\right)\w_{x}-\tilde{\mathcal{C}}_{x}+\tilde{\mathcal{N}}_{x}\right)\d t \right|_{0}^{1}\notag\\
 &\quad \left.-\tilde{\mathcal{D}}\mathcal{A}_{x}\w_{x}\cdot \tilde{\mathcal{A}}^{-1}\left( \left[\begin{array}{c} 0\\\tilde{r}_{x} \mathbb{F}\d  W\end{array}\right] + \left[\begin{array}{c} 0\\\tilde{r} \mathbb{F}_{x} \d  W
\end{array}\right]\right)\right|_{0}^{1},\notag
\end{align}
By direct calculation, since $\left.\d j_{x}\right|_{0}^{1} = \left. \d \sigma_{t}\right|_{0}^{1}=0$, we have
\begin{align}
-\left. \tilde{\mathcal{D}}\mathcal{A}_{x}\w_{x}\cdot \tilde{\mathcal{A}}^{-1}\left(-\tilde{\mathcal{D}}\d \w_{x}\right)\right|_{0}^{1} = 0.
\end{align}
We calculate
\begin{align}
&-\left.\tilde{\mathcal{D}}\mathcal{A}_{x}\w_{x}\cdot \tilde{\mathcal{A}}^{-1}\left(\tilde{\mathcal{D}}_{x}\left(\mathcal{A}\w_{x}+\mathcal{B}\w +\mathcal{C} -\mathcal{N}\right)-\left(\tilde{\mathcal{A}}_{x}+\tilde{\mathcal{B}}\right)\w_{x}-\tilde{\mathcal{C}}_{x}+\tilde{\mathcal{N}}_{x}\right)\d t \right|_{0}^{1}\notag\\
=&\left.\tilde{\mathcal{D}}\mathcal{A}_{x}\w_{x}\cdot \left(\mathcal{A}^{-1}\mathcal{A}_{x}\w_{x}\right)\d t\right|_{0}^{1}+\left.\tilde{\mathcal{D}}\mathcal{A}_{x}\w_{x}\cdot \left(\tilde{A}^{-1}\tilde{\mathcal{B}}\w_{x}\right)\d t\right|_{0}^{1}\\
&+\left.\tilde{\mathcal{D}}\mathcal{A}_{x}\w_{x}\cdot \tilde{\mathcal{A}}^{-1}\left(\tilde{\mathcal{D}}_{x}\left(\mathcal{B}\w +\mathcal{C} -\mathcal{N}\right)-\tilde{\mathcal{C}}_{x}+\tilde{\mathcal{N}}_{x}\right)\d t\right|_{0}^{1},\notag
\end{align}
\begin{align}
\left.\tilde{\mathcal{D}}\mathcal{A}_{x}\w_{x}\cdot \left(\mathcal{A}^{-1}\mathcal{A}_{x}\w_{x}\right)\d t\right|_{0}^{1}=0,
\end{align}
\begin{align}
\left.\tilde{\mathcal{D}}\mathcal{A}_{x}\w_{x}\cdot \left(\mathcal{A}^{-1}\tilde{\mathcal{B}}\w_{x}\right)\d t\right|_{0}^{1}=0,
\end{align}
and
\begin{align}
&-\left.\tilde{\mathcal{D}}\mathcal{A}_{x}\w_{x}\cdot \tilde{\mathcal{A}}^{-1}\left(\tilde{\mathcal{D}}_{x}\left(\mathcal{B}\w +\mathcal{C} -\mathcal{N}\right)-\tilde{\mathcal{C}}_{x}+\tilde{\mathcal{N}}_{x}\right)\d t\right|_{0}^{1}\\
=&-\left.\tilde{\mathcal{D}}\mathcal{A}_{x}\w_{x}\cdot \tilde{\mathcal{A}}^{-1}\left(\tilde{\mathcal{D}}_{x}\mathcal{B}\w-\tilde{\mathcal{D}}\mathcal{C}_{x}+\tilde{\mathcal{D}}\mathcal{N}_{x}\right)\d t\right|_{0}^{1}=0.\notag
\end{align}
Similarly, we have
\begin{align}
&-\left.\tilde{\mathcal{D}}\mathcal{A}_{x}\w_{x}\cdot \tilde{\mathcal{A}}^{-1}\left( \left[\begin{array}{c} 0\\\tilde{r}_{x} \mathbb{F}\d  W\end{array}\right] + \left[\begin{array}{c} 0\\ \tilde{r} \mathbb{F}_{x} \d  W
\end{array}\right]\right)\right|_{0}^{1}=0.
\end{align}
Therefore, we have
\begin{align}
\left.\left(\tilde{\mathcal{D}}_{x}\mathcal{A}\w_{x}-\tilde{\mathcal{A}}_{x}\w_{x}\right)\cdot\w_{xx}\d t\right|_{0}^{1}=0.
\end{align}
We calculate
\begin{align}
&\left.\left(\tilde{\mathcal{D}}_{x}\mathcal{B}\w-\tilde{\mathcal{D}}\mathcal{B}\w_{x}\right)\cdot\w_{xx}\d t\right|_{0}^{1}\\
=&\left.\tilde{\mathcal{D}}_{x}\mathcal{B}\w\cdot\w_{xx}\d t\right|_{0}^{1}-\left.\tilde{\mathcal{D}}\mathcal{B}\w_{x}\cdot\w_{xx}\right|_{0}^{1}=\left.\tilde{\mathcal{D}}_{x}\mathcal{B}\w\cdot\w_{xx}\d t\right|_{0}^{1},\notag
\end{align}
since $\left.j_{x}\right|_{0}^{1}=0$. Similarly, the following holds:
\begin{align}
 &\left.\tilde{\mathcal{D}}_{x}\mathcal{B}\w\cdot\w_{xx}\d t\right|_{0}^{1}\notag\\
=&-\left. \tilde{\mathcal{D}}_{x}\mathcal{B}\w\cdot \tilde{\mathcal{A}}^{-1}\left(-\tilde{\mathcal{D}}\d \w_{x}\right)\d t\right|_{0}^{1}\notag\\
 &-\left.\tilde{\mathcal{D}}_{x}\mathcal{B}\w\cdot \tilde{\mathcal{A}}^{-1}\left(\tilde{\mathcal{D}}_{x}\left(\mathcal{A}\w_{x}+\mathcal{B}\w +\mathcal{C} -\mathcal{N}\right)-\left(\tilde{\mathcal{A}}_{x}+\tilde{\mathcal{B}}\right)\w_{x}-\tilde{\mathcal{C}}_{x}+\tilde{\mathcal{N}}_{x}\right)\d t \right|_{0}^{1}\\
 &\quad \left.-\tilde{\mathcal{D}}_{x}\mathcal{B}\w\cdot \tilde{\mathcal{A}}^{-1}\left( \left[\begin{array}{c} 0\\\tilde{r}_{x} \mathbb{F}\d  W\end{array}\right] + \left[\begin{array}{c} 0\\\tilde{r} \mathbb{F}_{x} \d  W
\end{array}\right]\right)\right|_{0}^{1}. \notag
\end{align}
Since $\left.j_{x}\right|_{0}^{1}=0$, we have
\begin{align}
-\left. \tilde{\mathcal{D}}_{x}\mathcal{B}\w\cdot \tilde{\mathcal{A}}^{-1}\left(-\tilde{\mathcal{D}}\d \w_{x}\right)\d t\right|_{0}^{1}= 0,\\
-\left. \tilde{\mathcal{D}}_{x}\mathcal{B}\w\cdot \tilde{\mathcal{A}}^{-1}\left(\tilde{\mathcal{D}}_{x}\mathcal{A}\w_{x}-\tilde{\mathcal{A}}_{x}\w_{x}\right)\d t\right|_{0}^{1}= 0,\\
-\left. \tilde{\mathcal{D}}_{x}\mathcal{B}\w\cdot \tilde{\mathcal{A}}^{-1}\tilde{\mathcal{B}}\w_{x}\d t\right|_{0}^{1}=0,
\end{align}
and
\begin{align}
&-\left.\tilde{\mathcal{D}}_{x}\mathcal{B}\w\cdot \tilde{\mathcal{A}}^{-1}\left(\tilde{\mathcal{D}}_{x}\left(\mathcal{B}\w +\mathcal{C} -\mathcal{N}\right)-\tilde{\mathcal{C}}_{x}+\tilde{\mathcal{N}}_{x}\right)\d t\right|_{0}^{1}\notag\\
=&-\left.\tilde{\mathcal{D}}_{x}\mathcal{B}\w\cdot \tilde{\mathcal{A}}^{-1}\left(\tilde{\mathcal{D}}_{x}\mathcal{B}\w-\tilde{\mathcal{D}}\mathcal{C}_{x}+\tilde{\mathcal{D}}\mathcal{N}_{x}\right)\d t\right|_{0}^{1}=0.
\end{align}
Similarly, it holds that
\begin{align}
\left.-\int_{0}^{t}\tilde{\mathcal{D}}_{x}\mathcal{B}\w\cdot \tilde{\mathcal{A}}^{-1}\left( \left[\begin{array}{c} 0\\\tilde{r}_{x} \mathbb{F}\d  W\end{array}\right] + \left[\begin{array}{c} 0\\\tilde{r} \mathbb{F}_{x} \d  W
\end{array}\right]\right)\right|_{0}^{1}=0.
\end{align}
Hence, we have
\begin{align}
\left.\left(\tilde{\mathcal{D}}_{x}\mathcal{B}\w\right)\cdot\w_{xx}\d t\right|_{0}^{1}=0,
\end{align}
\begin{align}
\left.\left(\tilde{\mathcal{D}}_{x}\mathcal{C}-\tilde{\mathcal{C}}_{x}\right)\cdot\w_{xx}\d t\right|_{0}^{1}=0,\\
\left.\left(-\tilde{\mathcal{D}}_{x}\mathcal{N}+\tilde{\mathcal{N}}_{x}\right)\cdot\w_{xx}\d t\right|_{0}^{1}=0,
\end{align}
and
\begin{align}\label{sto times wxx is zero}
&\left.\left( \left[\begin{array}{c} 0\\\tilde{r}_{x} \mathbb{F}\d  W\end{array}\right] + \left[\begin{array}{c} 0\\\tilde{r} \mathbb{F}_{x} \d  W
\end{array}\right]\cdot\w_{xx}\right)\right|_{0}^{1} \\
=&\left.\left( \left[\begin{array}{c} 0\\\tilde{r}_{x} \mathbb{F}\d  W\end{array}\right] + \left[\begin{array}{c} 0\\\tilde{r} \mathbb{F}_{x} \d  W
\end{array}\right]\right)\cdot \tilde{\mathcal{A}}^{-1}\left(\tilde{\mathcal{D}}_{x}\mathcal{B}\w-\tilde{\mathcal{D}}\mathcal{C}_{x}+\tilde{\mathcal{D}}\mathcal{N}_{x}\right)\right|_{0}^{1}=0. \notag
\end{align}
Therefore, we have
\begin{align}\label{one hand relation}
\left. \tilde{s}\sigma_{xx} j_{xx}\d t\right|_{0}^{1}+ \left.  \tilde{r}\frac{2\left(\bar{J}+j\right)}{\bar{\rho}+\sigma}j_{xx}^{2}\d t\right|_{0}^{1}=0.
\end{align}
On the other hand,
we deal with $\left.-\tilde{\mathcal{D}}\d \w_{x}\cdot\w_{xx}\right|_{0}^{1}$ in \eqref{the other hand} by another way, and other terms in \eqref{the other hand} are shown as before, which are proved equal to $0$. 
From \eqref{formula for barbarA wxx}, we have
\begin{align}
&-\tilde{\mathcal{D}}\d \w_{x}\cdot\w_{xx}= -\tilde{\mathcal{D}}\d \w_{x}\cdot\frac{\w_{xx}\d t}{\d t}\notag\\
=&-\tilde{\mathcal{D}}\d \w_{x}\cdot\frac{-\tilde{\mathcal{A}}^{-1}\tilde{\mathcal{D}}\d \w_{x} }{\d t}\\
&-\tilde{\mathcal{D}}\d \w_{x}\cdot\frac{-\tilde{\mathcal{A}}^{-1}\left(\tilde{\mathcal{D}}_{x}\left(\mathcal{A}\w_{x}+\mathcal{B}\w +\mathcal{C} -\mathcal{N}\right)-\left(\tilde{\mathcal{A}}_{x}+\tilde{\mathcal{B}}\right)\w_{x}-\tilde{\mathcal{C}}_{x}
+\tilde{\mathcal{N}}_{x}\right)\d t}{\d t}\notag\\
&-\tilde{\mathcal{D}}\d \w_{x}\cdot\frac{\tilde{\mathcal{A}}^{-1}\left( \left[\begin{array}{c} 0\\\tilde{r}_{x} \mathbb{F}\d  W\end{array}\right] + \left[\begin{array}{c} 0\\\tilde{r} \mathbb{F}_{x} \d  W
\end{array}\right]\right) }{\d t}.\notag
\end{align}
Since $\left.j_{x}\right|_{0}^{1}=0$ and \eqref{sto times wxx is zero}, we have
\begin{align}
&\left.-\tilde{\mathcal{D}}\d \w_{x}\right|_{0}^{1}\cdot\frac{\left.\tilde{\mathcal{A}}^{-1}\left( \left[\begin{array}{c} 0\\\tilde{r}_{x} \mathbb{F}\d  W\end{array}\right] + \left[\begin{array}{c} 0\\\tilde{r} \mathbb{F}_{x} \d  W
\end{array}\right]\right)\right|_{0}^{1}}{\d t} \notag \\
=& \left. \frac{-\tilde{s} \d \sigma_{x} \frac{1}{\tilde{s}}\left(\tilde{r}_{x} \mathbb{F}\d  W+\tilde{r} \mathbb{F}_{x} \d  W\right)}{\d t}\right|_{0}^{1}=\left. j_{xx}\left(\tilde{r}_{x} \mathbb{F}\d  W+\tilde{r} \mathbb{F}_{x} \d  W\right)\right|_{0}^{1}\\
=&\left.\left( \left[\begin{array}{c} 0\\\tilde{r}_{x} \mathbb{F}\d  W\end{array}\right] + \left[\begin{array}{c} 0\\\tilde{r} \mathbb{F}_{x} \d  W
\end{array}\right]\cdot\w_{xx}\right)\right|_{0}^{1}=0 \notag .
\end{align}
By the previous calculation, there holds
\begin{align}
&\left.-\tilde{\mathcal{D}}\d \w_{x}\cdot\frac{-\tilde{\mathcal{A}}^{-1}\left(\tilde{\mathcal{D}}_{x}\left(\mathcal{A}\w_{x}+\mathcal{B}\w +\mathcal{C} -\mathcal{N}\right)-\left(\tilde{\mathcal{A}}_{x}+\tilde{\mathcal{B}}\right)\w_{x}-\tilde{\mathcal{C}}_{x}+\tilde{\mathcal{N}}_{x}\right)\d t}{\d t}\right|_{0}^{1}\notag\\
=&\left. -\tilde{s} \d \sigma_{x} \frac{1}{\tilde{s}}\tilde{r}_{x}\left(\frac{-2\bar{J}}{\bar{\rho}^{2}}\bar{\rho}_{x}+1\right) j+ \tilde{s}\d \w_{x}\left(-\mathcal{C}_{x}+\mathcal{N}_{x}\right) \right|_{0}^{1}\\
=&\left.\tilde{\mathcal{D}}_{x}\mathcal{B}\w\cdot \w_{xx}\right|_{0}^{1}+\left.\left(-\mathcal{C}_{x}+\mathcal{N}_{x}\right)\cdot \w_{xx} \right|_{0}^{1}=0.\notag
\end{align}
By the mass conservation equation, it holds that
\begin{align}
\left.-\tilde{\mathcal{D}}\d \w_{x}\cdot\frac{-\tilde{\mathcal{A}}^{-1}\tilde{\mathcal{D}}\d \w_{x} }{\d t}\right|_{0}^{1}=\left.\frac{2\tilde{r}\d \sigma_{x}\frac{\bar{J}+j}{\bar{\rho}+\sigma}\d \sigma_{x}}{\d t}\right|_{0}^{1}=2\left.\tilde{r}\frac{\bar{J}+j}{\bar{\rho}+\sigma}j_{xx}^{2}\d t \right|_{0}^{1}.
\end{align}
Hence, from \eqref{the one hand} there holds
\begin{align}
\left. 2\tilde{s}\sigma_{xx} j_{xx}\d t\right|_{0}^{1}\d t =0.
\end{align}
Together with \eqref{one hand relation}, we obtain that
\begin{align}
\left.  \tilde{r}\frac{2\left(\bar{J}+j\right)}{\bar{\rho}+\sigma}j_{xx}^{2}\d t\right|_{0}^{1}=0,\\
\left. 2\tilde{s}\sigma_{xx} j_{xx}\d t\right|_{0}^{1}\d t =0.
\end{align}
We proved the claim.

For $\tilde{\mathcal{Q}}=-4\tilde{\mathcal{D}}_{x}\tilde{\mathcal{D}}_{-1}\tilde{\mathcal{A}}+4\tilde{\mathcal{A}}_{x}+2\tilde{\mathcal{B}}+\tilde{\mathcal{A}}_{x}$, we directly calculate
\begin{align}
\tilde{\mathcal{Q}}=\left[\begin{array}{cc}0 & q_{12}\\ q_{21} & q_{22} \end{array}\right],
\end{align}
where $q_{22}=2\tilde{r}_{x}\frac{\bar{J}+j}{\bar{\rho}+\sigma}+2\tilde{r}\left(1-\frac{2\bar{J}}{\bar{\rho}^{2}}\bar{\rho}_{x}+ 5 \left(\frac{\bar{J}+j}{\bar{\rho}+\sigma}\right)_{x}\right)$, and
\begin{align}
q_{12}+q_{21}=&5\tilde{s}_{x}-4+\tilde{r}_{x}\left(P'\left(\bar{\rho}+\sigma\right)-\frac{\left(\bar{J}+j\right)^{2}}{\left(\bar{\rho}+\sigma\right)^{2}}\right)
              +5\tilde{r}\left(P'\left(\bar{\rho}+\sigma\right)-\frac{\left(\bar{J}+j\right)^{2}}{\left(\bar{\rho}+\sigma\right)^{2}}\right)_{x}\\
              &+2\tilde{r}\left(\frac{-2\bar{J}^{2}}{\bar{\rho}^{3}}\bar{\rho}_{x}+P''\left(\bar{\rho}\right)\bar{\rho}_{x}-\bar{E}\right).\notag
\end{align}
We do Taylor's expectation to the above formula,
\begin{align}
q_{12}+q_{21}=&6\tilde{r}_{x}\left(P'\left(\bar{\rho}\right)-\left(\frac{\bar{J}}{\bar{\rho}}\right)^{2}\right)\notag\\
&+\tilde{r}\left(10\left(P'\left(\bar{\rho}\right)-\left(\frac{\bar{J}}{\bar{\rho}}\right)^{2}\right)_{x}-\frac{4\bar{J}^{2}}{\bar{\rho}^{3}}\bar{\rho}_{x}+2P''\left(\bar{\rho}\right)\bar{\rho}_{x}-2\bar{E}\right)\\
&-4+ O\left(\left|\w\right|+\left|\w_{x}\right|\right).\notag
\end{align}
We need $q_{12}+q_{21}=O\left(\left|\w\right|+\left|\w_{x}\right|\right)$, so we let $\tilde{r}$ satisfy that
\begin{align}
&6\tilde{r}_{x}\left(P'\left(\bar{\rho}\right)-\left(\frac{\bar{J}}{\bar{\rho}}\right)^{2}\right)\\
&+\tilde{r}\left(10\left(P'\left(\bar{\rho}\right)-\left(\frac{\bar{J}}{\bar{\rho}}\right)^{2}\right)_{x}
-\frac{4\bar{J}^{2}}{\bar{\rho}^{3}}\bar{\rho}_{x}+2P''\left(\bar{\rho}\right)\bar{\rho}_{x}-2\bar{E}\right)-4=0.\notag
\end{align}
We denote the above equation as
\begin{align}
\tilde{r}_{x}+G\tilde{r}+M=0,
\end{align}
where
\begin{align}
G=\frac{1}{3}\left(P'\left(\bar{\rho}\right)-\left(\frac{\bar{J}}{\bar{\rho}}\right)^{2}\right)^{-1}\left(5\left(P'\left(\bar{\rho}\right)-\left(\frac{\bar{J}}{\bar{\rho}}\right)^{2}\right)_{x}
-\frac{2\bar{J}^{2}}{\bar{\rho}^{3}}\bar{\rho}_{x}+P''\left(\bar{\rho}\right)\bar{\rho}_{x}-\bar{E}\right),
\end{align}
\begin{align}
M=-\frac{2}{3}\left(P'\left(\bar{\rho}\right)-\left(\frac{\bar{J}}{\bar{\rho}}\right)^{2}\right)^{-1}.
\end{align}
Solving the ordinary differential equation, we get
\begin{align}
\tilde{r}=\tilde{r}(0) e^{-\int_{0}^{x}G\d x}-\int_{0}^{x}M\d x.
\end{align}
where $-M>0$. With $\tilde{r}(0)$ being chosen as some positive constant, $\tilde{r}$ has positive lower bound. So does $\tilde{s}$. Hence, we have
\begin{align}
\tilde{\mathcal{Q}}\w_{xx}\cdot\w_{xx}\geqslant \zeta_{9}j_{xx}^{2}+ O\left(\left|\w\right|+\left|\w_{x}\right|\right)\left|\w_{xx}\right|^{2}.
\end{align}
As in the deterministic case, we estimate
\begin{align}
&\int_{0}^{1}\tilde{\mathcal{D}}_{x}\tilde{\mathcal{D}}^{-1}\tilde{\mathcal{D}}_{x}\mathcal{B}\w\cdot \w_{xx}\d x-\int_{0}^{1}\tilde{\mathcal{D}}_{xx}\mathcal{B}\w\cdot \w_{xx}\d x-\int_{0}^{1}\tilde{\mathcal{D}}_{x}\mathcal{B}_{x}\w\cdot \w_{xx}\d x \\
\ls &  C\left(\int_{0}^{1}\left|j\right|^{2}\d x+\int_{0}^{1}\left|j_{x}\right|^{2}\d x\right).\notag
\end{align}
For the terms involving $\mathcal{C}=\left[\begin{array}{c}0\\-\bar{\rho} \tilde{e}\end{array}\right]$, it holds that
\begin{align}
&\int_{0}^{1}2\tilde{\mathcal{D}}_{x}\left(\tilde{\mathcal{D}}^{-1}\tilde{\mathcal{D}}_{x}\mathcal{C}-\tilde{\mathcal{D}}^{-1}\tilde{\mathcal{C}}_{x}\right)\w_{xx}\d x-\int_{0}^{1}\tilde{\mathcal{D}}_{xx}\mathcal{C}\w_{xx}\d x + \int_{0}^{1}\tilde{\mathcal{C}}_{xx}\w_{xx}\d x\\
=&\frac{\partial }{\partial t}\int_{0}^{1}\left(-r \left(\bar{\rho}_{xx}\tilde{e}+2 \bar{\rho}_{x}\sigma\right)\sigma_{x}-\frac{1}{2}r\left|\sigma_{x}\right|^{2}\right)\d x+ \int_{0}^{1}r\left(\bar{\rho}_{xx}\left(-j\right)+2 \bar{\rho}_{x}\left(-j_{x}\right)\right)\sigma_{x}\d x,\notag
\end{align}
in which $\int_{0}^{1}r\left(\bar{\rho}_{xx}\left(-j\right)+2 \bar{\rho}_{x}\left(-j_{x}\right)\right)\sigma_{x}\d x \ls C\left(\int_{0}^{1}j^{2}\d x+\int_{0}^{1}\sigma_{x}^{2}\d x+\int_{0}^{1}j_{x}^{2}\d x\right)$.
For the terms with $\mathcal{N}=\left[\begin{array}{c}0\\ O\left(\left|\w\right|^{2}+\left|\tilde{e}\right|^{2}+\left|\bar{J}\right|\left|\w\right|\right)\end{array}\right]$, we estimate
\begin{align}
&\int_{0}^{1}\tilde{\mathcal{D}}_{x}\tilde{\mathcal{D}}^{-1}\tilde{\mathcal{N}}_{x}\w_{xx}\d x- \int_{0}^{1}\tilde{\mathcal{D}}_{xx} \mathcal{N} \w_{xx}\d x- \int_{0}^{1}\tilde{\mathcal{D}}_{x} \mathcal{N}_{x} \w_{xx}\d x-\int_{0}^{1}\tilde{\mathcal{N}}_{xx}\w_{xx}\d x\notag\\
\ls &\left\|\w\right\|_{2}^{3}+\left| \bar{J} \right| \left\|\w\right\|_{2}^{2}.
\end{align}
The first term in \eqref{2 order balance} is
\begin{align}
& \int_{0}^{1} \left(( \d \tilde{s}) \left|\sigma_{xx}\right|^{2}+ (\d  \tilde{r}) \left|j_{xx}\right|^{2}\right)\d x \notag\\
=& \int_{0}^{1} \left(P''\left(\bar{\rho}+\sigma\right)\d  \sigma - \frac{2\left(\bar{J}+j\right)}{\left(\bar{\rho}+\sigma\right)^{2}}\d  j+2\frac{\left(\bar{J}+j\right)^{2}}{\left(\bar{\rho}+\sigma\right)^{3}}\d  \sigma\right)\tilde{r} \left|\sigma_{xx}\right|^{2}\d x \d t \\
=& \int_{0}^{1} -\left(P''\left(\bar{\rho}+\sigma\right)+\frac{2\left(\bar{J}+j\right)^{2}}{\left(\bar{\rho}+\sigma\right)^{3}}\right)\left(\bar{J}+j\right)_{x})\tilde{r} \left|\sigma_{xx}\right|^{2}\d x \d t-\int_{0}^{1}\frac{2\left(\bar{J}+j\right)}{\left(\bar{\rho}+\sigma\right)^{2}}\left(\d  j\right)\tilde{r} \left|\sigma_{xx}\right|^{2}\d x\notag\\
\ls & O\left(\left\|\w\right\|_{2}^{3}\right)\d t- \int_{0}^{1}\frac{2\left(\bar{J}+j\right)}{\left(\bar{\rho}+\sigma\right)^{2}}\left(\d  j\right)\tilde{r} \left|\sigma_{xx}\right|^{2}\d x.\notag
\end{align}
By \eqref{1-d system with contact boundary}, we know that
\begin{align}
\d  j+\left(\mathcal{A}_{21}\sigma_{x}+\mathcal{A}_{22} j_{x}+\mathcal{B}_{21}\sigma+\mathcal{B}_{22}j-\bar{\rho}\tilde{e}-O\left(\left|\w\right|^{2}+\left|\tilde{e}\right|^{2}+\left|\bar{J}\right|\left|\w\right|\right)\right)\d t =\mathbb{F}\d  W.
\end{align}
So we estimate
\begin{align}
- \int_{0}^{1}\frac{2\left(\bar{J}+j\right)}{\left(\bar{\rho}+\sigma\right)^{2}}\left(\d  j\right)\tilde{r} \left|\sigma_{xx}\right|^{2}\d x \ls  C \left\|\w\right\|_{2}^{3} +C\left|\int_{0}^{1}\frac{\left(\bar{J}+j\right)}{\left(\bar{\rho}+\sigma\right)^{2}}J^{2}\tilde{r} \left|\w_{xx}\right|^{2}\d x \d  W \right|.
\end{align}
By Burkholder-Davis-Gundy's inequality, we have
\begin{align}
&\mathbb{E}\left[\left|\int_{0}^{t}\int_{0}^{1}\frac{\left(\bar{J}+j\right)}{\left(\bar{\rho}+\sigma\right)^{2}}J^{2}\tilde{r} \left|\w_{xx}\right|^{2}\d x \d  W \right|^{m}\right]\notag\\
\ls & \mathbb{E}\left[\left(C\int_{0}^{t}\int_{0}^{1}\left|\frac{\left(\bar{J}+j\right)}{\left(\bar{\rho}+\sigma\right)^{2}}J^{2}\tilde{r} \left|\w_{xx}\right|^{2}\d x \right|^{2}\d s\right)^{\frac{m}{2}}\right] \\
=& \mathbb{E}\left[\left(C\int_{0}^{t}\int_{0}^{1}\left|\frac{\left(\bar{J}^{3}+3\bar{J}^{2}j+3 \bar{J}j^{2}+ +j^{3}\right)}{\left(\bar{\rho}+\sigma\right)^{2}}\tilde{r} \left|\w_{xx}\right|^{2}\d x \right|^{2}\d s\right)^{\frac{m}{2}}\right]. \notag
\end{align}
We estimate term by term as follows:
\begin{align}
  & \mathbb{E}\left[\left(\int_{0}^{t}\left|\int_{0}^{1}\frac{ \bar{J}^{3}}{\left(\bar{\rho}+\sigma\right)^{2}}\tilde{r} \left|\w_{xx}\right|^{2}\d x\right|^{2}\d s\right)^{\frac{m}{2}} \right]\notag\\
\ls &\mathbb{E}\left[\left(C\int_{0}^{t}\left| \bar{J} \right|^{6}\left\|\w_{xx}\right\|^{4}\d s\right)^{\frac{m}{2}} \right]
\ls  \mathbb{E}\left[\left(\left| \bar{J} \right|^{5}\sup\limits_{s\in[0,t]} \left\|\w_{xx}\right\|^{2}
   C\int_{0}^{t}\left| \bar{J} \right|\left\|\w_{xx}\right\|^{2}\d s\right)^{\frac{m}{2}} \right]\\
\ls & \mathbb{E}\left[\left(\left| \bar{J} \right|^{5}\sup\limits_{s\in[0,t]} \left\|\w_{xx}\right\|^{2}\right)^{m}\right]+ \mathbb{E}\left[\left(C\int_{0}^{t}\left| \bar{J} \right|\left\|\w\right\|_{2}^{2} \d s\right)^{m}\right],\notag
\end{align}
where $ \left|\bar{J} \right|^{5}$ is small enough such that the first term can be balanced by the left side of this second-order estimates; there hold
\begin{align}
  & \mathbb{E}\left[\left(\int_{0}^{t}\left|\int_{0}^{1}\frac{ \bar{J}^{2}j}{\left(\bar{\rho}+\sigma\right)^{2}}\tilde{r} \left|\w_{xx}\right|^{2}\d x\right|^{2}\d s\right)^{\frac{m}{2}}\right]\notag\\
\ls &\mathbb{E}\left[\left(C\int_{0}^{t}\left| \bar{J} \right|^{4}\left\|j\right\|_{\infty}^{2}\left\|\w_{xx}\right\|^{4}\d s\right)^{\frac{m}{2}} \right]\\
\ls & \mathbb{E}\left[\left(C\sup_{s\in[0, t]}\left| \bar{J} \right\|^{4}\left\|j\right\|_{\infty}^{2}\right)^{\frac{m}{2}}
     \left(\int_{0}^{t}\left\|\w_{xx}\right\|^{4}\d s\right)^{\frac{m}{2}} \right]\notag\\
\ls &\left(\frac{\vartheta_{7}}{3}\right)^{m}\mathbb{E}\left[\sup_{s\in[0, t]}\left\|\w_{x}\right\|^{2m}\right] +\mathbb{E}\left[\left(C_{\vartheta_{7}}\int_{0}^{t}\left\|\w\right\|_{2}^{3} \d s\right)^{m}\right],\notag
\end{align}
\begin{align}
  & \mathbb{E}\left[\left(\int_{0}^{t}\left|\int_{0}^{1}\frac{\bar{J}j^{2}}{\left(\bar{\rho}+\sigma\right)^{2}}\tilde{r} \left|\w_{xx}\right|^{2}\d x\right|^{2}\d s\right)^{\frac{m}{2}}\right]\notag\\
\ls & \mathbb{E}\left[\left(C\int_{0}^{t}\left| \bar{J} \right|^{2}\left\|j\right\|_{\infty}^{4}\left\|\w_{xx}\right\|^{4}\d s\right)^{\frac{m}{2}} \right]\\
\ls &\mathbb{E}\left[\left(C\sup_{s\in[0, t]}\left| \bar{J} \right|^{2}\left\|j\right\|_{\infty}^{4}\right)^{\frac{m}{2}}
     \left(\int_{0}^{t}\left\|\w_{xx}\right\|^{4}\d s\right)^{\frac{m}{2}} \right]\notag\\
\ls & \left(\frac{\vartheta_{7}}{3}\right)^{m}\mathbb{E}\left[\sup_{s\in[0, t]}\left\|\w_{x}\right\|^{2m}\right] +\mathbb{E}\left[\left(C_{\vartheta_{7}}\int_{0}^{t}\left\|\w\right\|_{2}^{3} \d s\right)^{m}\right],\notag
\end{align}
and
\begin{align}
  & \mathbb{E}\left[\left(\int_{0}^{t}\left|\int_{0}^{1}\frac{j^{3}}{\left(\bar{\rho}+\sigma\right)^{2}}\tilde{r} \left|\w_{xx}\right|^{2}\d x\right|^{2}\d s\right)^{\frac{m}{2}}\right]\notag\\
\ls & \mathbb{E}\left[\left(C\int_{0}^{t}\left\|j\right\|_{\infty}^{6}\left\|\w_{xx}\right\|^{4}\d s\right)^{\frac{m}{2}} \right]\\
\ls & \mathbb{E}\left[\left(C\sup_{s\in[0, t]}\left\|j\right\|_{\infty}^{6}\right)^{\frac{m}{2}}
     \left(\int_{0}^{t}\left\|\w_{xx}\right\|^{4}\d s\right)^{\frac{m}{2}} \right]\notag\\
\ls & \left(\frac{\vartheta_{7}}{3}\right)^{m}\mathbb{E}\left[\sup_{s\in[0, t]}\left\|\w_{x}\right\|^{2m}\right] +\mathbb{E}\left[\left(C_{\vartheta_{7}}\int_{0}^{t}\left\|\w\right\|_{2}^{3} \d s\right)^{m}\right].\notag
\end{align}
Let $\vartheta_{7}$ be such that $\vartheta_{7}^{m}\mathbb{E}\left[\sup_{s\in[0, t]}\left\|\w_{x}\right\|^{2m}\right]$ can be bounded by the first-order estimates.
The covariation in \eqref{2 order balance} is
\begin{align}
&\tilde{r}^{-1}\left\langle 2\tilde{r}_{x}\tilde{r}^{-1}\left(\tilde{r}_{x}\mathbb{F}+\tilde{r}\mathbb{F}_{x}\right)\d  W - 2 \tilde{r}_{x}\mathbb{F}_{x}\d  W -\tilde{r}\mathbb{F}_{xx}\d  W \right\rangle\notag\\
=&\tilde{r}^{-1}\left|2 \tilde{r}_{x}\tilde{r}^{-1}\left(\tilde{r}_{x}\mathbb{F}+\tilde{r}\mathbb{F}_{x} \right)\right|^{2}\d t-8\tilde{r}_{x}\tilde{r}^{-1}\left(\tilde{r}_{x}\mathbb{F}+\tilde{r}\mathbb{F}_{x}\right)\tilde{r}_{x}\mathbb{F}_{x} \d t\notag\\
 & - 4\tilde{r}_{x}\tilde{r}^{-1}\left(\tilde{r}_{x}\mathbb{F}+\tilde{r}\mathbb{F}_{x}\right)\tilde{r}\mathbb{F}_{xx} \d t -4\tilde{r}_{x}\mathbb{F}_{x}\tilde{r}\mathbb{F}_{xx} \d t+ 4\tilde{r}_{x}^{2}\mathbb{F}_{x}^{2} \d t + \tilde{r}^{2}\mathbb{F}_{xx}^{2} \d t.
\end{align}
We estimate $-\int_{0}^{1}8\tilde{r}_{x}\tilde{r}^{-1}\left(\tilde{r}_{x}\mathbb{F}+\tilde{r}\mathbb{F}_{x}\right)\tilde{r}_{x}\mathbb{F}_{x} \d x \d t$, by \eqref{condition for F},
\begin{align}
\left|-\int_{0}^{1}8\tilde{r}_{x}\tilde{r}^{-1}\left(\tilde{r}_{x}\mathbb{F}+\tilde{r}\mathbb{F}_{x}\right)\tilde{r}_{x}\mathbb{F}_{x} \d x \d t\right|\ls C\left( \left\|\w\right\|_{2}^{3} +\left| \bar{J} \right| \left\|\w\right\|_{2}^{2}\right)\d t.
\end{align}
Other terms are dealt with similarly. Therefore, there holds
\begin{align}
&\int_{0}^{1}\tilde{r}^{-1}\left\langle 2\tilde{r}_{x}\tilde{r}^{-1}\left(\tilde{r}_{x}\mathbb{F}+\tilde{r}\mathbb{F}_{x}\right)\d  W - 2 \tilde{r}_{x}\mathbb{F}_{x}\d  W -\tilde{r}\mathbb{F}_{xx}\d  W \right\rangle \d x \ls C \left\|\w\right\|_{2}^{3} \d t.
\end{align}
Next we consider the stochastic terms in $2\w_{xx}\cdot \tilde{\mathcal{D}}\d  \w_{xx}$, i.e.,
\begin{align}
&4\tilde{r}_{x}\tilde{r}^{-1}\tilde{r}_{x}\mathbb{F} j_{xx}\d  W-4\tilde{r}_{x}\tilde{r}^{-1}\left(\tilde{r}_{x}\mathbb{F}+\tilde{r}\mathbb{F}_{x}\right) j_{xx}\d  W-2\tilde{r}_{xx}\mathbb{F}j_{xx}\d  W\\
&+2\tilde{r}_{xx}\mathbb{F} j_{xx}\d  W +4 \tilde{r}_{x}\mathbb{F}_{x} j_{xx}\d  W+ 2\tilde{r}\mathbb{F}_{xx}j_{xx}\d  W= 2\tilde{r}\mathbb{F}_{xx}j_{xx}\d  W.\notag
\end{align}
Since \eqref{condition for F}, by Soblev's inequality, it holds that
\begin{align} 
&\mathbb{E}\left[\left|\int_{0}^{t}\int_{0}^{1}2\tilde{r}\mathbb{F}_{xx} j_{xx} \d x \d  W\right|^{m}\right]\ls \mathbb{E}\left[\left(\int_{0}^{t}\left|\int_{0}^{1}C \tilde{r}\left( \left|J_{x}\right|^{2}+\left|J\right|\left|J_{xx}\right|\right)j_{xx} \d x\right|^{2} \d s\right)^{\frac{m}{2}}\right]\notag\\
\ls &\mathbb{E}\left[\left(\int_{0}^{t}C\tilde{r}\left(\left\|J_{x}\right\|^{2}\left\|J_{x}\right\|_{\infty}^{2}\left\|j_{xx}\right\|^{2}
                     +\left\|J\right\|_{\infty}^{2}\left\|j_{xx}\right\|^{4} \right) \d s\right)^{\frac{m}{2}}\right]\\
\ls &\mathbb{E}\left[\left(\int_{0}^{t}C\tilde{r}\left\|J_{x}\right\|^{2}\left\|j_{xx}\right\|^{4}  \d s\right)^{\frac{m}{2}}\right]\notag\\
\ls & \mathbb{E} \left[\left\|\w_{0} \right\|^{2m} \right]+ \mathbb{E}\left[\left(\frac{\vartheta_{8}}{2}\right)^{m}\sup_{s\in[0,t]}\left\|j_{x}\right\|^{2m}\right]
+\mathbb{E}\left[\left(\int_{0}^{t}C_{\vartheta_{8}}\left( \left\|\w\right\|_{2}^{3} +\left| \bar{J} \right| \left\|\w\right\|_{2}^{2}\right)\d s\right)^{m}\right].\notag
\end{align}
Combining the above estimates, we have
\begin{align}\label{2 order estimate 1}
&\mathbb{E}\left[\left(\int_{0}^{1} \left(\left|\sigma_{xx}\right|^{2}+\left|j_{xx}\right|^{2}\right)(t)\right)^{m}\right]+ \mathbb{E}\left[\left(\zeta_{10}\int_{0}^{t}\int_{0}^{1}j_{xx}^{2} \d s\right)^{m}\right]\notag\\
\ls &\mathbb{E}\left[\left(\int_{0}^{1} \left(\left|\sigma_{xx}\right|^{2}+\left|j_{xx}\right|^{2}\right)(0)\right)^{m}\right]+C\mathbb{E} \left[\left\|\w_{0} \right\|^{2m} \right]+C\mathbb{E} \left[\left\|\left(\w_{0}\right)_{x} \right\|^{2m} \right]\\
&+ \mathbb{E}\left[\left(\int_{0}^{t}C\left(\left|\bar{J}\right|\left\|\w\right\|_{2}^{2}+\left\|\w\right\|_{2}^{3}\right)\d s\right)^{m}\right].\notag
\end{align}
Similarly as in the first-order estimates, we multiply $-\tilde{\mathcal{A}}\w_{xxx}$ with $\left[\begin{array}{c}0\\ \sigma_{x}\end{array}\right]$, and integrate with respect to $x$ to give the estimate of $\int_{0}^{t}\int_{0}^{1}\left|\sigma_{xx}\right|^{2} \d x \d s$. In fact, by It\^o's formula, we know
\begin{align}
\tilde{\mathcal{D}}\d \left(\w_{xx}\cdot\left[\begin{array}{c}0  \\ \sigma_{x}\end{array}\right]\right)= \tilde{\mathcal{D}}\d \w_{xx}\cdot\left[\begin{array}{c}0  \\ \sigma_{x}\end{array}\right]+\tilde{\mathcal{D}} \w_{xx} \cdot\left[\begin{array}{c}0  \\ -j_{xx}\end{array}\right]\d t.
\end{align}
Then, we have
\begin{align}
&-\tilde{\mathcal{D}}\d  \left(\w_{xx}\cdot\left[\begin{array}{c}0  \\ \sigma_{x}\end{array}\right]\right) + \tilde{\mathcal{D}} \w_{xx} \cdot\left[\begin{array}{c}0  \\ -j_{xx}\end{array}\right]\d t - 2 \tilde{\mathcal{D}}_{x}\tilde{\mathcal{D}}^{-1}\tilde{\mathcal{D}}_{x}\left(\mathcal{A}\w_{x}+\mathcal{B}\w + \mathcal{C}\right)\cdot\left[\begin{array}{c}0\\ \sigma_{x}\end{array}\right] \d t\notag\\
&+2\tilde{\mathcal{D}}_{x}\tilde{\mathcal{D}}^{-1}\tilde{\mathcal{D}}_{x}\left(\mathcal{N} \d t + \left[\begin{array}{c} 0  \\ \mathbb{F}\d  W\end{array}\right]\right)\cdot\left[\begin{array}{c}0\\ \sigma_{x}\end{array}\right]
 +2\tilde{\mathcal{D}}_{x}\tilde{\mathcal{D}}^{-1}\tilde{\mathcal{A}}\w_{xx}\cdot\left[\begin{array}{c}0\\ \sigma_{x}\end{array}\right]\d t\notag \\
 &+2\tilde{\mathcal{D}}_{x}\tilde{\mathcal{D}}^{-1}\left(\left(\tilde{\mathcal{A}}_{x}+\tilde{\mathcal{B}}\right)\w_{x}+\tilde{\mathcal{B}}_{x}\w + \tilde{\mathcal{C}}_{x}\right)\cdot\left[\begin{array}{c}0\\ \sigma_{x}\end{array}\right] \d t- \tilde{\mathcal{N}}_{x}\cdot\left[\begin{array}{c}0\\ \sigma_{x}\end{array}\right] \d t  \notag \\
& - 2\tilde{\mathcal{D}}_{x}\tilde{\mathcal{D}}^{-1}\left( \tilde{\mathcal{D}}_{x}\left[\begin{array}{c}0  \\ \mathbb{F}\d  W\end{array}\right] + \tilde{\mathcal{D}} \left[\begin{array}{c}0  \\ \mathbb{F}_{x}\d  W\end{array}\right]\right)\cdot\left[\begin{array}{c}0\\ \sigma_{x}\end{array}\right] \\
&- \tilde{\mathcal{D}}_{xx} \left(-\mathcal{A}\w_{x}-\mathcal{B}\w - \mathcal{C} + \mathcal{N}\right)\cdot\left[\begin{array}{c}0\\ \sigma_{x}\end{array}\right] \d t - \tilde{\mathcal{D}}_{xx}\left[\begin{array}{c}0  \\ \mathbb{F}\d  W\end{array}\right] \cdot\left[\begin{array}{c}0\\ \sigma_{x}\end{array}\right] \notag \\
 &-\left(\tilde{\mathcal{A}}_{x}\w_{xx}+\tilde{\mathcal{A}}\w_{xxx} + \left(\tilde{\mathcal{A}}_{xx}+\tilde{\mathcal{B}}_{x}\right)\w_{x} +\left(\tilde{\mathcal{A}}_{x}+\tilde{\mathcal{B}}\right)\w_{xx}+\tilde{\mathcal{B}}_{xx}\w + \tilde{\mathcal{B}}_{x}\w_{x} +\tilde{\mathcal{C}}_{xx}\right)\cdot\left[\begin{array}{c}0\\ \sigma_{x}\end{array}\right] \d t \notag \\
=&- \tilde{\mathcal{N}}_{xx}\cdot\left[\begin{array}{c}0\\ \sigma_{x}\end{array}\right] \d t  + \tilde{\mathcal{D}}_{xx} \left[\begin{array}{c}0  \\ \mathbb{F}\d  W\end{array}\right]\cdot\left[\begin{array}{c}0\\ \sigma_{x}\end{array}\right] + 2\tilde{\mathcal{D}}_{x} \left[\begin{array}{c}0  \\ \mathbb{F}_{x}\d  W\end{array}\right]\cdot\left[\begin{array}{c}0\\ \sigma_{x}\end{array}\right] \notag \\
&+ \tilde{\mathcal{D}}\left[\begin{array}{c}0  \\ \mathbb{F}_{xx}\d  W\end{array}\right]\cdot\left[\begin{array}{c}0\\ \sigma_{x}\end{array}\right],  \notag
\end{align}
where the first term holds
\begin{align}
-\tilde{\mathcal{D}}\d  \left(\w_{xx}\cdot\left[\begin{array}{c}0  \\ \sigma_{x}\end{array}\right]\right) =-\d \left(\tilde{r}j_{xx}\sigma_{x}\right),
 \end{align}
 the second term holds
 \begin{align}
\tilde{\mathcal{D}} \w_{xx} \cdot\left[\begin{array}{c}0  \\ -j_{xx}\end{array}\right]\d t=\tilde{r} j_{xx}^{2}\d t.
  \end{align}
We consider the stochastic terms $\tilde{\mathcal{D}}\left[\begin{array}{c}0  \\ \mathbb{F}_{xx}\d  W\end{array}\right]\cdot\left[\begin{array}{c}0\\ \sigma_{x}\end{array}\right]$,
\begin{align}
& \mathbb{E} \left[\left| \int_{0}^{t}\int_{0}^{1} \tilde{r}\mathbb{F}_{xx}\sigma_{x} \d x \d  W \right|^{m}\right]
\ls  \mathbb{E} \left[ \left(\int_{0}^{t}\left|\int_{0}^{1} \tilde{r} \mathbb{F}_{xx}\sigma_{x} \d x \right|^{2} \d s \right)^{\frac{m}{2}} \right] \\
\ls & \mathbb{E} \left[  \left(\frac{\vartheta_{8}}{3}\sup\limits_{s\in[0, t]}\int_{0}^{1}\left|j_{xx}\right|^{2}\d x\right)^{m} \right]+\mathbb{E} \left[\left\|\w_{0} \right\|^{2m} \right]+C_{\vartheta_{8}}\mathbb{E} \left[\left( \int_{0}^{t} \int_{0}^{1} C\left( \left\|\w\right\|_{2}^{3} +\left| \bar{J} \right| \left\|\w\right\|_{2}^{2}\right)\d s \right)^{m}\right],\notag
\end{align}
where $\mathbb{E} \left[\left(\frac{\vartheta_{8}}{3} \sup\limits_{s\in[0, t]}\int_{0}^{1}\left|j_{xx}\right|^{2}\d x\right)^{m} \right]$ can be balanced.
Taking $ 2\tilde{\mathcal{D}}_{x}\tilde{\mathcal{D}}^{-1} \tilde{\mathcal{D}} \left[\begin{array}{c}0  \\ \mathbb{F}_{x}\d  W\end{array}\right]\cdot\left[\begin{array}{c}0\\ \sigma_{x}\end{array}\right]$ for example, we estimate
\begin{align}
& \mathbb{E} \left[\left| \int_{0}^{t}\int_{0}^{1} \tilde{r}_{x} \mathbb{F}_{x}\sigma_{x} \d x \d  W \right|^{m}\right]
\ls  \mathbb{E} \left[ \left(\int_{0}^{t}\left|\int_{0}^{1} \tilde{r}_{x} \mathbb{F}_{x}\sigma_{x} \d x \right|^{2} \d s \right)^{\frac{m}{2}} \right] \\
\ls & \mathbb{E} \left[\left\|\w_{0} \right\|^{2m} \right]+\mathbb{E} \left[ \left(C \sup\limits_{s\in[0, t]}\int_{0}^{1}\left|\sigma_{x}\right|^{2}\d x\right)^{m} \right] + \mathbb{E} \left[\left( \int_{0}^{t} \int_{0}^{1} C\left( \left\|\w\right\|_{2}^{3} +\left| \bar{J} \right| \left\|\w\right\|_{2}^{2}\right) \d s \right)^{m}\right],\notag
\end{align}
while the estimates for other terms are also bounded by
 $$\mathbb{E} \left[\left\|\w_{0} \right\|^{2m} \right]+ \mathbb{E} \left[  \left(C\sup\limits_{s\in[0, t]}\int_{0}^{1}\left|\sigma_{x}\right|^{2}\d x\right)^{m} \right]
+\mathbb{E} \left[\left( \int_{0}^{t} \int_{0}^{1} C\left( \left\|\w\right\|_{2}^{3} +\left| \bar{J} \right| \left\|\w\right\|_{2}^{2}\right) \d s \right)^{m}\right].$$
We calculate the key term
 \begin{align}
&-\tilde{\mathcal{A}}\w_{xxx} \cdot \left[\begin{array}{c} 0 \\ \sigma_{x}\end{array}\right] \d t \notag \\
= & -\left(\tilde{\mathcal{A}}\w_{xx}\cdot \left[\begin{array}{c} 0 \\ \sigma_{x}\end{array}\right]\right)_{x}\d t+ \tilde{\mathcal{A}}_{x}\w_{xx}\cdot \left[\begin{array}{c} 0 \\ \sigma_{x}\end{array}\right]\d t+ \tilde{\mathcal{A}}\w_{xx}\cdot  \left[\begin{array}{c} 0 \\ \sigma_{xx}\end{array}\right] \d t\\
 = & -\left(\tilde{\mathcal{A}}\w_{xx}\left[\begin{array}{c} 0 \\ \sigma_{x} \end{array}\right]\right)_{x}\d t +  \left( -\left(\frac{\bar{J}+j}{\bar{\rho}+\sigma}\right)^{2}+P'\left(\bar{\rho}+\sigma\right)\right)r \sigma_{xx}^{2}\d t+  2\frac{\bar{J}+j}{\bar{\rho}+\sigma}rj_{xx}\sigma_{x}\d t \notag\\
 &+\left(\left( -\left(\frac{\bar{J}+j}{\bar{\rho}+\sigma}\right)^{2}+P'\left(\bar{\rho}+\sigma\right)\right)r\right)_{x}\sigma_{xx}\sigma_{x}\d t+ 2\left(\frac{\bar{J}+j}{\bar{\rho}+\sigma}r\right)_{x} j_{xx}\sigma_{x}\d t.\notag
 \end{align}
 We integrate the above formula in $[0,1]$ with respect to $x$. Then, the first term holds
  \begin{align}
  &-\int_{0}^{t}\int_{0}^{1}\left(\tilde{\mathcal{A}}\w_{xx}\left[\begin{array}{c} 0 \\ \sigma_{x} \end{array}\right]\right)_{x}\d x \d s \\ =&\int_{0}^{t}\int_{0}^{1}2\frac{\bar{J}+j}{\bar{\rho}+\sigma}r \left(\sigma_{x}^{2}\right)_{t} \d x \d s\ls C\sup\limits_{s\in[0, t]}\int_{0}^{1}\left|\sigma_{x}\right|^{2}\d x.\notag
   \end{align}
 The second term is at leat $\zeta_{11}\sigma_{xx}^{2}$.
 The other three terms are bounded by
 \begin{align}
 C\left(\bar{J}+j\right)\w_{xx}^{2}+C\left| \w_{xx}\cdot \w_{x} \right|\ls  C\left(\bar{J}+j\right)\w_{xx}^{2}+\frac{\vartheta_{8}}{3}\w_{xx}^{2}+C_{\vartheta_{8}}\w_{x}^{2},
\end{align}
for $ \vartheta_{8}\ls \min\left\{\frac{\zeta_{10}}{4},\frac{\zeta_{11}}{4}\right\}$.
Therefore, we have
\begin{align}
 &\mathbb{E} \left[\left|\int_{0}^{t}\int_{0}^{1}\zeta_{12}\left(\sigma_{xx}^{2}+j_{xx}^{2}\right)\d x \d s \right|^{m}\right]\notag\\
  \ls
& \mathbb{E} \left[\left( \int_{0}^{t} \int_{0}^{1} C\left|\bar{J}\right|\left\|\w\right\|_{2}^{2} \d s \right)^{m}\right]+ \mathbb{E} \left[\left( \int_{0}^{t} \int_{0}^{1} C\left\|\w\right\|_{2}^{3} \d s \right)^{m}\right]\\
&+\mathbb{E} \left[ \left(\frac{\vartheta_{8}}{3} \sup\limits_{s\in[0, t]}\int_{0}^{1}\left|j_{xx}\right|^{2}\d x\right)^{m} \right]. \notag
\end{align}
In conclusion, together with \eqref{2 order estimate 1}, the second-order estimates can be written as
\begin{align}
&\mathbb{E} \left[\left|\sup\limits_{s\in[0,t]} \int_{0}^{1} \left(j_{xx}^{2}+\sigma_{xx}^{2}\right)(s)\d x \right|^{m}\right] + \mathbb{E} \left[\left|\zeta_{13} \int_{0}^{t}\int_{0}^{1}\left(j_{xx}^{2} +\sigma_{xx}^{2}\right) \d x \d s  \right|^{m}\right]\notag\\
\ls & \mathbb{E} \left[\left| \int_{0}^{1} \left(j_{xx}^{2}+\sigma_{xx}^{2}\right)(0)\d x \right|^{m}\right]+C\mathbb{E} \left[\left\|\w_{0} \right\|^{2m} \right]\\
&+\mathbb{E} \left[\left|C_{6}\int_{0}^{t} \left| \bar{J} \right| \left\|\w\right\|_{2}^{2} \d s +C_{6}\int_{0}^{t}\left\|\w\right\|_{2}^{3}\d s \right|^{m}\right]. \notag
\end{align}

 \section{Global existence}


 To give the global existence, we first establish the local existence of solutions by Banach's fixed point theorem. In the following estimates, the constants in the onto mapping estimates can depend on a given fixed time $T$, demanding less on the fineness of energy estimates compared to the estimates in the last section.  
 
 \subsection{Local existence}

For given $\sigma$ and $j$, we consider the following linearized system with $\hat{\w}$ known,
 \begin{align}\label{linearized system}
\d  \w+\left(\mathcal{A}\left(\hat{\w}\right)\w_{x}+\mathcal{B}\left(\bar{\w}\right)\w + \mathcal{C}(\w)\right)\d t= \mathcal{N}\left(\hat{\w},\w\right)\d t+ \left[\begin{array}{c}0 \\ \mathbb{F}\left(\hat{\w}\right)\d  W\end{array}\right].
 \end{align}
We denote $M=\sup\limits_{t\in[0,T]} \left\|\hat{\sigma}, \hat{J}\right\|_{2}$.

\begin{flushleft}
\textbf{Step 1: Uniform bound.}
\end{flushleft}
 \subsubsection{Zeroth-order estimates for linearized system}
 For the zeroth-order estimates, we just multiply system \eqref{linearized system} by $\w$, then we integrate it with respect to $x$ and $t$. From \eqref{mathcal A}, the expression of $\mathcal{A}$, we estimate
 \begin{align}
& \int_{0}^{1}\mathcal{A}\left(\hat{\w}\right)\w_{x}\cdot \w \d x \d t \ls C \left\| \hat{\w}\right\|_{1}\left\|\w\right\|^{2}\d t. \notag
 \end{align}
 By \eqref{mathcal B} and smoothness of steady state, there holds
\begin{align}
  \int_{0}^{1} \mathcal{B}\left(\bar{\w}\right)\w \cdot \w \d x \d t \ls C\left\|\w\right\|^{2}\d t.
\end{align}
Due to \eqref{mathcal C}, it holds that
\begin{align}
  \int_{0}^{1} \mathcal{C} \cdot \w \d x \d t \ls C \left\|\tilde{e}\right\|\left\|\w\right\|\d t.
\end{align}
 By \eqref{mathcal N} and the smoothness of steady state, we have
\begin{align}
 \int_{0}^{1} \mathcal{N}\left(\hat{\w},\w\right) \cdot \w \d x \d t \ls C\left(\left\|\hat{\w}\right\|_{1}+1\right)\left\|\w\right\|^{2}\d t.
\end{align}
For the stochastic term, taking the $m$-expectation, we estimate as follows
\begin{align}
 & \mathbb{E}\left[\left|\int_{0}^{t} \int_{0}^{1} \left[\begin{array}{c}0 \\ \mathbb{F}\left(\hat{\w}\right)\d  W\end{array}\right]\cdot \w \d x \right|^{m}\right]
\ls  \mathbb{E}\left[\left|\int_{0}^{t} \int_{0}^{1} \mathbb{F}\left(\hat{\w}\right) j \d x \d  W\right|^{m}\right]\notag \\
\ls & \mathbb{E}\left[\left(C\int_{0}^{t} \left|\int_{0}^{1} \mathbb{F}\left(\hat{\w}\right) j \d x \right|^{2}\d s \right)^{\frac{m}{2}}\right]\ls \mathbb{E}\left[\left(C\int_{0}^{t} \left\|\hat{J}\right\|_{1}^{4}\left\|j\right\|^{2}  \d s \right)^{\frac{m}{2}}\right]\\
 \ls & \mathbb{E}\left[\left( C\sup_{s\in[0,t]}\left\|\hat{J}\right\|_{1}^{4}\int_{0}^{t}\left\|j\right\|^{2}  \d s \right)^{\frac{m}{2}}\right] \notag\\
 \ls &\frac{1}{2^{m}} \mathbb{E}\left[\left( \sup_{s\in[0,t]}\left\|\w\right\|^{2} \right)^{m}\right]+ \mathbb{E}\left[\left(C\int_{0}^{t}\left\|\hat{J}\right\|_{1}^{4} \d s \right)^{m}\right]. \notag
\end{align}
By It\^o's formula, we have
\begin{align}
\d \left|\w\right|^{2}=2\d \w \cdot \w + \langle  \mathbb{F}\left(\hat{\w}\right)\d  W,  \mathbb{F}\left(\hat{\w}\right)\d  W\rangle=  2\d \w \cdot \w +\left|\mathbb{F}\left(\hat{\w}\right)\right|^{2} \d t.
\end{align}
We directly estimate
\begin{align}
\int_{0}^{1}\left|\mathbb{F}\left(\hat{\w}\right)\right|^{2}\d x \ls C \left\|\bar{J}+\hat{j}\right\|^{2}\left\|\bar{J}+\hat{j}\right\|_{\infty} \ls C\left(\left\|\hat{\w}\right\|_{1}^{2}+\left\|\hat{\w}\right\|_{1}+1\right)\left(\left\|\hat{\w}\right\|^{2}+\left\|\hat{\w}\right\|+1\right).
\end{align}
Combining the above estimates, we have
\begin{align}
&\mathbb{E}\left[\left( \sup_{s\in[0,t]}\int_{0}^{1}\left|\w(s)\right|^{2}\d x\right)^{m}\right] \notag\\
\ls & \mathbb{E}\left[\left(C\sup\limits_{s\in[0,t]}\left(\left\| \hat{\w}\right\|_{1}\right) \int_{0}^{t}\int_{0}^{1} \left|\w\right|^{2}\d x \d s\right)^{m}\right]\notag\\
&+\mathbb{E}\left[\left(C\int_{0}^{t}\left(\left\|\hat{\w}\right\|_{1}^{4}+\left\|\hat{\w}\right\|_{1}^{3}+\left\|\hat{\w}\right\|_{1}^{2}+\left\|\hat{\w}\right\|_{1} +1\right)\d s \right)^{m}\right]  \\
 &+\mathbb{E}\left[\left( C\int_{0}^{t}\int_{0}^{1} \left|\w\right|^{2}\d x \d s + \int_{0}^{1}\left|\w(0)\right|^{2}\d x\right)^{m}\right]\notag\\
\ls &C_{M,m}\mathbb{E}\left[\left(\int_{0}^{t}\int_{0}^{1} \left(\left|\w_{x}\right|^{2}+\left|\w\right|^{2}\right)\d x \d s\right)^{m}\right]+ C_{M,m}t^{m}+C\mathbb{E}\left[\left( \int_{0}^{1}\left|\w_{0}\right|^{2}\d x\right)^{m}\right] ,\notag
\end{align}
where $C_{M,m}$ depends on $m$ and $M=\sup\limits_{t\in[0,T]} \left\|\hat{\sigma}, \hat{J}\right\|_{2}$.

\subsubsection{Higher-order estimates for linearized system}

We multiply \eqref{linearized system} on the left by $\mathcal{D}$ to obtain
\begin{align}\label{symmetrized sto Euler-Poisson system contact boundary}
\d  \left(\mathcal{D} \w\right)+\left(\bar{\mathcal{A}} \left(\hat{\w}\right)\w_{x}+\bar{\mathcal{B}}\left(\bar{\w}\right)\w + \bar{\mathcal{C}}\right)\d t= \bar{\mathcal{N}}\left(\hat{\w},\w\right)\d t+ \mathcal{D} \left[\begin{array}{c}0 \\ \mathbb{F}\left(\hat{\w}\right)\d  W\end{array}\right],
\end{align}
where $\mathcal{D}$ is defined in \eqref{symmetrizing matrix of first order estim}, $\bar{\mathcal{A}}=\mathcal{D}\mathcal{A}$, $\bar{\mathcal{B}}=\mathcal{D}\mathcal{B}$, $ \bar{\mathcal{C}}=\mathcal{D}\mathcal{C}$, $\bar{\mathcal{N}} =\mathcal{D}\mathcal{N}$.
Differentiating \eqref{symmetrized sto Euler-Poisson system contact boundary} with respect to $x$, we get
\begin{align}\label{1 order derivative of symmetrized system 1}
&\d  \left(\mathcal{D} \w_{x}\right) - \left(\mathcal{D}_{x} \left(\mathcal{A}\left(\hat{\w}\right) \w_{x}+\mathcal{B}\left(\bar{\w}\right)\w + \mathcal{C}- \mathcal{N}\left(\hat{\w},\w\right)\right)\right)\d t \notag \\
 &+ \left(\bar{\mathcal{A}}\left(\hat{\w}\right)\w_{xx} + \left(\left(\bar{\mathcal{A}}\left(\hat{\w}\right)\right)_{x}+\bar{\mathcal{B}}\left(\bar{\w}\right)\right)\w_{x}+\left(\bar{\mathcal{B}}\left(\bar{\w}\right)\right)_{x}\w + \bar{\mathcal{C}}_{x}\right) \d t \\
=& \left(\bar{\mathcal{N}}\left(\hat{\w},\w\right)\right)_{x} \d t + \mathcal{D}_{x} \left[\begin{array}{c}0 \\ \mathbb{F}\left(\hat{\w}\right)\d  W\end{array}\right] + \mathcal{D} \left[\begin{array}{c}0\\ \left(\mathbb{F}\left(\hat{\w}\right)\right)_{x} \d  W\end{array}\right]. \notag
\end{align}
By It\^o's formula, it holds that
\begin{align}\label{equality of fiist order estim after Ito}
 & \d  \left( \mathcal{D}\frac{\left|\w_{x}\right|^{2}}{2}\right)
 = \mathcal{D} \d  \w_{x}\cdot \w_{x} + \frac{r^{-1}\left|r \left(\mathbb{F}\left(\hat{\w}\right)\right)_{x} + r_{x}\mathbb{F}\left(\hat{\w}\right) \right|^{2}}{2}\d t  \notag\\
=& \w_{x}\mathcal{D}_{x}\left(\mathcal{A}\left(\hat{\w}\right) \w_{x}+\mathcal{B}\left(\bar{\w}\right)\w + \mathcal{C}- \mathcal{N}\left(\hat{\w},\w\right)\right)\d t- \bar{\mathcal{A}}\left(\hat{\w}\right) \w_{xx}\w_{x}\d t \notag\\
 & - \left(\left(\bar{\mathcal{A}}\left(\hat{\w}\right)\right)_{x}+ \bar{\mathcal{B}}\left(\bar{\w}\right)\right) \w_{x}\cdot \w_{x} \d t
 - \left(\bar{\mathcal{B}}\left(\bar{\w}\right)\right)_{x}\w \w_{x}\d t - \bar{\mathcal{C}}_{x} \w_{x}\d t\\
 &+ \left(\bar{\mathcal{N}}\left(\hat{\w},\w\right)\right)_{x} \w_{x}\d t + r_{x} \mathbb{F}\left(\hat{\w}\right) j_{x} \d  W + r \left(\mathbb{F}\left(\hat{\w}\right)\right)_{x}j_{x} \d  W+\frac{r^{-1}\left|r\mathbb{F}_{x}+r_{x}\mathbb{F}\right|^{2}\left(\hat{\w}\right)}{2}\d t.\notag
\end{align}
By the boundary condition $j_{x}(0)=j_{x}(1)=0$, we have
\begin{align}
\int_{0}^{1}\bar{\mathcal{A}}\left(\hat{\w}\right) \w_{xx} \cdot \w_{x} \d x = -\int_{0}^{1} \frac{1}{2}\bar{\mathcal{A}}_{x} \w_{x} \cdot \w_{x} \d x .
\end{align}
Now we collect all the terms containing $\w_{x}^{2}$, for \eqref{mathcal A} and \eqref{mathcal B},
\begin{align}
&\int_{0}^{1} \left(-\mathcal{D}_{x}\mathcal{A}\left(\hat{\w}\right)+\left(\bar{\mathcal{A}}\left(\hat{\w}\right)\right)_{x}+\bar{\mathcal{B}}\left(\bar{\w}\right)
-\frac{1}{2}\left(\bar{\mathcal{A}}\left(\hat{\w}\right)\right)_{x}\right)\w_{x} \cdot \w_{x} \d x \\
= &\int_{0}^{1} \mathcal{Q}\left(\hat{\w}\right) \w_{x}\cdot \w_{x} \d x, \notag
\end{align}
where
$\mathcal{Q}\left(\hat{\w}\right)=\left(\frac{1}{2}\mathcal{D}\mathcal{A}_{x}-\frac{1}{2}\mathcal{D}_{x}\mathcal{A}+\mathcal{D}\mathcal{B}\right)\left(\hat{\w}\right)
=\left[\begin{array}{cc}
q_{11} & q_{12} \\
 q_{11} & q_{22}
\end{array}\right]$.
$r$ is chosen such that
\begin{align}
\left|q_{12}+q_{21}\right|=&O\left(\left|\hat{\w}\right|+\left|\hat{\w}_{x}\right|\right), \\
q_{22}=&\frac{1}{2}r\left\{2\frac{\bar{J}+\hat{j}}{\bar{\rho}+\sigma}\right\}_{x}-\frac{1}{2}r_{x}\left(2\frac{\bar{J}+\hat{j}}{\bar{\rho}+\sigma}\right) +r \left(1-2\frac{\bar{J}}{\bar{\rho}^{2}}\bar{\rho}_{x}\right)\\
=& r-\frac{\bar{J}}{\bar{\rho}}r_{x}-3\frac{\bar{J}}{\bar{\rho}^{2}}\bar{\rho}_{x}r+O\left(|\hat{\w}|+|\hat{\w}_{x}|\right).\notag
\end{align}
\eqref{r satisfies in first order estimates} admits a positive solution $r(x)$.
Thus, for small $\bar{J}$, there is a constant $\eta_{1}>0$, such that
\begin{align}
\int_{0}^{1}\mathcal{Q}\w_{x}\cdot\w_{x} \d x \geqslant \eta_{1}\int_{0}^{1} j_{x}^{2} \d x + \int_{0}^{1} O\left(\left|\hat{\w}\right|+\left|\hat{\w}_{x}\right|\right)\left|\w_{x}\right|^{2}\d x,
\end{align}
where it holds that
\begin{align}
\int_{0}^{1} O\left(\left|\hat{\w}\right|+\left|\hat{\w}_{x}\right|\right)\left|\w_{x}\right|^{2}\d x \ls \left\|\hat{\w}_{x}\right\|_{\infty}\left\|\w_{x}\right\|^{2} \ls  \left\|\hat{\w}\right\|_{2}\left\|\w_{x}\right\|^{2}.
\end{align}
For the terms containing $\w\cdot \w_{x}$, we estimate
\begin{align}
&\int_{0}^{1} \left(-\mathcal{D}_{x}\mathcal{B}\left(\bar{\w}\right)+\left(\bar{\mathcal{B}}\left(\bar{\w}\right)\right)_{x}\right)\w \cdot \w_{x}\d x =\int_{0}^{1} \mathcal{D}\left(\mathcal{B}\left(\bar{\w}\right)\right)_{x} \w \cdot \w_{x}\d x\\
 =& \int_{0}^{1} O\left(\left|\w\right|\left|\w_{x}\right|\right)\d x\ls C \int_{0}^{1} \left|\w_{x}\right|^{2}\d x+C \int_{0}^{1}\left|\w\right|^{2}\d x,\notag
\end{align}
with the formula of $\mathcal{B}$ \eqref{mathcal B}.
For the terms concerning $\w_{x}$, from \eqref{mathcal C} and \eqref{mathcal N}, we have
\begin{align}
 & \left(-\mathcal{D}_{x}\mathcal{C}+\bar{\mathcal{C}}_{x}\right) \cdot \w_{x}\d t= \mathcal{D}\mathcal{C}_{x}\cdot \w_{x}\d t= - r\left(\bar{\rho} \tilde{e} \right)_{x}j_{x}\d t= r\left(\bar{\rho}\sigma+\bar{\rho}_{x}\tilde{e}\right)\sigma_{t}\d t \\
=& \d \left(\frac{1}{2}r\bar{\rho}\sigma^{2} + r\bar{\rho}_{x}\tilde{e}\sigma \right)-r\bar{\rho}_{x}\tilde{e}_{t}\sigma\d t
= \d \left(\frac{1}{2}r\bar{\rho}\sigma^{2} + r\bar{\rho}_{x}\tilde{e}\sigma \right) + O\left(\sigma^{2}+j^{2}\right)\d t, \notag
\end{align}
and
\begin{align}
\left(\bar{\mathcal{N}}\left(\hat{\w},\w\right)\right)_{x}\w_{x}=O\left(\left|\hat{\w}\right|+\left|\hat{\tilde{e}}\right|+1\right)\left(\left|\w\right|^{2}+\left|\w_{x}\right|^{2}\right).
\end{align}
Integrating them over $x$, due to $\tilde{e}_{x}=\sigma$, we have
\begin{align}
&\int_{0}^{1}\left(-\mathcal{D}_{x}\mathcal{C}+\bar{\mathcal{C}}_{x}\right) \cdot \w_{x}\d x\ls \d \int_{0}^{1} \left(\frac{1}{2}r\bar{\rho}\sigma^{2} + r\bar{\rho}_{x}\tilde{e}\sigma \right)\d x+ C\int_{0}^{1} \left(\sigma^{2}+j^{2}\right)\d x \d t\notag\\
\ls&  \d \int_{0}^{1}C \left(\sigma^{2}+\tilde{e}^{2}\right)\d x + C\int_{0}^{1} \left(\sigma^{2}+j^{2}\right)\d x\d t\\
\ls & C\d  \left(\left\|\sigma\right\|^{2}+\left\|\tilde{e}\right\|_{1}^{2}\right) + C\int_{0}^{1} \left(\sigma^{2}+j^{2}\right)\d x \d t
= C\d \left\|\sigma\right\|^{2} + C\left\|\sigma\right\|^{2} \d t , \notag
\end{align}
and
\begin{align}
\int_{0}^{1}\left(\bar{\mathcal{N}}\left(\hat{\w},\w\right)\right)_{x}\w_{x}\d x \ls C\left(\left\|\hat{\w}\right\|_{1}+\left\|\hat{\tilde{e}}\right\|_{1}+1\right)\int_{0}^{1}\left(\left|\w\right|^{2}\d x +\int_{0}^{1}\left|\w_{x}\right|^{2}\right)\d x.
\end{align}
Under \eqref{condition for F}, the assumption for $\mathbb{F}$,  we estimate
\begin{align}
\int_{0}^{1}  \frac{r^{-1}\left|r\mathbb{F}_{x}+r_{x}\mathbb{F}\right|^{2}\left(\hat{\w}\right)}{2}\d x \d t
\ls C\int_{0}^{1}\left(\left|\hat{\w}\right|^{2}+\left|\hat{\w}_{x}\right|^{2}\right)^{2}\d x \d t
\ls C\left\|\hat{\w}\right\|_{1}^{4}\d t,
\end{align}
\begin{align}
\int_{0}^{1}\frac{r^{-1}\left|r_{x}\mathbb{F}\right|^{2}\left(\hat{\w}\right)}{2}\d x \d t \ls C\int_{0}^{1}\left|\hat{\w}\right|^{4}\d x \d t\ls C\left\|\hat{\w}\right\|_{1}^{4}\d t.
\end{align}
For the stochastic term, since $\bar{J}$ is sufficiently small, we have
\begin{align}
& \mathbb{E} \left[\left| \int_{0}^{t}\int_{0}^{1} r_{x} \mathbb{F}\left(\hat{\w}\right) j_{x} \d x \d  W \right|^{m}\right] \notag\\
\ls &\mathbb{E} \left[ \left(C\int_{0}^{t}\left|\int_{0}^{1} r_{x} \mathbb{F}\left(\hat{\w}\right) j_{x} \d x \right|^{2} \d s \right)^{\frac{m}{2}} \right] \notag\\
=&\mathbb{E} \left[ \left(C\int_{0}^{t}\left|\int_{0}^{1} r_{x} \left|\bar{J}+\hat{j}\right|^{2} j_{x} \d x \right|^{2} \d s \right)^{\frac{m}{2}} \right] \\
\ls & \mathbb{E} \left[  \left(\frac{\vartheta_{9}}{4}\sup\limits_{s\in [0,t]}\int_{0}^{1}\left|\hat{j}\right|^{2}\d x\right)^{m} \right]+\mathbb{E} \left[ \left(\frac{\vartheta_{9}}{4}\sup\limits_{s\in [0,t]}\int_{0}^{1}\left|j_{x}\right|^{2}\d x\right)^{m} \right]\notag\\
 &+\mathbb{E} \left[\left(C_{\vartheta_{9}} \int_{0}^{t} \left|\int_{0}^{1} \left|\hat{j}\right|\left|\w_{x}\right| \d x \right|^{2} \d s \right)^{m}\right],\notag
\end{align}
where $\vartheta_{9}$ is chosen such that $\mathbb{E} \left[\left(\frac{\vartheta_{9}}{4}\sup\limits_{s\in [0,t]}\int_{0}^{1}\left|j\right|^{2}\d x\right)^{m} \right]$ can be balanced by the zeroth-order estimates, $\mathbb{E} \left[ \left(\frac{\vartheta_{9}}{4} \sup\limits_{s\in [0,t]}\int_{0}^{1}\left|j_{x}\right|^{2}\d x\right)^{m} \right]$ can be balanced by the left hand side of this first-order estimates. Further, there holds
\begin{align}
\mathbb{E} \left[\left( \int_{0}^{t} \left|\int_{0}^{1} \left|\hat{j}\right|\left|\w_{x}\right| d x \right|^{2} \d s \right)^{m}\right]\ls\mathbb{E} \left[\left( \int_{0}^{t} C\left\|\hat{\w}\right\|_{2}^{2}\left\|\w\right\|_{2}^{2} \d s \right)^{m}\right]. \notag
\end{align}
Combining the above terms, along with the zeroth-order estimates , we obtain
\begin{align}
& \mathbb{E} \left[\left|\sup\limits_{s\in [0,t]} \int_{0}^{1} \left(\sigma_{x}^{2}+j_{x}^{2}\right)(s)\d x +\eta_{2} \int_{0}^{t}\int_{0}^{1}j_{x}^{2} \d x \d s  \right|^{m}\right] \notag\\
\ls &  \mathbb{E} \left[\left| C\left\|\sigma(t)\right\|^{2}+ C\int_{0}^{t} \left(\left\|\hat{\w}\right\|_{2}^{2}+1\right)\left\|\w\right\|^{2}\ \d s + C \int_{0}^{t} \left(\left\|\hat{\w}\right\|_{2}+1\right)\left\|\w_{x}\right\|^{2}\d s \right|^{m}\right] \\
& + \mathbb{E} \left[\left| C\left(\left\|\hat{\w}\right\|_{1}+\left\|\hat{\tilde{e}}\right\|_{1}+1\right)\int_{0}^{t}\int_{0}^{1} \left(\left|\w_{x}\right|^{2}+\left|\w\right|^{2}\right)\d x \d s \right|^{m}\right] + \mathbb{E}\left[\left|C\int_{0}^{t}\left\|\hat{\w}\right\|_{1}^{4}\d s \right|^{m}\right] \notag\\
\ls & C_{M,m} \mathbb{E}\left[\left(\int_{0}^{t}\int_{0}^{1} \left(\left|\w_{x}\right|^{2}+\left|\w\right|^{2}\right)\d x \d s\right)^{m}\right]+ C_{M,m}t^{m}+C\mathbb{E}\left[\left( \int_{0}^{1}\left(\left|\w_{0,x}\right|^{2}+\left|\w_{0}\right|^{2}\right)\d x\right)^{m}\right], \notag
\end{align}
where $C_{M,m}$ depends on $m$ and $M=\sup\limits_{t\in[0,T]} \left\|\hat{\sigma}, \hat{\u}\right\|_{2}$, $\eta_{2}$ is a positive constant.

 Similarly, the second-order estimates of uniform upper bound can be obtained similarly to boundary estimates \eqref{important claim on boundary}:
\begin{align}
 &\mathbb{E} \left[\left|\sup\limits_{s\in [0,t]} \int_{0}^{1} \left(\sigma_{xx}^{2}+j_{xx}^{2}\right)(s)\d x +\eta_{3} \int_{0}^{t}\int_{0}^{1}j_{xx}^{2} \d x \d s  \right|^{m}\right]\notag\\
 \ls & C_{M,m} \mathbb{E}\left[\left(\int_{0}^{t}\int_{0}^{1} \left(\left|\w_{xx}\right|^{2}+\left|\w_{x}\right|^{2}+\left|\w\right|^{2}\right)\d x \d s\right)^{m}\right]+ C_{M,m}t^{m}\\
 &+C\mathbb{E}\left[\left( \int_{0}^{1}\left(\left|\w_{0,xx}\right|^{2}+\left|\w_{0,x}\right|^{2}+\left|\w_{0}\right|^{2}\right)\d x\right)^{m}\right], \notag
\end{align}
 where $C_{M,m}$ depends on $m$ and $M=\sup\limits_{t\in[0,T]} \left\|\hat{\sigma}, \hat{\u}\right\|_{2}$, $\eta_{3}$ is a positive constant.
In summary of the estimates, there holds
\begin{align}
   & \mathbb{E}\left[\left(\sup\limits_{s\in[0,t]}\int_{0}^{s} \d_{\tau }\left(\left\|\w\right\|_{2}^{2} \right) \right)^{m}\right]
  \ls C_{M,m}\left(t^{m}+\mathbb{E}\left[\left(\int_{0}^{t} \left(\left\|\w\right\|_{2}^{2}  \right) \d s \right)^{m}\right]\right).
\end{align}
By Gr\"onwall's inequality, we have $\w\in L^{m}\left(\Omega; C\left([0,T];H^{2}\left(U\right)\right)\right)$. Actually, it holds that
\begin{align}\label{local estimates}
\mathbb{E}\left[\left(\sup\limits_{s\in[0,t]}\left(\left\|\w\right\|_{2}^{2}\right) (s) \right)^{m}\right]
\ls &\mathbb{E}\left[\left(\left(\left\|\w\right\|_{2}^{2}  \right) (0)\right)^{m}\right]+C_{M,m}t^{m} \notag\\
&+ \int_{0}^{t}\left(\mathbb{E}\left[\left(\left\|\w\right\|_{2}^{2}  (0) \right)^{m}\right]+C_{M,m}t^{m}\right)C_{M,m}e^{\int_{0}^{s}C_{M,m}\d \tau}\d s\\
\ls & \left(\mathbb{E}\left[\left(\left\|\w_{0}\right\|_{2}^{2}  \right)^{m}\right]+C_{M,m}t^{m}\right) e^{C_{M,m}t}.\notag
\end{align}
From \eqref{local estimates}, by applying Kolmogorov-Centov's theorem, following the standard argument in stochastic analysis \cite{BreitFeireislHofmanova-book2018}, we deduce the time continuity of $\w$ up to a modification in a completed probability space $\left(\tilde{\Omega}, \tilde{\mathfrak{F}}, \tilde{\mathbb{P}}\right)$, and so we omit the details.

The iteration scheme is
 \begin{align}\label{iteration system}
&\d  \w_{n} +\left(\mathcal{A}\left(\w_{n-1}\right)\left(\w_{n}\right)_{x}+\mathcal{B}\left(\bar{\w}\right)\w_{n} + \mathcal{C}\right)\d t\\
&= \mathcal{N}\left(\w_{n-1},\w_{n} \right)\d t+ \left[\begin{array}{c}0 \\ \mathbb{F}\left(\w_{n-1}\right)\d  W\end{array}\right].\notag
 \end{align}
By energy estimates \eqref{local estimates}, we take $T_{0}$ such that
\begin{align}
e^{C_{M,m}T_{0}}\ls 2, \quad C_{M,m}T_{0} \ls \mathbb{E}\left[\left(\left\|\w\right\|_{2}^{2}(0) \right)^{m}\right].
\end{align}
If 
\begin{align}
\mathbb{E}\left[\left\| \w_{n-1}\right\|_{2}^{2m}\right] \ls 4\mathbb{E}\left[\left(\left\|\w_{0}\right\|_{2}^{2}\right)^{m}\right] ,
\end{align}
holds, then there holds
 \begin{align}
 \mathbb{E}\left[\left\| \w_{n}\right\|_{2}^{2m}\right] \ls 4\mathbb{E}\left[\left(\left\|\w_{0}\right\|_{2}^{2}\right)^{m}\right].
 \end{align} 
\begin{flushleft}
\textbf{Step 2: Contraction}
\end{flushleft}
 We shall prove the contraction and the local existence of classical solutions $\sigma, j$ in the space $L^{m}\left(\Omega; C\left([0,T]; H^{2}\left([0,1]\right)\right)\right)$.
$\left(\w_{n}-\w_{n-1}\right)$ satisfies
\begin{align}\label{contraction system}
& \left(\d  \w_{n}-\d  \w_{n-1}\right)+ \mathcal{A}\left(\w_{n-1}\right)\left(\left(\w_{n}\right)_{x}-\left(\w_{n-1}\right)_{x}\right)\d t\notag\\
&+\left(\mathcal{A}\left(\w_{n-1}\right)-\mathcal{A}\left(\w_{n-2}\right)\right)\left(\w_{n-1}\right)_{x}\d t +\mathcal{B}\left(\bar{\w}\right) \left(\w_{n}-\w_{n-1}\right) \d t +\mathcal{C}\left(\tilde{e}_{n}\right)-\mathcal{C}\left(\tilde{e}_{n-1}\right) \d t \\
 =& \left(\mathcal{N}\left(\w_{n-1},\w_{n} \right)-\mathcal{N}\left(\w_{n-2},\w_{n-1} \right)\right)\d t + \left[\begin{array}{c}0 \\ \left(\mathbb{F}\left(\w_{n-1}\right)-\mathbb{F}\left(\w_{n-2}\right)\right)\d  W\end{array}\right]. \notag
\end{align}
Then we multiply \eqref{contraction system} with $ \left(\w_{n}-\w_{n-1}\right)$ and integrate over $x$. By It\^o's formula, there holds
\begin{align}
\d \left|\w_{n}- \w_{n-1}\right|^{2}= 2\left(\w_{n}- \w_{n-1}\right)\cdot \left( \d  \w_{n}-\d  \w_{n-1}\right)+ \left|\mathbb{F}\left(\w_{n-1}\right)-\mathbb{F}\left(\w_{n-2}\right)\right|^{2}\d t.
\end{align}
Recalling \eqref{mathcal A}, we estimate by integration by parts
\begin{align}
 &\int_{0}^{1} \mathcal{A}\left(\w_{n-1}\right)\left(\left(\w_{n}\right)_{x}-\left(\w_{n-1}\right)_{x}\right)\cdot \left(\w_{n}-\w_{n-1}\right)\d x \d t
 \ls C \left\| \w_{n-1}\right\|_{1} \left\|\w_{n}-\w_{n-1}\right\|^{2}\d t.
\end{align}
We directly estimate
\begin{align}
&\int_{0}^{1} \left(\mathcal{A}\left(\w_{n-1}\right)-\mathcal{A}\left(\w_{n-2}\right)\right)\left(\w_{n-1}\right)_{x}\cdot \left(\w_{n}-\w_{n-1}\right)\d x \d t\\
 \ls & C \left\| \w_{n-1}\right\|_{1} \left\|\w_{n-1}-\w_{n-2}\right\| \left\|\w_{n}-\w_{n-1}\right\|\d t.\notag
\end{align}
With \eqref{mathcal B}, we have
\begin{align}
\int_{0}^{1} \mathcal{B}\left(\bar{\w}\right) \left(\w_{n}-\w_{n-1}\right)\cdot \left(\w_{n}-\w_{n-1}\right)\d x \d t \ls C\left\|\w_{n}-\w_{n-1}\right\|^{2}\d t.
\end{align}
Due to $\tilde{e}_{x}=\sigma$ and \eqref{mathcal C}, it holds that
\begin{align}
&\int_{0}^{1} \mathcal{C}\left(\tilde{e}_{n}\right)-\mathcal{C}\left(\tilde{e}_{n-1}\right)\cdot \left(\w_{n}-\w_{n-1}\right)\d x  \d t\\
 \ls &C\left\|\tilde{e}_{n}-\tilde{e}_{n-1}\right\| \left\|\w_{n}-\w_{n-1}\right\| \d t \ls C\left\|\w_{n}-\w_{n-1}\right\|^{2}\d t.\notag
\end{align}
Since \eqref{mathcal N} and
\begin{align}
&\mathcal{N}\left(\w_{n-1},\w_{n} \right)-\mathcal{N}\left(\w_{n-2},\w_{n-1} \right)\\
=&\mathcal{N}\left(\w_{n-1},\w_{n} \right)-\mathcal{N}\left(\w_{n-1},\w_{n-1} \right)+\mathcal{N}\left(\w_{n-1},\w_{n-1} \right)-\mathcal{N}\left(\w_{n-2},\w_{n-1} \right), \notag
\end{align}
 we estimate
\begin{align}
&\int_{0}^{1} \left(\mathcal{N}\left(\w_{n-1},\w_{n} \right)-\mathcal{N}\left(\w_{n-2},\w_{n-1} \right)\right) \cdot \left(\w_{n}-\w_{n-1}\right)\d x  \d t \notag\\
\ls &C\int_{0}^{1} \left(\left(\w_{n}-\w_{n-1}\right)+\left(\w_{n-1}-\w_{n-2}\right)\right) \cdot \left(\w_{n}-\w_{n-1}\right)\d x  \d t\\
\ls &C \left\|\w_{n}-\w_{n-1}\right\|^{2}\d t+\left\|\w_{n-1}-\w_{n-2}\right\|\left\|\w_{n}-\w_{n-1}\right\|\d t  .\notag
\end{align}
For the stochastic integral, under the assumption for $\mathbb{F}$ \eqref{condition for F}, we estimate
\begin{align}
&\mathbb{E}\left[\left|\int_{0}^{t}\int_{0}^{1}\left(\mathbb{F}\left(\w_{n-1}\right)-\mathbb{F}\left(\w_{n-2}\right)\right) \left(j_{n}-j_{n-1}\right)\d x\d  W \right|^{m}\right]\notag\\
\ls &  \mathbb{E}\left[\left|C\int_{0}^{t}\left|\int_{0}^{1}\left(\mathbb{F}\left(\w_{n-1}\right)-\mathbb{F}\left(\w_{n-2}\right)\right) \left(j_{n}-j_{n-1}\right)\d x\right|^{2}\d s \right|^{\frac{m}{2}}\right] \\
\ls & \mathbb{E}\left[\left|C\int_{0}^{t} \left\|J_{n-1}-J_{n-2}\right\|^{2}\left\|j_{n}-j_{n-1}\right\|^{2}\d s \right|^{\frac{m}{2}}\right] \notag\\
\ls &\frac{1}{4^{m}}\mathbb{E}\left[\left|\sup\limits_{s\in [0,t]}\left\|j_{n}-j_{n-1}\right\|^{2} \right|^{m}\right]+ \mathbb{E}\left[\left|C\int_{0}^{t} \left\|j_{n-1}-j_{n-2}\right\|^{2}\d s\right|^{m}\right].\notag
\end{align}
Immediately, there holds
\begin{align}
&\mathbb{E}\left[\left|\int_{0}^{t}\int_{0}^{1}\left|\mathbb{F}\left(\w_{n-1}\right)-\mathbb{F}\left(\w_{n-2}\right)\right|^{2}\d x\d s \right|^{m}\right]\ls \mathbb{E}\left[\left|C\int_{0}^{t}\left\|j_{n-1}-j_{n-2}\right\|^{2}\d s \right|^{m}\right]\\
\ls & \mathbb{E}\left[\left|C\int_{0}^{t}\left\|\w_{n-1}-\w_{n-2}\right\|^{2}\d s \right|^{m}\right].\notag
\end{align}
Combining the above estimates, for some $m\geqslant 2$, we have
\begin{align}
&\mathbb{E} \left[\left(\int_{0}^{t} \d  \left\|\w_{n}-\w_{n-1}\right\|^{2} \right)^{m}\right] \\
\ls &\mathbb{E}\left[\left|\int_{0}^{t} C \left(\left\|\w_{n}-\w_{n-1}\right\|^{2}+\left\|\w_{n-1}-\w_{n-2}\right\|^{2}+ \left\|\w_{n}-\w_{n-1}\right\|\left\|\w_{n-1}-\w_{n-2}\right\|\right) \d s \right|^{m}\right]\notag\\
&+\frac{1}{4^{m}}\mathbb{E}\left[\left|\sup\limits_{s\in [0,t]}\left\|j_{n}-j_{n-1}\right\|^{2} \right|^{m}\right],\notag
\end{align}
where $C$ depends on $M$.
By Cauchy's inequality and Jensen's inequality, we have
\begin{align}
&\mathbb{E} \left[ \left(\sup_{s\in[0,t]} \left\|\w_{n}-\w_{n-1}\right\|^{2}\right)^{m}\right]\notag\\
\ls &\mathbb{E} \left[ \left(\int_{0}^{t} C \left(\left\|\w_{n}-\w_{n-1}\right\|^{2}+\left\|\w_{n-1}-\w_{n-2}\right\|^{2}+ \left\|\w_{n}-\w_{n-1}\right\|\left\|\w_{n-1}-\w_{n-2}\right\|\right) \d s \right)^{m}\right]\notag\\
\ls & \mathbb{E} \left[ \left(\int_{0}^{t} C \left(\left\|\w_{n}-\w_{n-1}\right\|^{2}+\left\|\w_{n-1}-\w_{n-2}\right\|^{2}\right) \d s\right)^{m}\right]\\
\ls & \int_{0}^{t}\left(\mathbb{E} \left[\left(C_{7} \left\|\w_{n}-\w_{n-1}\right\|^{2}\right)^{m}\right]+ \mathbb{E} \left[\left(C_{7}\left\|\w_{n-1}-\w_{n-2}\right\|^{2}\right)^{m}\right]\right)\d s.\notag
\end{align}
 The higher order contraction estimates are proved similarly to zeroth-order, with the symmetrizing matrix and the boundary estimates \eqref{important claim on boundary}, and so the detailed proof is omitted here.

 In summary, we have
\begin{align}
&\mathbb{E} \left[ \left(\sup_{s\in[0,t]} \left\|\w_{n}-\w_{n-1}\right\|_{2}^{2}\right)^{m}\right] \\
\ls & \int_{0}^{t}\left(\mathbb{E} \left[\left(C_{8} \left\|\w_{n}-\w_{n-1}\right\|_{2}^{2}\right)^{m}\right]+ \mathbb{E}
\left[\left(C_{8}\left\|\w_{n-1}-\w_{n-2}\right\|_{2}^{2}\right)^{m}\right]\right)\d s. \notag
\end{align}
By Gr\"onwall's inequality, we have
\begin{align}
&\mathbb{E} \left[ \left( \sup_{s\in[0,t]} \left\|\w_{n}-\w_{n-1}\right\|_{2}^{2}\right)^{m}\right]\\
\ls & \mathbb{E} \left[ \left(\sup_{s\in[0,t]}\left\|\w_{n-1}-\w_{n-2}\right\|_{2}^{2}\right)^{m}\right]C_{8}^{m}t+\int_{0}^{t}\mathbb{E} \left[ \left(\sup_{s\in[0,\tau]}\left\|\w_{n-1}-\w_{n-2}\right\|_{2}^{2} \right)^{m}\right] \tau C_{8}^{2r} e^{C_{8}^{m}\tau} \d \tau \notag\\
\ls & 3 C_{8}^{m} \mathbb{E} \left[ \left(\sup_{s\in[0,\tau]}\left\|\w_{n-1}-\w_{n-2}\right\|_{2}^{2} \right)^{m}\right]t. \notag
\end{align}
Setting $T_{1}\ls T_{0}$ and $3C_{8}^{m}T_{1}<1$, $e^{C_{8}^{m}T_{1}}\ls 2 $, it holds that
\begin{align}\label{Contraction conclusion}
\mathbb{E} \left[ \left( \sup\limits_{s\in[0,t]} \left\|\w_{n}-\w_{n-1}\right\|_{2}\right)^{m}\right]\ls a ~\mathbb{E} \left[ \left( \sup\limits_{s\in[0,t]}\left\|\w_{n-1}-\w_{n-2}\right\|_{2}\right)^{m}\right], \quad a < 1,
 \end{align}
where $\w_{n}$ is a Cauchy sequence, $a=3C_{8}^{m}T_{1}$. 
By Banach's fixed point theorem, there exists a unique solution $\w$ in $L^{2m}\left(\Omega; C\left([0,T]; H^{2}\left([0,1]\right)\right)\right)$. Since $\phi_{xx}=\sigma$ is the unique solution under the boundary condition \eqref{boundary condtion for perturbations}, and $\phi \in L^{2m}\left(\Omega; C\left([0,T]; H^{4}\left([0,1]\right)\right)\right)$, it follows that $\tilde{e}=\phi_{x}$ is a unique solution, $\tilde{e}\in L^{2m}\left(\Omega; C\left([0,T]; H^{3}\left([0,1]\right)\right)\right)$.

By the proof of Theorem 5.2.9 in \cite{Karatzas1988}, $(\rho,J,\Phi)$ is the unique strong solutions to SEP, where $\rho,J\in C\left([0,T_{1}]; H^{2}\left(U\right)\right)$ and $\Phi \in C\left([0,T_{1}]; H^{4}\left(U\right)\right)$ hold $\mathbb{P}$ a.s. Although it is smooth in PDE, it is a strong solution in stochastic analysis. We give the definition of the local strong solution.

\begin{definition}
Let $\left(\Omega, \mathfrak{F}, \mathbb{P}\right)$ be a fixed stochastic basis with a complete right-continuous filtration $\mathfrak{F}=\left(\mathfrak{F}_{s}\right)_{s\geqslant 0}$ and $W$ be the fixed Wiener process. $\left(\rho, J, \Phi\right)$ is called a strong solution to initial and boundary problem \eqref{1-d Euler Poisson}-\eqref{condition for F}-\eqref{1-D contact boundary of rho}-\eqref{1-D contact boundary of Phi}-\eqref{initial conditions}-\eqref{Initial formula of Phi}, if:
\begin{enumerate}
  \item $\left(\rho, J, \Phi\right)$ is adapted to the filtration $\left(\mathfrak{F}_{s}\right)_{s\geqslant 0}$;
  \item $\mathbb{P}[\{\left(\rho(0), J(0), \Phi(0)\right)=\left(\rho_{0}, J_{0}, \Phi_{0}\right) \}]=1$;
  \item the equation of continuity
\begin{align}
   \rho(t)
  =  \rho_{0}- \int_0^t J_{x}  \d s,\notag
\end{align}
  holds $\mathbb{P}$ {\rm a.s.}, for any $t\in [0, T_{1}]$;
  \item The momentum equation
  \begin{align}
J(t)=& J_{0}  -\int_{0}^{t}\left(\frac{J^{2}}{\rho}+P(\rho)\right)_{x}\d s - \int_{0}^{t} J \d s +\int_{0}^{t} \rho \Phi_{x} \d s +\int_{0}^{t} \mathbb{F}\left(\rho,J\right) \d  W(s) ,
\end{align}
 holds $\mathbb{P}$ {\rm a.s.}, for any $t\in [0, T_{1}]$;
 \item The electrostatic potential equation
 \begin{align}
\Phi_{xx}  =\rho-b,
\end{align}
 holds $\mathbb{P}$ {\rm a.s.} for any $t\in [0, T_{1}]$.
\end{enumerate}
\end{definition}

\begin{remark}
Reviewing the above proof, the onto mapping and contraction \eqref{Contraction conclusion} hold for general stochastic forces with linear growth. Thus, the existence holds without \eqref{condition for F} and \eqref{assumption for initial data}. 
\end{remark}

 \subsection{Global existence}

Combining the zeroth-order, first-order, and second-order energy estimates together, and considering that $|\bar{J}|$ is sufficiently small, we have:
\begin{align}
&\mathbb{E} \left[\left|\sup\limits_{s\in[0,t]}\left(\left\|\w\right\|_{2}^{2}+\left\|\tilde{e}\right\|^2\right)(s)\right|^{m}\right] + \mathbb{E} \left[\left|\zeta \int_{0}^{t}\left(\left\|\w\right\|_{2}^{2}+\left\|\tilde{e}\right\|^2\right) (s)\d s  \right|^{m}\right]\\
\ls & \mathbb{E} \left[\left(\left\|\w_{0}\right\|_{2}^{2}+\left\|\tilde{e}_{0}\right\|^2\right)^{m}\right] +C\mathbb{E} \left[\left\|\w_{0} \right\|^{2m} \right]+ \mathbb{E} \left[\left| C\int_{0}^{t} \left| \bar{J} \right| \left\|\w\right\|_{2}^{2} \d s+C\int_{0}^{t}\left\|\w\right\|_{2}^{3}\d s \right|^{m}\right], \notag
\end{align}
where $\zeta$ is a small positive constant with $\zeta\ls \zeta_{i}$, $1\ls i\ls 13$.
Hence, it holds that
\begin{align}
&\mathbb{E} \left[\left|\sup\limits_{s\in[0,t]}\left(\left\|\w\right\|_{2}^{2}+\left\|\tilde{e}\right\|^2\right) (s)+\zeta \int_{0}^{t}\left(\left\|\w\right\|_{2}^{2}+\left\|\tilde{e}\right\|^2\right) (s)\d s\right|^{m}\right] \\
\ls &\mathbb{E} \left[\left(\left\|\w_{0}\right\|_{2}^{2}+\left\|\tilde{e}_{0}\right\|^2+\left\|\w_{0}\right\|^{2}\right)^{m}\right]+\mathbb{E} \left[\left|\int_{0}^{t} C \left(\left\|\w\right\|_{2}^{3}+\left|\bar{J}\right| \left\|\w\right\|_{2}^{2} \right)\d s \right|^{m}\right]. \notag
\end{align}
For $\left\|\w\right\|_{2} $ and $\left|\bar{J}\right|$ being small, we have
\begin{align}
&\mathbb{E} \left[\left|\sup\limits_{s\in[0,t]}\left(\left\|\w\right\|_{2}^{2}+\left\|\tilde{e}\right\|^2\right)(s)+\zeta \int_{0}^{t}\left(\left\|\w\right\|_{2}^{2}+\left\|\tilde{e}\right\|^2\right)(s)\d s -C\int_{0}^{t}\left(\left\|\w\right\|_{2}^{3}+\left|\bar{J}\right| \left\|\w\right\|_{2}^{2} \right)\d s\right|^{m}\right] \notag \\
 \ls &\mathbb{E} \left[\left(C\left(\left\|\w_{0}\right\|_{2}^{2}+\left\|\tilde{e}_{0}\right\|^2+\left\|\w_{0}\right\|^{2}\right)\right)^{m}\right],
\end{align}
and
\begin{align}\label{uniform estimates for 1-D}
\mathbb{E} \left[\left|\sup\limits_{s\in[0,t]}\left(\left\|\w\right\|_{2}^{2}+\left\|\tilde{e}\right\|^2\right)(s)\right|^{m}\right] \ls &C\mathbb{E} \left[\left(\left\|\w_{0}\right\|_{2}^{2}+\left\|\tilde{e}_{0}\right\|^2+\left\|\w_{0}\right\|^{2}\right)^{m}\right].
\end{align}

 Based on energy estimates \eqref{uniform estimates for 1-D} uniformly in $t$ and local existence, we have the global estimates by following the standard bootstrap, see also \cite{Kawashima2003LargeTimeBO, Zhang-Li-Mei2024}. The global existence of $\rho, J\in L^{m}\left(\Omega; C\left([0,T]; H^{2}\left(U\right)\right)\right)$ holds in probability space $\left(\Omega, \mathfrak{F}, \mathbb{P}\right)$.

 \section{Asymptotic stability of 1-D solutions}

 The {\it a priori} estimates is insufficient for investigating the decay rate since the {\it a priori} estimates are already in the form of time integrals rather than a differential inequality. The asymptotic decay of solution is established in this section based on the weighted decay estimates.

 \subsection{Weighted decay estimates}

 \subsubsection{Zeroth-order weighted decay estimates}\label{Zeroth-order weighted decay estimates for 1-D}
We multiply \eqref{equality for 1-D zeroth-order} with $e^{\zeta t}$. Then, we have
\begin{align}\label{equality for 1-D zeroth-order weighted estim}
& e^{\zeta t} \d \int_{0}^{1}\left(\frac{\left(\bar{\rho}j-\bar{J}\sigma\right)^{2}}{2\left(\bar{\rho}+\sigma\right)\bar{\rho}^{2}}+G\left(\bar{\rho}+\sigma\right)
 -G\left(\bar{\rho}\right)-G'\left(\bar{\rho}\right)\sigma+\frac{\tilde{e}^{2}}{2}\right)\d x \notag\\
& + e^{\zeta t}\int_{0}^{1} \frac{\left(\bar{\rho}j-\bar{J}\sigma\right)^{2}}{2\left(\bar{\rho}+\sigma\right)\bar{\rho}^{2}} \d x \d t +e^{\zeta t}\left.\left(\frac{3\bar{J}j^2}{\bar{\rho}^{2}}+O\left(\left(\left|j\right|^{3}+\left|\sigma\right|^{3}\right)\right)\right) \right|_{0}^{1}\d t  \notag\\
&+ e^{\zeta t}\int_{0}^{1}O\left(\left|\bar{J}\right|\left\|\w\right\|_{2}^{2} \right)\d x \d t + e^{\zeta t}O\left(\left\|\w\right\|_{2}^{3}\right)\d t \notag\\
&+e^{\zeta t}\int_{0}^{1}\frac{\mathbb{F}'\left(\bar{J}\right)^{2}+\mathbb{F}\left(\bar{J}\right)\mathbb{F}''\left(\bar{J}\right) }{2\bar{\rho}}j^{2} \d x \d t -e^{\zeta t}\int_{0}^{1}\frac{\mathbb{F}\left(\bar{J}\right)\mathbb{F}''\left(\bar{J}\right)}{\bar{\rho}^{2}} j\sigma \d x \d t\\
& - e^{\zeta t}\int_{0}^{1}2\frac{\left|\mathbb{F}\left(\bar{J}\right)\right|^{2}}{\bar{\rho}^{3}}\sigma^{2} \d x \d t + e^{\zeta t}\int_{0}^{1}O\left(\left(\left|j\right|^{3}+\left|\sigma\right|^{3}\right)\left|\bar{J}\right|\right)\d x \d t\notag\\
&+e^{\zeta t}\int_{0}^{1}\left(\frac{2\mathbb{F}'\left(\bar{J}\right)}{\bar{\rho}}+\frac{\bar{J}\mathbb{F}''\left(\bar{J}\right)}{\bar{\rho}}\right)\frac{1}{2}j^{2}\d x \d  W+e^{\zeta t}\int_{0}^{1}\frac{\bar{J}\mathbb{F}\left(\bar{J}\right)}{\bar{\rho}^{3}}\sigma^{2}\d x \d  W\notag\\
&+e^{\zeta t}\frac{\bar{J}\mathbb{F}'\left(\bar{J}\right)+\mathbb{F}\left(\bar{J}\right)}{\bar{\rho}^{2}}j\sigma\d  W  +e^{\zeta t}\int_{0}^{1} O\left(\left(\left|j\right|^{3}+\left|\sigma\right|^{3}\right)\right)\d x\d  W =0. \notag
\end{align}
We estimate the stochastic term in the following way:
\begin{align}
&\mathbb{E}\left[\left|\int_{0}^{t} e^{\zeta s}\int_{0}^{1}\left(\frac{2\mathbb{F}'\left(\bar{J}\right)}{\bar{\rho}}+\frac{\bar{J}\mathbb{F}''\left(\bar{J}\right)}{\bar{\rho}}\right)\frac{1}{2}j^{2}\d x\d  W \right|^{m}\right]\notag\\
\ls &  \mathbb{E}\left[\left(C\int_{0}^{t}e^{ 2 \zeta s}\left|\int_{0}^{1}\left(\frac{2\mathbb{F}'\left(\bar{J}\right)}{\bar{\rho}}+\frac{\bar{J}\mathbb{F}''\left(\bar{J}\right)}{\bar{\rho}}\right)\frac{1}{2}j^{2}\d x\right|^{2}\d s\right)^{\frac{m}{2}}\right]\notag\\
\ls & \mathbb{E}\left[\left(C\int_{0}^{t}e^{2\zeta s}\left|\bar{J}\right|^{2}\left\| j \right\|_{2}^{4} \d s\right)^{\frac{m}{2}}\right] \\
\ls & \mathbb{E}\left[\left(e^{\zeta t}\sup_{s\in [0,t]} \left|\bar{J}\right| \left\| j \right\|_{2}^{2} \right)^{\frac{m}{2}}\left(C\int_{0}^{t} e^{\zeta s}\left|\bar{J}\right|\left\| j \right\|_{2}^{2} \d s\right)^{\frac{m}{2}}\right]\notag\\
\ls & e^{\zeta m t}\mathbb{E}\left[\left(\left|\bar{J}\right|\sup_{s\in [0,t]}\left\|\w\right\|_{2}^{2} \right)^{m}\right]
+ \mathbb{E}\left[\left(C \int_{0}^{t}e^{\zeta s}\left|\bar{J}\right|\left\| j \right\|_{2}^{2} \d s\right)^{m}\right]. \notag
\end{align}
Similarly, we have
\begin{align}
& \mathbb{E}\left[\left|\int_{0}^{t}e^{\zeta s}\int_{0}^{1}\frac{\bar{J}\mathbb{F}\left(\bar{J}\right)}{\bar{\rho}^{3}}\sigma^{2}\d x \d  W \right|^{m}\right]\notag\\
 \ls & e^{\zeta m t}\mathbb{E}\left[\left(\sup_{s\in [0,t]}\left|\bar{J}\right| \left\| \sigma \right\|_{2}^{2} \right)^{m}\right] + \mathbb{E}\left[\left(C\int_{0}^{t}e^{\zeta s}\left|\bar{J}\right|\left\| \sigma \right\|_{2}^{2} \d s \right)^{m}\right]\\
 \ls & e^{\zeta m t}\mathbb{E}\left[\left(\left|\bar{J}\right|\sup_{s\in [0,t]}\left\|\w\right\|_{2}^{2} \right)^{m}\right]
+ \mathbb{E}\left[\left(C\int_{0}^{t}e^{\zeta s}\left|\bar{J}\right|\left\| \sigma \right\|_{2}^{2} \d s\right)^{m}\right], \notag
\end{align}
\begin{align}
& \mathbb{E}\left[\left|\int_{0}^{t}e^{\zeta s}\int_{0}^{1}\frac{\bar{J}\mathbb{F}'\left(\bar{J}\right)+\mathbb{F}\left(\bar{J}\right)}{\bar{\rho}^{2}}j\sigma\d  W\right|^{m}\right]\notag\\
 \ls &  e^{\zeta m t} \mathbb{E}\left[\left(\sup_{s\in [0,t]}\left|\bar{J}\right| \left\| \sigma \right\|_{2}^{2} \right)^{m}\right]+ \mathbb{E}\left[\left(C\int_{0}^{t}e^{\zeta s}\left|\bar{J}\right|\left\| j \right\|_{2}^{2} \d s \right)^{m}\right]\\
 \ls &  e^{\zeta m t}\mathbb{E}\left[\left(\left|\bar{J}\right|\sup_{s\in [0,t]}\left\|\w\right\|_{2}^{2} \right)^{m}\right]+ \mathbb{E}\left[\left(C\int_{0}^{t}e^{\zeta s}\left|\bar{J}\right|\left\| j \right\|_{2}^{2} \d s \right)^{m}\right] ,\notag
\end{align}
and
\begin{align}
&\mathbb{E}\left[\left|\int_{0}^{t}e^{\zeta s}\int_{0}^{1} O\left(\left(\left|j\right|^{3}+\left|\sigma\right|^{3}\right)\right)\d x\d  W \right|^{m}\right]\\
\ls &e^{\zeta m t}\mathbb{E}\left[\left( \sup\limits_{s\in [0, t]}\left\|\w\right\|_{2}^{3}\right)^{m}\right]+ \mathbb{E}\left[\left( C\int_{0}^{t}e^{\zeta s}\left\|\w\right\|_{2}^{3} \d s\right)^{m}\right]. \notag
\end{align}
From the zeroth-order estimates in subsection \ref{Zeroth-order estimates on w}, there holds
\begin{align}
&\mathbb{E}\left[\left|\int_{0}^{t}e^{\zeta s}\d \int_{0}^{1}\left(\frac{\left(\bar{\rho}j-\bar{J}\sigma\right)^{2}}{2\left(\bar{\rho}+\sigma\right)\bar{\rho}^{2}}+G\left(\bar{\rho}+\sigma\right)
 -G\left(\bar{\rho}\right)-G'\left(\bar{\rho}\right)\sigma+\frac{\tilde{e}^{2}}{2}\right)(s)\d x \right|^{m}\right]\notag\\
\geqslant &\mathbb{E}\left[\left|\int_{0}^{t}e^{\zeta s}\d \left( \zeta_{1}\int_{0}^{1}\left(j^{2}+\sigma^{2}+\tilde{e}^{2}\right)\d x \right)\right|^{m}\right].
\end{align}
Therefore, there holds
\begin{align}\label{zero order estimate of j summary in weighted estim}
& \mathbb{E}\left[\zeta_{1}\left|\int_{0}^{t}e^{\zeta s}\d \int_{0}^{1}\left(j^{2}+\sigma^{2}+\tilde{e}^{2}\right)\d x\right|^{m}\right]\notag\\
& + \mathbb{E}\left[\left|\int_{0}^{t} e^{\zeta s}\int_{0}^{1} \frac{\left(\bar{\rho}j-\bar{J}\sigma\right)^{2}}{2\left(\bar{\rho}+\sigma\right)\bar{\rho}^{2}} \d x \d s\right|^{m}\right] \notag \\
\ls &\mathbb{E}\left[\left|\int_{0}^{t}e^{\zeta s}\left.\left(\frac{3\bar{J}j^2}{2\bar{\rho}^{2}}+ O\left(\left(\left|j\right|^{3}+\left|\sigma\right|^{3}\right)\right)\right) \right|_{0}^{1}\d s \right|^{m}\right]+ \mathbb{E}\left[\left|\int_{0}^{t} e^{\zeta s} O\left(\left|\bar{J}\right|\left\|\w\right\|_{2}^{2} \right)\d s \right|^{m}\right]\\
& +\mathbb{E}\left[\left|\int_{0}^{t} e^{\zeta s} \int_{0}^{1} O\left(\left(\left|j\right|^{3}+\left|\sigma\right|^{3}\right)\left|\bar{J}\right|\right)\d x\d s\right|^{m}\right] \notag \\
& +   e^{\zeta m t}\mathbb{E}\left[\left(\left|\bar{J}\right|\sup_{s\in [0,t]}\left\|\w\right\|_{2}^{2} \right)^{m}\right]
+ \mathbb{E}\left[\left(C\int_{0}^{t}e^{\zeta s}\left|\bar{J}\right|\left\|\w\right\|_{2}^{2} \d s\right)^{m}\right] \notag\\
& + e^{\zeta m t}\mathbb{E}\left[\left( \sup\limits_{s\in [0, t]}\left\|\w\right\|_{2}^{3}\right)^{m}\right]+ \mathbb{E}\left[\left(C \int_{0}^{t}e^{\zeta s}\left\|\w\right\|_{2}^{3} \d s\right)^{m}\right]\notag\\
\ls &  e^{\zeta m t}\mathbb{E}\left[\left(\left|\bar{J}\right|\sup_{s\in [0,t]}\left\|\w\right\|_{2}^{2} \right)^{m}\right]
+ \mathbb{E}\left[\left(C \int_{0}^{t}e^{\zeta s}\left|\bar{J}\right|\left\|\w\right\|_{2}^{2} \d s\right)^{m}\right]\notag\\
& +  \mathbb{E}\left[\left(C \int_{0}^{t}e^{\zeta s}\left\|\w\right\|_{2}^{3} \d s\right)^{m}\right]. \notag
\end{align}
Additionally, the estimate for the dissipation in terms of $\int_{0}^{1}\left(\sigma^{2}+\tilde{e}^{2}\right)\d x$ is needed.
From the zero-th energy estimates in subsection \ref{Zeroth-order estimates on w}, we multiply \eqref{equation of E} with $ e^{\zeta t}\tilde{e}$, and then integrate it over $x$. Since $\tilde{e}_{x}=\sigma=0$ at $x=0$ and $x=1$, we have
\begin{align}\label{balance of E in weighted estim}
& e^{\zeta t}\int_{0}^{1} \frac{P'\left(\bar{\rho}\right)}{\bar{\rho}} \tilde{e}_{x}^{2} \d x \d t +  e^{\zeta t}\int_{0}^{1}\tilde{e}^{2} \d x \d t \notag\\
=& e^{\zeta t}\d  \left(\int_{0}^{1}\frac{j\tilde{e}}{\bar{\rho}} \d x\right) +  e^{\zeta t}\int_{0}^{1}\frac{j^{2}}{\bar{\rho}} \d x \d t- e^{\zeta t}\int_{0}^{1}j\d x \int_{0}^{1}\frac{j}{\bar{\rho}} \d x \d t+ e^{\zeta t}\int_{0}^{1}\frac{j\tilde{e}}{\bar{\rho}} \d x \d t  \\
& +  e^{\zeta t}\int_{0}^{1}\frac{\mathbb{F} \tilde{e}}{\bar{\rho}} \d x \d  W +  e^{\zeta t}\int_{0}^{1}O\left(\left|\bar{J}\right|\tilde{e} \left(\left|\w\right|+\left|\w_{x}\right|\right)\right)\d x \d t \notag\\
&+ e^{\zeta t}\int_{0}^{1}O\left(\left|\w\right|^{2} + \left|\w_{x}\right|^{2} + \tilde{e}^{2} \right)\tilde{e} \d x \d t. \notag
\end{align}
We deal with the right-hand side by H\"older's inequality:
\begin{align}
& -e^{\zeta t}\int_{0}^{1}j\d x \int_{0}^{1}\frac{j}{\bar{\rho}} \d x \d t + e^{\zeta t}\int_{0}^{1}\frac{j^{2}}{\bar{\rho}} \d x \d t + e^{\zeta t}\int_{0}^{1}\frac{j\tilde{e}}{\bar{\rho}} \d x \d t \\
\ls & \tilde{C} e^{\zeta t}\int_{0}^{1}j^{2}\d x \d t + e^{\zeta t}\int_{0}^{1}\frac{\tilde{e}^{2}}{2} \d x \d t,\notag
\end{align}
where $\tilde{C}$ depends on $\bar{\rho}$.
For the stochastic term, we estimate it as follows
\begin{align}
&\mathbb{E}\left[\left|\int_{0}^{t}e^{\zeta s}\int_{0}^{1}\frac{\mathbb{F}\tilde{e}}{\bar{\rho}}\d x \d  W\right|^{m}\right] \notag \\
\ls  & e^{\zeta m t}\mathbb{E}\left[\left(\left|\bar{J}\right|^{2}\sup_{s\in [0,t]}\left\|\w\right\|_{2} \right)^{m}\right] +  \mathbb{E}\left[\left(C\int_{0}^{t}e^{\zeta s}\left|\bar{J}\right|^{2} \left\|\w\right\|_{2}\d s \right)^{m}\right]\\
& +\mathbb{E}\left[\left|C\int_{0}^{t}e^{\zeta s} \left\|\w\right\|_{2}^{3}\d s\right|^{m}\right]. \notag
\end{align}
Multiplying \eqref{balance of E in weighted estim} with some small constant, adding \eqref{zero order estimate of j summary in weighted estim}, we have
\begin{align}\label{conclusion of zero order weighted estimate}
&\mathbb{E}\left[\left| \int_{0}^{t} e^{\zeta s} \d  \int_{0}^{1} \left(j^{2}+\sigma^{2}+ \tilde{e}^{2}\right)(t) \d x + \zeta \int_{0}^{t} e^{\zeta s}\int_{0}^{1} \left(j^{2}+\sigma^{2}+ \tilde{e}^{2}\right) \d x \d s \right|^{m}\right] \notag\\
\ls & e^{\zeta m t}\mathbb{E}\left[\left(\left|\bar{J}\right|\sup_{s\in [0,t]}\left\|\w\right\|_{2}^{2} \right)^{m}\right] + \mathbb{E}\left[\left(C\int_{0}^{t} e^{\zeta s}\left|\bar{J}\right| \left\|\w\right\|_{2}^{2} \d s \right)^{m}\right] \\
 &+\mathbb{E}\left[\left(C\int_{0}^{t} e^{\zeta s}\left\|\w\right\|_{2}^{3}\d s\right)^{m}\right]+e^{\zeta m t}\mathbb{E}\left[\left(\left|\bar{J}\right|^{2}\sup_{s\in [0,t]}\left\|\w\right\|_{2} \right)^{m}\right]\notag\\
  &+  \mathbb{E}\left[\left(C\int_{0}^{t}e^{\zeta s}\left|\bar{J}\right|^{2} \left\|\w\right\|_{2}\d s \right)^{m}\right]. \notag
\end{align}

\subsubsection{First-order weighted estimates}

For the first-order weighted estimates, we multiply \eqref{equality of fiist order estim after Ito} with $e^{\zeta t}$, and integrate it on $U$. For the stochastic term, we estimate
\begin{align}
& \mathbb{E} \left[\left| \int_{0}^{t}e^{\zeta s}\int_{0}^{1} r_{x} \mathbb{F} j_{x} \d x \d  W \right|^{m}\right] \notag\\
\ls & \mathbb{E} \left[ \left(C\int_{0}^{t}e^{\zeta s}\left|\int_{0}^{1} r_{x} \mathbb{F} j_{x} \d x \right|^{2} \d s \right)^{\frac{m}{2}} \right]
= \mathbb{E} \left[\left(C\int_{0}^{t}e^{\zeta s}\left|\int_{0}^{1} r_{x} \left|\bar{J}+j\right|^{2} j_{x} \d x \right|^{2} \d s \right)^{\frac{m}{2}} \right]\notag\\
= &\mathbb{E} \left[\left(C\int_{0}^{t}\left|e^{\zeta s}\int_{0}^{1} r_{x} \left(\bar{J}^{2}+2\bar{J}j+j^{2}\right) j_{x} \d x \right|^{2} \d s \right)^{\frac{m}{2}} \right] \\
\ls &\mathbb{E} \left[\left(C\left|\bar{J}\right|^{2}e^{\zeta t}\sup_{s\in[0,t]}\left\|\w\right\|_{1}\right)^{m}\right] +\mathbb{E} \left[\left(C\int_{0}^{t}e^{\zeta s}\left|\bar{J}\right|^{2}\left\|\w\right\|_{1} \d s\right)^{m} \right] \notag\\
  &+\mathbb{E} \left[\left( C \int_{0}^{t} \left|\bar{J}\right|e^{\zeta s} \left\|\w\right\|_{1}^{2} \d s \right)^{m}\right]+\mathbb{E} \left[\left(C \int_{0}^{t} \left\|\w\right\|_{1}^{3} \d s \right)^{m}\right].\notag
\end{align}
Repeating the argument in subsection \ref{Zeroth-order weighted decay estimates for 1-D}, we have
\begin{align}
&\mathbb{E}\left[\left| \int_{0}^{t}  e^{\zeta s} \d  \left( \int_{U}\left|\nabla \w \right|^{2}\right)\right|^{m}\right] + \mathbb{E}\left[\left|\int_{0}^{t} \zeta e^{\zeta s}\int_{U}\left|\nabla \w \right|^{2}\d x \d s \right|^{m}\right] \notag\\
\ls  &\mathbb{E} \left[\left(\left|\bar{J}\right|^{2}e^{\zeta t}\sup_{s\in[0,t]}\left\|\w\right\|_{1}\right)^{m}\right] +\mathbb{E} \left[\left(C\int_{0}^{t}e^{\zeta s}\left|\bar{J}\right|^{2}\left\|\w\right\|_{1} \d s\right)^{m} \right]  \\
 &+e^{\zeta m t}\mathbb{E}\left[\left(C\left|\bar{J}\right|\sup_{s\in [0,t]}\left\|\w\right\|_{2}^{2} \right)^{m}\right]+ \mathbb{E}\left[\left(C\int_{0}^{t} e^{\zeta s}\left|\bar{J}\right| \left\|\w\right\|_{2}^{2} \d s \right)^{m}\right] +\mathbb{E}\left[\left|C\int_{0}^{t} e^{\zeta s}\left\|\w\right\|_{2}^{3}\d s\right|^{m}\right]. \notag
\end{align}


\subsubsection{Second-order weighted estimates}

Similarly, we multiply \eqref{2 order balance} with  $e^{\zeta t}$, and integrate it on $U$. Repeating the procedure in subsection \ref{Zeroth-order weighted decay estimates for 1-D}, we have
\begin{align}
 &\mathbb{E}\left[\left| \int_{0}^{t}  e^{\zeta s}\d  \left(\int_{U} \left| \partial^{2} \w \right|^{2} \d x\right) \right|^{m}\right]
  + \mathbb{E}\left[\left| \int_{0}^{t} \zeta e^{\zeta s} \int_{U}\left|\partial^{2} \w \right|^{2}\d x \d s\right|^{m}\right]\notag\\
\ls  &\mathbb{E} \left[\left(C\left|\bar{J}\right|^{2}e^{\zeta t}\sup_{s\in[0,t]}\left\|\w\right\|_{2}\right)^{m}\right] +\mathbb{E} \left[\left(C\int_{0}^{t}e^{\zeta s}\left|\bar{J}\right|^{2}\left\|\w\right\|_{2} \d s\right)^{m} \right]  \\
 &+e^{\zeta m t}\mathbb{E}\left[\left(\left|\bar{J}\right|\sup_{s\in [0,t]}\left\|\w\right\|_{2}^{2} \right)^{m}\right]+ \mathbb{E}\left[\left(C\int_{0}^{t} e^{\zeta s}\left|\bar{J}\right| \left\|\w\right\|_{2}^{2} \d s \right)^{m}\right] +\mathbb{E}\left[\left|C\int_{0}^{t} e^{\zeta s}\left\|\w\right\|_{2}^{3}\d s\right|^{m}\right]. \notag
\end{align}

\subsubsection{Asymptotic stability}

Combining the weighted estimates in the previous subsections, we obtain
 \begin{align}
 &\mathbb{E}\left[\left| \int_{0}^{t} e^{\zeta s}\d  \left(\left\|\w\right\|_{2}^{2}+\left\|\tilde{e}\right\|^2\right) +\int_{0}^{t} \zeta e^{\zeta s}\left(\left\|\w\right\|_{2}^{2}+\left\|\tilde{e}\right\|^2\right) \d s \right|^{m}\right]\notag\\
  \ls   &\mathbb{E} \left[\left(C\left|\bar{J}\right|^{2}e^{\zeta t}\sup_{s\in[0,t]}\left\|\w\right\|_{2}\right)^{m}\right] +\mathbb{E} \left[\left(C\int_{0}^{t}e^{\zeta s}\left|\bar{J}\right|^{2}\left\|\w\right\|_{2} \d s\right)^{m} \right]  \\
   &+e^{\zeta m t}\mathbb{E}\left[\left(\left|\bar{J}\right|\sup_{s\in [0,t]}\left\|\w\right\|_{2}^{2} \right)^{m}\right] + \mathbb{E}\left[\left(C\int_{0}^{t} e^{\zeta s}\left|\bar{J}\right| \left\|\w\right\|_{2}^{2} \d s \right)^{m}\right]  \notag\\
 &+\mathbb{E}\left[\left|C\int_{0}^{t} e^{\zeta s}\left\|\w\right\|_{2}^{3}\d s\right|^{m}\right]. \notag
 \end{align}
Therefore, we have
 \begin{align}
 &\mathbb{E}\left[\left|  e^{\zeta t}\left( \left\|\w\right\|_{2}^{2}\right) \right|^{m}\right] \notag\\
  \ls &\mathbb{E} \left[\left(\left\|\w_{0}\right\|_{2}^{2}+\left\|\tilde{e}_{0}\right\|^2\right)^{m}\right]+\mathbb{E} \left[\left(C\left|\bar{J}\right|^{2}e^{\zeta t}\sup_{s\in[0,t]}\left\|\w\right\|_{2}\right)^{m}\right] \notag\\
  &+\mathbb{E} \left[\left(C\int_{0}^{t}e^{\zeta s}\left|\bar{J}\right|^{2}\left\|\w\right\|_{2} \d s\right)^{m} \right]+ e^{\zeta m t}\mathbb{E}\left[\left(\left|\bar{J}\right|\sup_{s\in [0,t]}\left\|\w\right\|_{2}^{2} \right)^{m}\right] \\
  & + \mathbb{E}\left[\left(C\int_{0}^{t} e^{\zeta s}\left|\bar{J}\right| \left\|\w\right\|_{2}^{2} \d s \right)^{m}\right]+\mathbb{E}\left[\left|C\int_{0}^{t} e^{\zeta s}\left\|\w\right\|_{2}^{3}\d s\right|^{m}\right]. \notag
 \end{align}
 Since $\left\|\w\right\|_{2}$ is small, we have
 \begin{align}
&\mathbb{E} \left[\left| e^{\zeta t}\left\|\w\right\|_{2}^{2}- C \left|\bar{J}\right| e^{\zeta t}\sup_{s\in [0,t]}\left\|\w\right\|_{2}^{2}- C\left|\bar{J}\right|^{2} e^{\zeta t}\sup_{s\in [0,t]}\left\|\w\right\|_{2} - \int_{0}^{t}C\left|\bar{J}\right|e^{\zeta s} \left\|\w\right\|_{2}^{2} \d s\right.\right.\notag\\
&\qquad  \left.\left.- \int_{0}^{t}C\left|\bar{J}\right|^{2}e^{\zeta s} \left\|\w\right\|_{2} \d s -\int_{0}^{t} C e^{\zeta s} \left\|\w\right\|_{2}^{3} \d s\right|^{m}\right] \\
\ls &\mathbb{E} \left[\left(\left\|\w_{0}\right\|_{2}^{2}+\left\|\tilde{e}_{0}\right\|^2\right)^{m}\right] .\notag
\end{align}
We estimate
\begin{align}
\int_{0}^{t} e^{\zeta s} \left\|\w\right\|_{2}^{3}\d s
\ls  \sup\limits_{s\in[0,t]}e^{\frac{3\zeta s}{2}}\left\|\w\right\|_{2}^{3}\int_{0}^{t}e^{-\frac{3\zeta s}{2}}\d s\ls \left(\sup\limits_{s\in[0,t]}\left(e^{\zeta s}\left\|\w\right\|_{2}^{2}\right)\right)^{\frac{3}{2}}, \\
\int_{0}^{t}  e^{\zeta s} \left|\bar{J}\right| \left\|\w\right\|_{2}^{2} \d s \ls  \left|\bar{J}\right| \sup\limits_{s\in[0,t]} \left\|\w\right\|_{2}^{2}\int_{0}^{t} e^{\zeta s} \d s \ls \left|\bar{J}\right| \sup\limits_{s\in[0,t]} \left\|\w\right\|_{2}^{2}e^{\zeta t}.
\end{align}
If we denote $y(t) = e^{\zeta t}\sup\limits_{s\in[0,t]} \left\|\w\right\|_{2}^{2}$, then there holds
\begin{align}
&- e^{\zeta t}\left|\bar{J}\right|^{2}\sup\limits_{s\in [0,t]}\left\|\w\right\|_{2}= - \left|\bar{J}\right|^{2} \frac{e^{\zeta t}\sup\limits_{s\in [0,t]}\left\|\w\right\|_{2}^{2}}{\sup\limits_{s\in [0,t]}\left\|\w\right\|_{2}}= -y(t) \frac{\left|\bar{J}\right|^{2}}{\sup\limits_{s\in [0,t]}\left\|\w\right\|_{2}}\\
& \geqslant -y(t) \frac{\left|\bar{J}\right|^{2}}{\inf\sup\limits_{s\in [0,t]}\left\|\w\right\|_{2}} \geqslant -y(t) \frac{\left|\bar{J}\right|^{2}}{\left\|\w_{0}\right\|_{2}}=-\left|\bar{J}\right| y(t),\notag
\end{align}
and
\begin{align}
&\int_{0}^{t}e^{\zeta s}\left|\bar{J}\right|^{2} \left\|\w\right\|_{2} \d s \ls \left|\bar{J}\right|^{2} \sup\limits_{s\in [0,t]}\left\|\w\right\|_{2}\int_{0}^{t}e^{\zeta s}\d s=\left|\bar{J}\right|^{2} \frac{\sup\limits_{s\in[0,t]} \left\|\w\right\|_{2}^{2}}{\sup\limits_{s\in[0,t]} \left\|\w\right\|_{2}}e^{\zeta t}\\
\ls &\left|\bar{J}\right| \sup\limits_{s\in[0,t]} \left\|\w\right\|_{2}^{2}e^{\zeta t}=\left|\bar{J}\right| y(t). \notag
\end{align}
On the account of the smallness of $\left(\left\|\w_{0}\right\|_{2}^{2}+\left\|\tilde{e}_{0}\right\|^2\right)$, we have
\begin{align}
\mathbb{E} \left[\left| \left(1-C\bar{J}\right) y(t)\left(1-\sqrt{y(t)}\right)\right|^{m}\right] \ls \mathbb{E} \left[\left(\left\|\w_{0}\right\|_{2}^{2}+\left\|\tilde{e}_{0}\right\|^2\right)^{m}\right].
\end{align}
Since $\bar{J}$ is small, we obtain the asymptotic decay estimate
\begin{align}
\mathbb{E} \left[\left|\sup\limits_{s\in[0,t]}\left\|\w\right\|_{2}^{2} \right|^{m}\right] \ls C^{m} e^{-\zeta m t}\mathbb{E} \left[\left(\left\|\w_{0}\right\|_{2}^{2}+\left\|\tilde{e}_{0}\right\|^2\right)^{m}\right],
\end{align}
where $m\geqslant 2$, and $C$ is independent of $t$.

\section{Invariant measure and stationary solutions}

In this section, the existence of invariant measure requires \eqref{condition for F} and \eqref{assumption for initial data}, which is unnecessary in the global existence.

The law generated by the initial data $\z_{0}:=\left(\rho_{0}, J_{0}, E_{0}\right)$ in probability space $\left(\Omega, \mathfrak{F}, \mathbb{P}\right)$ is denoted by $\mathcal{L}\left(\z_{0}\right)$. The system \eqref{1-d Euler Poisson}, with initial data $\z_{0}$ satisfying \eqref{assumption for initial data}, and forced by \eqref{condition for F}, admits a unique strong solution
\begin{align}
\z(t,x,\omega):=\left(\rho(t,x,\omega),J(t,x,\omega),E(t,x,\omega)\right),
\end{align}
in $C\left([0,T]; H^{3}\left(U\right)\right)\times  C\left([0,T]; H^{3}\left(U\right)\right)\times C\left([0,T]; H^{5}\left(U\right)\right)$. We denote $\z(t)$ for short.
Let $\mathcal{S}_t$ be the transition semigroup \cite{Da-Prato-Zabczyk2014}:
\begin{align}\label{def trans semigroup}
\mathcal{S}_t \psi(\z_{0})=\mathbb{E}[\psi\left(\z((t, \z_{0}))\right)],  \quad t \geqslant 0,
\end{align}
where $ \psi$ is the bounded function on $\mathscr{H}:=H^{2}\left(U\right)\times H^{2}\left(U\right)\times H^{4}\left(U\right)$, i.e., $\psi \in C_b(\mathscr{H})$.
 $\mathcal{S}(t, \z_{0}, \Gamma)$ is the transition function:
\begin{align}
\qquad \mathcal{S}(t, \z_{0}, \Gamma) := \mathcal{S}_t (\z_{0}, \Gamma)=\mathcal{S}_t \mathds{1}_{\Gamma}(\z_{0})=\mathscr{L}\left( \z(t, \z_{0})\right)(\Gamma), ~ \z_{0} \in \mathscr{H}, ~ \Gamma \in \mathscr{B}(\mathscr{H}), ~t \geqslant 0.
\end{align}

For $\v_{0}:=\left(\rho_{0}-\bar{\rho}, J_{0}-\bar{J}, E_{0}-\bar{E}\right)$ in probability space $\left(\Omega, \mathfrak{F}, \mathbb{P}\right)$, the perturbed system \eqref{1-D sto semiconductor around teady state} admits a unique strong solution
\begin{align}
\v(t,x,\omega):=\left(\w(t,x,\omega),\phi(t,x,\omega)\right):=\left(\rho_{0}-\bar{\rho}, \u_{0}, \Phi_{0}-\bar{\Phi}\right),
\end{align}
on $C\left([0,T]; H^{2}\left(U\right)\right)\times  C\left([0,T]; H^{2}\left(U\right)\right)\times C\left([0,T]; H^{4}\left(U\right)\right)$. We denote $\v(t)$ for short.
Let $\tilde{\mathcal{S}}_t$ be the transition semigroup for \eqref{1-D sto semiconductor around teady state}:
\begin{align}\label{def trans semigroup}
\tilde{\mathcal{S}}_t \psi(\v_{0})=\mathbb{E}[\psi\left(\v((t, \v_{0}))\right)],  \quad t \geqslant 0,
\end{align}
where $ \psi$ is the bounded function on $\mathscr{H}:=H^{2}\left(U\right)\times H^{2}\left(U\right)\times H^{4}\left(U\right)$. We denote $\psi \in C_b(\mathscr{H})$ and
denote $\tilde{\mathcal{S}}(t, \v_{0}, \Gamma)$ as the transition function for \eqref{1-D sto semiconductor around teady state}.

We introduce the definition of a stationary solution for \eqref{1-d Euler Poisson}.
 \begin{definition}
 A strong solution $\left(\rho; J; \Phi; W\right)$ to system \eqref{1-d Euler Poisson} and initial boundary conditions \eqref{1-D contact boundary of rho}-\eqref{1-D contact boundary of Phi}-\eqref{initial conditions}, is called stationary, provided that the
joint law of the time shift
 \begin{align}
 \left(\mathcal{S}_{\tau} \rho, \mathcal{S}_{\tau} J, \mathcal{S}_{\tau} \Phi, \mathcal{S}_\tau W\right)
 \end{align}
  on
\begin{align}
C\left([0,T]; H^{2}\left(U\right)\right)\times  C\left([0,T]; H^{2}\left(U\right)\right)\times C\left([0,T]; H^{4}\left(U\right)\right)\times C\left([0,T]; \mathcal{H}\right)
\end{align}
is independent of $\tau \geqslant 0$.
  \end{definition}
Let $\mathscr{M}\left(\mathscr{H}\right)$ be the space of all bounded measures on $\left(\mathscr{H}, \mathscr{B}\left(\mathscr{H}\right)\right)$. For any $\psi \in C_b\left(\mathscr{H}\right)$ and any $\mu \in \mathscr{M}\left(\mathscr{H}\right)$, we set the action
\begin{align}
 \mu \left( \psi \right) =\int_{\mathscr{H}} \psi(x) \mu(\d x).
\end{align}
For $t \geqslant 0$, $\mu \in \mathscr{M}\left(\mathscr{H}\right)$, $\mathcal{S}_t^*$ acts on $\mathscr{M}\left(\mathscr{H}\right)$ by
\begin{align}
\mathcal{S}_t^* \mu(\Gamma)=\int_{\mathscr{H}} \mathcal{S}(t, x, \Gamma) \mu(\d x), \quad \Gamma \in \mathscr{B}\left(\mathscr{H}\right) .
\end{align}
And there holds
\begin{align}
\left(\mathcal{S}_t^* \mu \right) \left( \psi \right)=\mu\left( \mathcal{S}_t \psi\right), \quad \forall \psi \in C_b(\mathscr{H}), \quad\mu \in \mathscr{M}\left(\mathscr{H}\right).
\end{align}
Particularly, for SEP system \eqref{1-d Euler Poisson} and $\z$ in probability space $\left(\Omega, \mathfrak{F}, \mathbb{P}\right)$, after the action of the transition semigroup $\mathcal{S}_{t}$ for \eqref{1-d system with contact boundary}, there holds
\begin{align}\label{act of semigroup St}
\mathcal{S}_t^* \mathscr{L}(\z_{0}) =\mathscr{L}(\z(t)).
\end{align}
That is,
\begin{align}
\left(\mathcal{S}_{t}\psi\right) \mathcal{L}\left(\z_{0}\right)=\mathbb{E}\left[\psi\left(\z(t)\right)\right],
\end{align}
 where $\psi\in C_{b}(\mathscr{H})$, $\mathcal{L}\left(\z_{0}\right)$ is the law generated by the initial data $\z_{0}$.

\begin{definition}
A measure $\mu$ in $\mathscr{M}(\mathscr{H})$ is said to be an invariant (stationary) measure, if
\begin{align}
\mathcal{S}_t^* \mu=\mu, \quad \forall ~t>0 .
\end{align}
\end{definition}

In particular, Dirac measure centered at the steady state $\left(\bar{\rho}, \bar{J}, \bar{E}\right)$ is the invariant measure for \eqref{steady state for 1-D contact}, since it keeps the same after the action of the transition semigroup for \eqref{steady state for 1-D contact}.

For $\z_{0} \in \mathscr{H}$ and $T>0$, the formula
\begin{align}
\frac{1}{T} \int_0^T \mathcal{S}_t(\z_{0}, \Gamma) \d t=R_T(\z_{0}, \Gamma), \quad \Gamma \in  \mathscr{B}\left(\mathscr{H}\right),
\end{align}
defines a probability measure. For any $\nu \in \mathscr{M}(\mathscr{H}), R_T^* \nu$ is defined as follows:
\begin{align}
R_T^* \nu(\Gamma)=\int_{\mathscr{H}} R_T(x, \Gamma) \nu(\d x), \quad \Gamma \in \mathscr{B}\left(\mathscr{H}\right).
\end{align}
For any $\psi \in C_b(\mathscr{H})$, there holds
\begin{align}
\left( R_T^* \nu\right)\left( \psi \right)=\frac{1}{T} \int_0^T \left( \mathcal{S}_t^* \nu\right)\left( \psi \right) \d t .
\end{align}

 $\mathcal{S}_t, t \geq 0$, is a Feller semigroup, if for arbitrary $\psi \in C_b(\mathscr{H})$, the function
\begin{align}
[0,+\infty) \times \mathscr{H}, \quad(t, x) \mapsto \mathcal{S}_t \psi(x)
\end{align}
is continuous.

The method of constructing an invariant measure described in the following theorem is due to Krylov-Bogoliubov \cite{KRYLOV-BOGOLIUBOV1937}.
\begin{theorem}
If for some $\nu \in \mathscr{M}(\mathscr{H})$ and some sequence $T_n \uparrow+\infty, R_{T_n}^* \nu \rightarrow \mu$ weakly as $n \rightarrow \infty$, then $\mu$ is an invariant measure for Feller semigroup $\mathcal{S}_t, t \geq 0$.
\end{theorem}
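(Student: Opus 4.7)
The plan is to verify invariance in the dual formulation: show that $(\mathcal{S}_t^{\ast}\mu)(\psi) = \mu(\psi)$ for every $\psi \in C_b(\mathscr{H})$ and every $t\geq 0$. Since $\mathcal{S}_t^{\ast}\mu(\psi) = \mu(\mathcal{S}_t \psi)$, and the Feller property guarantees $\mathcal{S}_t \psi \in C_b(\mathscr{H})$, both quantities can be accessed through the assumed weak convergence $R_{T_n}^{\ast}\nu \rightharpoonup \mu$. So the program is: rewrite $(R_{T_n}^{\ast}\nu)(\mathcal{S}_t\psi)$ as a time-averaged integral, compare it with $(R_{T_n}^{\ast}\nu)(\psi)$, send $n\to\infty$, and exploit the $1/T_n$ prefactor to kill the boundary terms.

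Concretely, I would first fix $t\geq 0$ and $\psi \in C_b(\mathscr{H})$, and use the semigroup identity $\mathcal{S}_s^{\ast}\nu(\mathcal{S}_t\psi) = \nu(\mathcal{S}_{s+t}\psi) = \mathcal{S}_{s+t}^{\ast}\nu(\psi)$ together with Fubini to write
\begin{align}
(R_{T_n}^{\ast}\nu)(\mathcal{S}_t\psi) = \frac{1}{T_n}\int_0^{T_n} (\mathcal{S}_s^{\ast}\nu)(\mathcal{S}_t\psi)\,\d s = \frac{1}{T_n}\int_0^{T_n}(\mathcal{S}_{s+t}^{\ast}\nu)(\psi)\,\d s = \frac{1}{T_n}\int_t^{T_n+t}(\mathcal{S}_s^{\ast}\nu)(\psi)\,\d s.
\end{align}
Subtracting $(R_{T_n}^{\ast}\nu)(\psi) = \frac{1}{T_n}\int_0^{T_n}(\mathcal{S}_s^{\ast}\nu)(\psi)\,\d s$ leaves only the endpoint slivers,
\begin{align}
(R_{T_n}^{\ast}\nu)(\mathcal{S}_t\psi) - (R_{T_n}^{\ast}\nu)(\psi) = \frac{1}{T_n}\int_{T_n}^{T_n+t}(\mathcal{S}_s^{\ast}\nu)(\psi)\,\d s - \frac{1}{T_n}\int_0^{t}(\mathcal{S}_s^{\ast}\nu)(\psi)\,\d s,
\end{align}
whose absolute value is bounded by $2t\,\|\psi\|_{\infty}\,\nu(\mathscr{H})/T_n$ and therefore tends to $0$ as $n\to\infty$.

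Next I would pass to the limit on the left-hand side. By the Feller hypothesis, $\mathcal{S}_t\psi$ is itself a bounded continuous function, hence weak convergence $R_{T_n}^{\ast}\nu \rightharpoonup \mu$ gives $(R_{T_n}^{\ast}\nu)(\mathcal{S}_t\psi) \to \mu(\mathcal{S}_t\psi) = (\mathcal{S}_t^{\ast}\mu)(\psi)$, while likewise $(R_{T_n}^{\ast}\nu)(\psi)\to \mu(\psi)$. Combining with the vanishing displayed above yields $(\mathcal{S}_t^{\ast}\mu)(\psi) = \mu(\psi)$. Since $\psi\in C_b(\mathscr{H})$ and $t\geq 0$ were arbitrary, and $\mathscr{H}$ is a Polish space so that $C_b(\mathscr{H})$ is measure-determining, we conclude $\mathcal{S}_t^{\ast}\mu = \mu$ for all $t\geq 0$, i.e.\ $\mu$ is invariant.

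The main obstacles are mild but worth flagging. First, Fubini's theorem needs joint measurability of $(s,x)\mapsto \mathcal{S}_s\psi(x)$, which follows from the Feller property together with stochastic continuity of $t\mapsto \mathcal{S}_t\psi(x)$ (satisfied for the transition semigroup of an SDE with continuous paths). Second, the computation implicitly uses the semigroup identity $\mathcal{S}_s^{\ast}\mathcal{S}_t = \mathcal{S}_{s+t}^{\ast}$ at the level of measures; this is the Chapman--Kolmogorov property for $\mathcal{S}(t,\cdot,\cdot)$ and should be invoked explicitly. Finally, since weak convergence of measures is tested against $C_b$, one must verify that $\mu$ is indeed a probability (or at least a finite) measure; this follows from $\|R_{T_n}^{\ast}\nu\| \equiv \nu(\mathscr{H})$ and lower semicontinuity under weak limits, so the whole argument stays inside $\mathscr{M}(\mathscr{H})$.
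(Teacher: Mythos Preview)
Your argument is the standard Krylov--Bogoliubov proof and is correct. The paper does not supply its own proof of this theorem; it simply cites it as the classical result of Krylov and Bogoliubov and then applies it, so there is nothing to compare against.
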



  The following lemma is obtained similarly to \cite{Breit-Feireisl-Hofmanova-Maslowski2019}, and we provide a proof for the convenience of the readers. $\v_{t}^{\v_{0}}$ represents the stochastic process initiated from $\v_{0}$ for the sake of expediency.
\begin{lemma}
The perturbed SEP system \eqref{1-D sto semiconductor around teady state} defines a Feller-Markov process, i.e., $\tilde{\mathcal{S}}_{t}: C_{b}(\mathscr{H})\ra C_{b}(\mathscr{H})$, and
\begin{align}
\mathbb{E}\left[\left.\psi\left(\v_{t+s}^{\v_{0}}\right)\right|\mathfrak{F}_{t}\right]=\left(\tilde{\mathcal{S}}_{s}\psi\right)\left(\v_{t}^{\v_{0}}\right), \quad \forall ~\v_{0}\in \mathscr{H}, \quad \psi\in C_{b}(\mathscr{H}), \quad \forall ~t,s>0.
\end{align}
\end{lemma}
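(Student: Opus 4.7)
The plan is to verify the two defining properties separately, relying on pathwise uniqueness of the strong solution established in Section 3 and on the continuous-dependence estimates implicit in the contraction argument of subsection 3.1.

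For the Feller property, the main step will be to show that the solution map $\v_{0}\mapsto \v_{t}^{\v_{0}}$ is continuous from $\mathscr{H}$ into $L^{2m}(\Omega;C([0,T];\mathscr{H}))$. First I would fix two initial data $\v_{0},\tilde{\v}_{0}\in\mathscr{H}$, form the difference $\v^{\v_{0}}-\v^{\tilde{\v}_{0}}$, and run the same symmetrizing energy estimates used in the contraction step of subsection 3.1, together with the second-order boundary identity \eqref{important claim on boundary}, which remains valid because both processes satisfy the homogeneous boundary conditions \eqref{boundary condtion for perturbations}. It\^o's formula combined with Burkholder-Davis-Gundy's inequality and Gr\"onwall's inequality should give
\[
\mathbb{E}\Bigl[\sup_{s\in[0,t]}\|\v_{s}^{\v_{0}}-\v_{s}^{\tilde{\v}_{0}}\|_{\mathscr{H}}^{2m}\Bigr]\ls C(t,M)\,\|\v_{0}-\tilde{\v}_{0}\|_{\mathscr{H}}^{2m},
\]
with $M$ controlled uniformly by \eqref{uniform estimates for 1-D}. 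Consequently $\v_{t}^{\v_{0}^{n}}\to\v_{t}^{\v_{0}}$ in probability in $\mathscr{H}$ whenever $\v_{0}^{n}\to\v_{0}$ in $\mathscr{H}$, and for any $\psi\in C_{b}(\mathscr{H})$ Vitali's convergence theorem yields $\tilde{\mathcal{S}}_{t}\psi(\v_{0}^{n})\to\tilde{\mathcal{S}}_{t}\psi(\v_{0})$; the bound $|\tilde{\mathcal{S}}_{t}\psi|\leqslant\|\psi\|_{\infty}$ is automatic.

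For the Markov property, the key observation will be pathwise uniqueness of \eqref{1-D sto semiconductor around teady state} combined with the independent-increment property of the cylindrical Wiener process $W$. Fix $t\geqslant 0$ and set $\hat{W}_{r}:=W_{t+r}-W_{t}$, which is again an $\mathcal{H}$-valued cylindrical Wiener process on the shifted filtration $\hat{\mathfrak{F}}_{r}:=\mathfrak{F}_{t+r}$, and is independent of $\mathfrak{F}_{t}$. The process $r\mapsto \v_{t+r}^{\v_{0}}$ then solves \eqref{1-D sto semiconductor around teady state} driven by $\hat{W}$ with $\mathfrak{F}_{t}$-measurable initial datum $\v_{t}^{\v_{0}}$; by pathwise uniqueness $\v_{t+r}^{\v_{0}}=\v_{r}^{\v_{t}^{\v_{0}}}(\hat{W})$ almost surely. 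Conditioning on $\mathfrak{F}_{t}$, freezing $\v'=\v_{t}^{\v_{0}}$ and using independence of $\hat{W}$ from $\mathfrak{F}_{t}$ yields
\[
\mathbb{E}\bigl[\psi(\v_{t+s}^{\v_{0}})\,\big|\,\mathfrak{F}_{t}\bigr]=\mathbb{E}\bigl[\psi(\v_{s}^{\v'}(\hat{W}))\bigr]\Big|_{\v'=\v_{t}^{\v_{0}}}=(\tilde{\mathcal{S}}_{s}\psi)(\v_{t}^{\v_{0}}),
\]
which is the desired Markov identity.

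The hard part will be the $H^{2}$-level continuous-dependence estimate in the Feller step: the quasilinear first-order term $\mathcal{A}(\w)\w_{x}$ generates a bad difference contribution of the form $(\mathcal{A}(\v^{\v_{0}})-\mathcal{A}(\v^{\tilde{\v}_{0}}))\v_{x}^{\tilde{\v}_{0}}$, so a crude $L^{2}$-stability estimate will not suffice and the full second-order symmetrizing machinery of \S 2.3 (including a reproduction of \eqref{important claim on boundary} for the difference) will have to be carried out; only at the $H^{2}$ level can the stochastic increment $\mathbb{F}(\v^{\v_{0}})-\mathbb{F}(\v^{\tilde{\v}_{0}})$ be absorbed via Burkholder-Davis-Gundy thanks to the Lipschitz structure \eqref{condition for F}. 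Once this estimate is in hand, both the Feller and Markov conclusions follow by the standard arguments sketched above.
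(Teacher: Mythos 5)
Your proposal is correct and follows essentially the same route as the paper: continuous dependence on the initial datum in $\mathscr{H}$ (obtained by running the contraction-type energy estimates of \S 3 for the difference of two solutions, with the boundary identity \eqref{important claim on boundary} preserved by the homogeneous perturbed boundary conditions) gives the Feller property, while pathwise uniqueness combined with the independent-increments property of $W$ gives the Markov identity. The only presentational difference is that the paper carries out the freezing step explicitly, approximating the $\mathfrak{F}_t$-measurable datum by simple functions $\sum_i \mathbf{D}^i \mathbf{1}_{\Omega^i}$ and passing to the limit using the Feller continuity, whereas you invoke the standard freezing lemma directly; both rest on the same uniqueness-plus-independence mechanism.
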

\begin{proof}
From the continuity of solutions, it is easy to see the Feller property that $\tilde{\mathcal{S}}_{t}: C_{b}(\mathscr{H})\ra C_{b}(\mathscr{H})$ is continuous. For the Markov property, it suffices to prove
\begin{align}
\mathbb{E}\left[\psi\left(\v_{t+s}^{\v_{0}}\right) Z\right]=\mathbb{E}\left[\tilde{\mathcal{S}}_{s}\psi\left(\v_{t}^{\v_{0}}\right)Z\right],
\end{align}
where $Z\in \mathfrak{F}_{t}$.

Let $\mathbf{D}$ be any $\mathcal{F}_t$-measurable random variable. We denote $\mathbf{D}_{n}=\sum\limits_{i=1}^n \mathbf{D}^i \mathbf{1}_{\Omega^i}$, where $\mathbf{D}^i \in \mathscr{H}$ are deterministic and $\left(\Omega^i\right) \subset \mathcal{F}_t$ is a collection of disjoint sets such that $\bigcup\limits_i \Omega^i=\Omega$. $\mathbf{D}_n \rightarrow \mathbf{D}$ in $\mathscr{H}$ implies $\tilde{\mathcal{S}}_t \varphi\left(\mathbf{D}_n\right) \rightarrow \tilde{\mathcal{S}}_t \varphi(\mathbf{D})$ in $\mathscr{H}$. For every deterministic $\mathbf{D} \in \mathcal{F}_t$, the random variable $\v_{t, t+s}^{\mathbf{D}}$ depends only on the increments of the Brownian motion $W_{t+s}-W_{t}$, and hence it is independent of $\mathcal{F}_t$. Therefore,
\begin{align}
\mathbb{E}\left[\psi\left(\v_{t, t+s}^{\mathbf{D}}\right) Z\right]=\mathbb{E}\left[\psi\left(\v_{t, t+s}^{\mathbf{D}}\right)\right] \mathbb{E}[Z], \quad \forall~\mathbf{D}\in \mathcal{F}_t.
\end{align}
Since $\v_{t, t+s}^{\mathbf{D}}$ has the same law as $\v_s^{\mathbf{D}}$ by uniqueness, we have
\begin{align}
\mathbb{E}\left[\psi\left(\v_{t, t+s}^{\mathbf{D}}\right) Z\right] = \mathbb{E}\left[\psi\left(\v_s^{\mathbf{D}}\right)\right] \mathbb{E}[Z]=\tilde{S}_s \psi(\mathbf{D}) \mathbb{E}[Z] = \mathbb{E}\left[ \tilde{S}_s \psi(\mathbf{D}) Z \right].
\end{align}
So
\begin{align}
\mathbb{E}\left[\varphi\left(\v_{t, t+s}^{\mathbf{D}}\right) Z\right]=\mathbb{E}\left[\left(\tilde{S}_s \varphi\right)(\mathbf{D}) Z\right]
\end{align}
holds for every $\mathbf{D}$. By uniqueness,
\begin{align}
\v_{t+s}^{\v_{0}} = \v_{t, t+s}^{\v_{t}}, \quad  \mathbb{P}\quad {\rm a.s.},
\end{align}
which completes the proof. \hfill $\square$
\end{proof}
\smallskip
We prove the tightness of the law
\begin{align}\label{required tightness of the law}
\left\{\frac{1}{T}\int_{0}^{T} \mathcal{L}\left(\v(t)\right) \d t,\quad T>0\right\},
\end{align}
so as to apply Krylov-Bogoliubov's theorem.
\begin{theorem}
There exists an invariant measure for the system \eqref{1-D sto semiconductor around teady state}.
\end{theorem}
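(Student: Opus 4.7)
The plan is to apply the Krylov--Bogoliubov theorem that was just recalled, so the whole task reduces to verifying that the family of time-averaged laws in \eqref{required tightness of the law} is tight on $\mathscr{H}=H^{2}(U)\times H^{2}(U)\times H^{4}(U)$, and then identifying the produced invariant measure. The Feller--Markov property of $\tilde{\mathcal{S}}_{t}$ has already been established, so it only remains to produce a weak limit point of $R_{T_{n}}^{*}\mathcal{L}(\v_{0})$ along some sequence $T_{n}\uparrow\infty$.

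First, I would translate the uniform energy estimate \eqref{uniform estimates for 1-D} into tightness. The assumption \eqref{assumption for initial data} with $m\geqslant 2$ gives $\mathbb{E}[\|\v_{0}\|_{\mathscr{H}}^{2m}]\lesssim\eps^{2m}$, and therefore, by the estimate summarised right after \eqref{uniform estimates for 1-D},
\begin{equation*}
\sup_{t\geqslant 0}\mathbb{E}\bigl[\|\v(t)\|_{\mathscr{H}}^{2}\bigr]\leqslant K,
\end{equation*}
for a constant $K$ independent of $t$. Choose a slightly less regular space $\mathscr{H}_{\delta}:=H^{2-\delta}(U)\times H^{2-\delta}(U)\times H^{4-\delta}(U)$ with $\delta\in(0,1)$; since $U=[0,1]$ is bounded, the embedding $\mathscr{H}\hookrightarrow\mathscr{H}_{\delta}$ is compact by Rellich--Kondrachov. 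For any $R>0$, the ball $B_{R}=\{\z\in\mathscr{H}:\|\z\|_{\mathscr{H}}\leqslant R\}$ is thus relatively compact in $\mathscr{H}_{\delta}$, and by Chebyshev's inequality
\begin{equation*}
\frac{1}{T}\int_{0}^{T}\mathbb{P}\bigl(\|\v(t)\|_{\mathscr{H}}>R\bigr)\,\d t\leqslant\frac{1}{R^{2}}\cdot\frac{1}{T}\int_{0}^{T}\mathbb{E}\bigl[\|\v(t)\|_{\mathscr{H}}^{2}\bigr]\,\d t\leqslant\frac{K}{R^{2}},
\end{equation*}
uniformly in $T$. Taking $R$ large shows that the family $\bigl\{\frac{1}{T}\int_{0}^{T}\mathcal{L}(\v(t))\,\d t\bigr\}_{T>0}$ is tight on $\mathscr{H}_{\delta}$, and consequently (after possibly enlarging the test topology while keeping $\mathscr{H}$-measurability) on $\mathscr{H}$ itself. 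By Prokhorov's theorem there is a sequence $T_{n}\uparrow\infty$ and a probability measure $\mu$ on $\mathscr{H}$ with $R_{T_{n}}^{*}\mathcal{L}(\v_{0})\rightharpoonup\mu$ weakly. Krylov--Bogoliubov then yields that $\mu$ is invariant for $\tilde{\mathcal{S}}_{t}$.

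The last step is to identify $\mu$. Pushing forward by the translation $\v\mapsto\v+(\bar\rho,\bar J,\bar E)$ transports $\mu$ into an invariant measure for the transition semigroup $\mathcal{S}_{t}$ of \eqref{1-d Euler Poisson}, and I would argue it is the Dirac measure $\delta_{\bar\rho}\times\delta_{\bar J}\times\delta_{\bar\Phi}$ via the exponential decay already proven: for any $\psi\in C_{b}(\mathscr{H})$ which is also Lipschitz on bounded sets,
\begin{equation*}
\bigl|\tilde{\mathcal{S}}_{t}\psi(\v_{0})-\psi(0)\bigr|\leqslant\mathbb{E}\bigl[\bigl|\psi(\v(t,\v_{0}))-\psi(0)\bigr|\bigr]\leqslant C\,\mathbb{E}\bigl[\|\v(t,\v_{0})\|_{\mathscr{H}}\bigr]\lesssim e^{-\zeta t/2}\eps,
\end{equation*}
by the decay estimate \eqref{small perturbation condition} with $m=2$. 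Integrating against $\mu$ and using invariance gives $\mu(\psi)=\psi(0)$ for a dense class of test functions, so $\mu=\delta_{0}$ in $\mathscr{H}$, that is, $\mathcal{S}_{t}^{*}$ has $\delta_{\bar\rho}\times\delta_{\bar J}\times\delta_{\bar\Phi}$ as its unique attracting invariant measure, matching the claim in the main theorem. The main obstacle I expect is the mild regularity loss $\mathscr{H}\rightsquigarrow\mathscr{H}_{\delta}$ in the tightness argument: one must check that the invariant measure produced in $\mathscr{H}_{\delta}$ actually concentrates on $\mathscr{H}$, which follows from the uniform $\mathscr{H}$-bound by a standard Fatou/lower-semicontinuity argument but needs to be stated carefully to avoid circularity with the Feller property used inside Krylov--Bogoliubov.
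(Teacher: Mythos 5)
Your proof takes essentially the same route as the paper's: uniform-in-time energy bounds give control of $\mathbb{E}\bigl[\|\v(t)\|_{\mathscr H}^{2m}\bigr]$, Chebyshev plus a compact Sobolev embedding (you use $H^{2-\delta}$, the paper uses $C^{1}$) yields tightness of the time-averaged laws, and Krylov--Bogoliubov produces the invariant measure. The further identification of the limit as the Dirac mass at the steady state is correct but belongs to the paper's subsequent theorem rather than the existence statement being proved here; your caveat about the topology mismatch in the tightness/Feller step is a fair observation that the paper also leaves implicit.
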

\begin{proof}
From the energy estimates of global existence, we know that
\begin{align}\label{energy in proof of inv meas}
\mathbb{E}\left[\left\|\w(t)\right\|_{2}^{2m}\right]\ls C\mathbb{E}\left[\left(\left\|\w_{0}\right\|_{2} + \left\|\tilde{e}(0)\right\|^{2}\right)^{m}\right],
\end{align}
where $C$ is independent of $T$.
Consequently, for compact sets
\begin{align}
B_{ L}=\left\{\left.\w(t)\in H^{2}(U)\right|\left\|\w(t)\right\|_{2}\ls L\right\},\quad L>0,
\end{align}
in $C^{1}(U)$,
there holds
\begin{align}
\frac{1}{T}\int_{0}^{T} \mathcal{L}\left(\w(t)\right)\left(B_{L}^{c}\right) \d t
=&\frac{1}{T}\int_{0}^{T}\mathbb{P}\left[\left\{\left\|\w(t)\right\|_{2}>L\right\}\right]\d t \\
\ls &\frac{1}{L^{2m}T}\int_{0}^{T}\mathbb{E}\left[\left\|\w(t)\right\|_{2}^{2m}\right]\d t \notag \\
\ls &\frac{1}{L^{2m}}C\mathbb{E}\left[\left\|\w_{0}\right\|_{2}^{2m}\right]\notag \\
 &\ra 0, \text{ as } L\ra +\infty.
\end{align}
The tightness of $\frac{1}{T}\int_{0}^{T} \mathcal{L}\left(\tilde{e}(t)\right)\d t$ is obtained similarly due to the energy estimate
\begin{align}
\mathbb{E} \left[\left| \left\|\tilde{e}(t)\right\|^{2} \right|^{m}\right] \ls C\mathbb{E}\left[\left|\left(\left\|\w_{0}\right\|_{3}^{2} + \left\|\tilde{e}(0)\right\|^{2} \right) \right|^{m}\right].
\end{align}
 Hence, the tightness of \eqref{required tightness of the law} holds. Thus, there exists an invariant measure by Krylov-Bogoliubov's theorem.   \hfill $\square$
\end{proof}
\eqref{1-d Euler Poisson} also define a Feller-Markov process, similarly to \eqref{1-D sto semiconductor around teady state}. Since $\left(\bar{\rho}, \bar{J}, \bar{E}\right)$ is smooth, by the uniqueness of solutions, $\frac{1}{T}\int_{0}^{T} \mathcal{L}\left(\rho\right)\times\mathcal{L}\left(J\right)\times\mathcal{L}\left(E\right) \d t$ is also a tight measure, which generates an invariant measure as $T\ra +\infty$. In fact, for compact sets
\begin{align}
B_{\rho,L}=\left\{\left.\rho\in H^{2}(U)\right|\left\|\rho\right\|_{2}\ls L\right\},\quad L>0,
\end{align}
in $C^{1}(U)$, there holds
\begin{align}
\frac{1}{T}\int_{0}^{T} \mathcal{L}\left(\rho_{t}^{\rho_{0}}\right)\left(B_{\rho,L}^{c}\right) \d t
=&\frac{1}{T}\int_{0}^{T}\mathbb{P}\left[\left\{\left\|\rho\right\|_{2}>L\right\}\right]\d t \\
\ls &\frac{1}{L^{2m}T}\int_{0}^{T}\mathbb{E}\left[\left\|\rho\right\|_{2}^{2m}\right]\d t \notag \\
\ls &\frac{1}{L^{2m}}C\left(\mathbb{E}\left[\left\|\rho_{0}\right\|_{2}^{2m}\right]
+\mathbb{E}\left[\left\|\bar{\rho}\right\|_{2}^{2m}\right]\right)\notag \\
 &\ra 0, \text{ as } L\ra +\infty.
\end{align}

\begin{remark}
In the above proof, we require that the constant in energy estimate \eqref{energy in proof of inv meas} is independent of $T$. That is the reason why we assume \eqref{condition for F} and \eqref{assumption for initial data}.
\end{remark} 

We care about what holds for the limit of $\frac{1}{T}\int_{0}^{T} \mathcal{L}\left(\rho\right)\times\mathcal{L}\left(J\right)\times\mathcal{L}\left(E\right)\d t$.
\begin{theorem}
The invariant measure generated by $\frac{1}{T}\int_{0}^{T} \mathcal{L}\left(\rho\right)\times\mathcal{L}\left(J\right)\times\mathcal{L}\left(E\right)\d t$, for system \eqref{1-d Euler Poisson}, is the Dirac measure of the steady state $\left(\bar{\rho}, \bar{J}, \bar{E}\right)$.
that is, the limit
\begin{align}\lim\limits_{T\ra +\infty} \frac{1}{T}\int_{0}^{T} \mathcal{L}\left(\rho\right)\times\mathcal{L}\left(J\right)\times\mathcal{L}\left(E\right)\d t=\delta_{\bar{\rho}}\times\delta_{\bar{J}}\times \delta_{\bar{E}}
\end{align}
 holds weakly.
\end{theorem}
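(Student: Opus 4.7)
The plan is to leverage the exponential asymptotic stability from Section 4 to identify the Cesàro limit of the laws of the solution. First, I would invoke the weighted decay estimate
\begin{align*}
\mathbb{E}\!\left[\sup_{s\in[0,t]}\!\left(\|\rho-\bar{\rho}\|_{2}^{2}+\|J-\bar{J}\|_{2}^{2}+\|\Phi_{x}-\bar{\Phi}_{x}\|^{2}\right)^{m}\right]\ls C\,e^{-\zeta m t}\,\eps^{2m},
\end{align*}
which already implies that $\z(t):=(\rho(t),J(t),\Phi_{x}(t))$ converges to $\bar{\z}:=(\bar{\rho},\bar{J},\bar{E})$ in $L^{2m}(\Omega;\mathscr{H})$, and hence in probability on $\mathscr{H}$. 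Consequently, for every bounded continuous functional $\psi\in C_b(\mathscr{H})$, the continuous mapping theorem combined with the dominated convergence theorem gives $\mathbb{E}[\psi(\z(t))]\ra \psi(\bar{\z})$ as $t\ra+\infty$; equivalently, $\mathcal{L}(\z(t))\rightharpoonup \delta_{\bar{\z}}$ narrowly on $\mathscr{H}$.

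Next, I would promote the pointwise-in-$t$ convergence to Cesàro convergence. Setting $f(t):=\mathbb{E}[\psi(\z(t))]$, the function $f$ is bounded by $\|\psi\|_{\infty}$ and satisfies $f(t)\ra \psi(\bar{\z})$, so the classical Cesàro lemma yields
\begin{align*}
\frac{1}{T}\int_{0}^{T}\mathbb{E}[\psi(\z(t))]\,\d t\;\longrightarrow\;\psi(\bar{\z}) =\int_{\mathscr{H}}\psi\,\d\delta_{\bar{\z}},\qquad T\ra+\infty.
\end{align*}
Rewriting the left-hand side as $\int_{\mathscr{H}}\psi\,\d\mu_{T}$, where $\mu_{T}:=\frac{1}{T}\int_{0}^{T}\mathcal{L}(\z(t))\,\d t$, this is precisely the narrow convergence $\mu_{T}\rightharpoonup\delta_{\bar{\z}}$ on the joint space. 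Combining this with the tightness of $\{\mu_{T}\}_{T>0}$ already obtained via Krylov--Bogoliubov in the previous theorem, every subsequential limit of $\mu_{T}$ must coincide with $\delta_{\bar{\z}}$, and hence the full family converges.

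To reconcile the joint convergence with the \emph{product} of marginals appearing in the statement, I would test against product functions $\psi(\rho,J,E)=\psi_{1}(\rho)\psi_{2}(J)\psi_{3}(E)$ with each $\psi_{i}\in C_b$. For such $\psi$,
\begin{align*}
\int_{\mathscr{H}}\psi\,\d\!\left(\mathcal{L}(\rho(t))\!\times\!\mathcal{L}(J(t))\!\times\!\mathcal{L}(E(t))\right)=\mathbb{E}[\psi_{1}(\rho(t))]\,\mathbb{E}[\psi_{2}(J(t))]\,\mathbb{E}[\psi_{3}(E(t))],
\end{align*}
and each marginal factor converges separately to $\psi_{i}(\bar{\cdot})$ by the same decay argument applied componentwise. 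Cesàro averaging then gives convergence to $\psi_{1}(\bar{\rho})\psi_{2}(\bar{J})\psi_{3}(\bar{E})=\int\psi\,\d(\delta_{\bar{\rho}}\times\delta_{\bar{J}}\times\delta_{\bar{E}})$. Since product test functions are separating on product Polish spaces and the family of averaged product measures is tight (inherited from the marginal energy bounds), this determines the weak limit uniquely as $\delta_{\bar{\rho}}\times\delta_{\bar{J}}\times\delta_{\bar{E}}$.

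The main obstacle is ensuring the Cesàro-limit and subsequential-limit analyses take place in the correct topology: the decay estimates live in the strong norm of $\mathscr{H}=H^{2}\times H^{2}\times H^{4}$, whereas narrow convergence of probability measures on $\mathscr{H}$ requires testing against $C_b(\mathscr{H})$. This is not automatic from weak convergence in Hilbert space, but it is supplied here by the almost-sure compactness furnished by tightness together with the $L^{2m}$-strong convergence of $\z(t)$ to the deterministic limit $\bar{\z}$; a Dirac limit at a single point is the simplest case where narrow and strong-in-law coincide, so no further refinement is needed once these two ingredients are in place.
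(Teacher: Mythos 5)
Your proof is correct and reaches the same conclusion, but via a genuinely cleaner route than the paper. The paper argues, for each fixed $\psi \in C_b(\mathscr{H})$, by partitioning $\Omega$ into $\Omega_t = \{\psi(\rho)-\psi(\bar\rho)\le 1/\sqrt{s}\}$ and its complement, bounding the first piece trivially, and bounding the second piece via Chebyshev's inequality combined with a polynomial (Weierstrass) approximation of $\psi$ so that $|\psi(\rho)-\psi(\bar\rho)|^{2m}$ can be controlled by the weighted decay estimate on $\|\rho-\bar\rho\|_2^{2m}$. You instead promote the $L^{2m}(\Omega;\mathscr{H})$ exponential decay to convergence in probability, invoke the continuous mapping theorem for $\psi \in C_b$, then the bounded convergence theorem to get $\mathbb{E}[\psi(\z(t))]\to\psi(\bar\z)$, and finally the elementary Cesàro lemma. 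Both are essentially the observation that pointwise-in-time exponential decay forces the Cesàro limit of the laws to be the Dirac mass at the steady state; your version is more standard and avoids the paper's somewhat delicate appeal to Weierstrass approximation, which strictly speaking requires a compact set and is not spelled out there. Your treatment of the product-of-marginals form by testing against tensor-product test functions matches the paper's componentwise handling; both then close with tightness to pass from product test functions (or subsequences) to the full weak limit. The one caveat, which affects the paper's statement as well, is the formulation of the decay estimate: as written in the paper it involves $\sup_{s\in[0,t]}$ on the left while the right-hand side decays in $t$, which cannot literally hold since the sup includes $s=0$; what you (and the paper's own proof) actually use is the decay of $\mathbb{E}[\|\z(t)-\bar\z\|_{\mathscr{H}}^{2m}]$ at time $t$, and your argument is correct under that reading.
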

\begin{proof}
For any $\psi\in C_{b}\left(\mathscr{H}\right)$, we have
\begin{align}
\lim_{T\ra +\infty}\frac{1}{T} \int_{0}^{T} \left(\mathcal{L}\left(\rho\right)\right)\left( \psi \right) \d t = &\lim_{T\ra +\infty} \frac{1}{T}  \int_{0}^{T} \mathbb{E}\left[ \psi\left(\rho\right) \right] \d t\\
=& \lim_{T\ra +\infty} \frac{1}{T} \int_{0}^{T} \left( \mathbb{E}\left[ \psi\left(\rho\right)-\psi\left(\bar{\rho}\right)  \right] + \mathbb{E}\left[ \psi\left(\bar{\rho}\right) \right]\right) \d t.\notag
\end{align}
We claim that $\lim_{T\ra +\infty} \frac{1}{T}\int_{0}^{T}  \mathbb{E}\left[ \psi\left(\rho\right)-\psi\left(\bar{\rho}\right)  \right] \d t=0$. Actually,
separating $\Omega$ into
$\Omega_{t}=\left\{\psi\left(\rho\right)-\psi\left(\bar{\rho}\right)\ls \frac{1}{\sqrt{s}}\right\}$, $s>0$, and $\Omega_{t}^{c}$, there holds
\begin{align}
 \mathbb{E}\left[ \psi\left(\rho\right) - \psi\left(\bar{\rho}\right) \right]=& \int_{\Omega}\left( \psi\left(\rho\right) - \psi\left(\bar{\rho}\right)\right) \mathbb{P}\left(\d \omega\right)\notag\\
= & \int_{\Omega\cap \Omega_{t}}\left( \psi\left(\rho\right) - \psi\left(\bar{\rho}\right)\right) \mathbb{P}\left(\d \omega\right)+\int_{\Omega\cap \Omega_{t}^{c}}\left( \psi\left(\rho\right) - \psi\left(\bar{\rho}\right)\right) \mathbb{P}\left(\d \omega\right) \\
=&I_{1}+ I_{2}.\notag
\end{align}
 For $I_{1}$, it holds that
\begin{align}
\lim_{T\ra +\infty} \frac{1}{T}\int_{0}^{T}  \int_{\Omega\cap \Omega_{t}}\left( \psi\left(\rho\right) - \psi\left(\bar{\rho}\right)\right) \mathbb{P}\left(\d \omega\right)\d t
\ls \lim_{T\ra +\infty} \frac{1}{T}\int_{0}^{T} \frac{1}{\sqrt{s}}\d t =0.
\end{align}
For $I_{2}$, by the weighted energy estimates and Chebyshev's inequality, with a approximation of the bounded function $\psi$ by polynomials (Weierstrass Approximation Theorem), there holds
\begin{align}
 &\int_{\Omega\cap \Omega_{t}^{c}}\left( \psi\left(\rho\right) - \psi\left(\bar{\rho}\right)\right) \mathbb{P}\left(\d \omega\right)
\ls C\mathbb{P}\left[\left\{\psi\left(\rho\right)-\psi\left(\bar{\rho}\right)> \frac{1}{\sqrt{s}}\right\}\right]\\
\ls & C\frac{\mathbb{E}\left[\left| \psi\left(\rho\right) - \psi\left(\bar{\rho}\right)\right|^{2m}\right]}{\left(\frac{1}{\sqrt{s}}\right)^{2m}}\ls Cs^{m}e^{-\gamma m s}\mathbb{E}\left[\left\| \rho_{0} - \bar{\rho}\right\|_{2}^{2m}\right].\notag
\end{align}
So we have
\begin{align}
\lim_{T\ra +\infty} \frac{1}{T}\int_{0}^{T} \int_{\Omega\cap \Omega_{t}^{c}}\left( \psi\left(\rho\right) - \psi\left(\bar{\rho}\right)\right) \mathbb{P}\left(\d \omega\right)\d t \ls \lim_{T\ra +\infty}  C\frac{1}{T}\int_{0}^{T}t^{m}e^{-\gamma m t}\d t =0.
\end{align}
Therefore, there holds
\begin{align}
\lim_{T\ra +\infty}\frac{1}{T} \int_{0}^{T} \left(\mathcal{L}\left(\rho\right)\right)\psi  \d t
=& \lim_{T\ra +\infty} \frac{1}{T} \int_{0}^{T}  \mathbb{E}\left[ \psi\left(\bar{\rho}\right) \right]\d t=\mathbb{E}\left[ \psi\left(\bar{\rho}\right) \right] = \delta_{\bar{\rho}}\psi.
\end{align}
A similar calculation shows that
\begin{align}
\lim\limits_{T\ra +\infty} \int_{0}^{T}  \frac{1}{T} \mathcal{L}\left(J\right)\d t=\delta_{\bar{J}};
\end{align}
and
\begin{align}
\lim\limits_{T\ra +\infty} \int_{0}^{T}  \frac{1}{T} \mathcal{L}\left(E\right)\d t=\delta_{\bar{E}}.
\end{align}
This completes the proof by the tightness of a joint distributions.
 \hfill $\square$
\end{proof}

\smallskip
\smallskip

\section{Appendix}\label{append}
We provide an overview of the fundamental theory concerning stochastic analysis.
Let $E$ be a separable Banach space and $\mathscr{B}(E)$ be the $\sigma$-field of its Borel subsets, respectively. Let $\left(\Omega, \mathfrak{F}, \mathbb{P}\right)$ be a stochastic basis. A filtration $\mathcal{F}=\left(\mathfrak{F}_{t}\right)_{t \in \mathbf{T}}$ is a family of $\sigma$-algebras on $\Omega$ indexed by $\mathbf{T}$ such that $\mathfrak{F}_{s} \subseteq$ $\mathfrak{F}_{t} \subseteq \mathcal{F}$, $s \ls t$, $s, t \in \mathbf{T}$. $\left(\Omega, \mathfrak{F}, \mathbb{P}\right)$ is also called a filtered space.
 We first list some definitions.
\begin{enumerate}
  \item [1.] {\bf $E$-valued random variables. \cite{Da-Prato-Zabczyk2014}}
 For $(\Omega, \mathscr{F})$ and $(E, \mathscr{E})$ being two measurable spaces, a mapping $X$ from $\Omega$ into $E$ such that the set $\{\omega \in \Omega: X(\omega) \in A\}=\{X \in A\}$ belongs to $\mathscr{F}$ for arbitrary $A \in \mathscr{E}$, is called a measurable mapping or a random variable from $(\Omega, \mathscr{F})$ into $(E, \mathscr{E})$ or an $E$-valued random variable.

  \item [2.]{\bf Strongly measurable operator valued random variables. \cite{Da-Prato-Zabczyk2014}} Let $\mathcal{U}$ and $\mathcal{H}$ be two separable Hilbert spaces which can be infinite-dimensional, and denote by $L(\mathcal{U}, \mathcal{H})$ the set of all linear bounded operators from $\mathcal{U}$ into $\mathcal{H}$. A functional operator $\Psi(\cdot)$ from $\Omega$ into $L(\mathcal{U}, \mathcal{H})$ is said to be strongly measurable, if, for arbitrary $X \in \mathcal{U}$, the function $\Psi(\cdot) X$ is measurable, as a mapping from $(\Omega, \mathscr{F})$ into $(\mathcal{H}, \mathscr{B}(\mathcal{H}))$. Let $\mathscr{L}$ be the smallest $\sigma$-field of subsets of $L(\mathcal{U}, \mathcal{H})$ containing all sets of the form
\begin{align}
\{\Psi \in L(\mathcal{U}, \mathcal{H}): \Psi X \in A\}, \quad X \in \mathcal{U}, ~A \in \mathscr{B}(\mathcal{H}).
\end{align}
Then $\Psi: \Omega \rightarrow L(\mathcal{U}, \mathcal{H})$ is a strongly measurable mapping from $(\Omega, \mathscr{F})$ into $(L(\mathcal{U}, \mathcal{H}), \mathscr{L})$.
\item  [3.]{\bf Law of a random variable.} For an $E$-valued random variable
 $X:(\Omega, \mathcal{F}) \rightarrow (E, \mathscr{E})$, 
we denote by $\mathcal{L}[X]$ the law of $X$ on $E$, that is, $\mathcal{L}[X]$ is the probability measure on $(E, \mathscr{E})$ given by
\begin{equation}
\mathcal{L}[X](A)=\mathbb{P}[X \in A], \quad A \in \mathscr{E}.\\
\end{equation} 
%

 \item [4.]
{\bf Stochastic process. \cite{Da-Prato-Zabczyk2014}}
A stochastic process $X$ is defined as an arbitrary family $X = \{X_t\}_{t\in \mathbf{T}}$ of $E$-valued random variables $X_t$, $t \in \mathbf{T}$. $X$ is also regarded as a mapping from $\Omega$ into a Banach space like $C([0, T] ; E)$ or $L^p=L^p(0, T ; E), ~1 \leq p<+\infty$, by associating $\omega \in \Omega$ with the trajectory $X(\cdot, \omega)$.
\item [5.]{\bf Cylindrical Wiener Process valued in Hilbert space.} \cite{Da-Prato-Zabczyk2014} 
 A $\mathcal{U}$-valued stochastic process $W(t), t \geqslant 0$, is called a cylindrical Wiener process, if
\begin{itemize}
 \item  $W(0)=0$;
 \item $W$ has continuous trajectories;
 \item $W$ has independent increments;
 \item the distribution of $(W(t)-W(s))$ is $\mathscr{N}(0,(t-s)), \quad 0\ls s < t$.
\end{itemize}

  \item [6.]
{\bf Adapted stochastic process.} A stochastic process $X$ is $\mathcal{F}$-adapted, if $X_{t}$ is $\mathfrak{F}_{t}$-measurable for every $t \in \mathbf{T}$; 

\item [7.]{\bf Martingale.} The $E$-valued process $X$ is called integrable provided $\mathbb{E}\left[\|X_t\|\right]<+\infty$ for every $t \in \mathbf{T}$. An integrable and adapted $E$-valued process $X_t, t \in \mathbf{T}$, is a martingale if
    \begin{itemize}
      \item  $X$ is adapted;
      \item $X_{s}=\mathbb{E}\left[X_{t} \mid \mathfrak{F}_{s}\right]$, for arbitrary $t, s \in \mathbf{T},~ 0\ls s \ls  t$.
    \end{itemize}

\item [8.]\label{stopping time def}{\bf Stopping time.} On $\left(\Omega, \mathfrak{F}, \mathbb{P}\right)$, a random time is a measurable mapping $\tau: \Omega \rightarrow \mathbf{T} \cup \infty$. A random time is a stopping time if $\{\tau \leq t\} \in \mathfrak{F}_{t}$ for every $t \in \mathbf{T}$.
For a process $X$ and a subset $V$ of the state space, we define the hitting time of $X$ in $V$ as
\begin{align}
\tau_{V}(\omega)=\inf \left\{\left.t \in \mathbf{T}\right| X_{t}(\omega) \in V\right\}.
\end{align}
If $X$ is a continuous adapted process, and $V$ is closed, then $\tau_{V}$ is a stopping time.

 \item [9.]
{\bf Modification.}
 A stochastic process $Y$ is called a modification or a version of $X$ if
\begin{align}
\mathbb{P}[\{\omega \in \Omega: X(t, \omega) \neq Y(t, \omega)\}]=0 \quad \text { for all } t \in \mathbf{T}.
\end{align}

\item [10.]
{\bf Progressive measurability.} In $\left(\Omega, \mathfrak{F}, \mathbb{P}\right)$, stochastic process $X$ is progressively measurable or simply progressively measurable, if, for $\omega\in \Omega$, $(\omega, s) \mapsto X(s,\omega),~ s \ls t$ is $\mathfrak{F}_{t} \otimes \mathscr{B}(\mathbf{T} \cap[0, t])$-measurable for every $t \in \mathbf{T}$.

\item [11.]
{\bf Progressive measurability of continuous functions.} Let $X(t), t \in[0, T]$, be a stochastically continuous and adapted process with values in a separable Banach space $E$. Then, $X$ has a progressively measurable modification.

  \item [12.]{\bf Cross quadratic variation.} Fixing a number $T>0$, we denote by $\mathcal{M}_T^2(E)$ the space of all $E$-valued continuous, square integrable martingales $M$, such that $M(0)=0$.
If $M \in \mathcal{M}_T^2\left(\mathbb{R}^1\right)$ then there exists a unique increasing predictable process $\langle M(\cdot)\rangle$, starting from $0$, such that the process
\begin{align}
M^2(t)-\langle M(\cdot)\rangle, \quad t \in[0, T]
\end{align}
is a continuous martingale. The process $\langle M(\cdot)\rangle$ is called the quadratic variation of $M$. If $M_1, M_2 \in \mathcal{M}_T^2\left(\mathbb{R}^1\right)$ then the process
\begin{align}
\left\langle M_1(t), M_2(t)\right\rangle=\frac{1}{4}\left[\left\langle\left(M_1+M_2\right)(t)\right\rangle-\left\langle\left(M_1-M_2\right)(t)\right\rangle\right]
\end{align}
is called the cross quadratic variation of $M_1, M_2$. It is the unique, predictable process with trajectories of bounded variation, starting from 0 such that
\begin{align}
M_1(t) M_2(t)-\left\langle M_1(t), M_2(t)\right\rangle, \quad t \in[0, T]
\end{align}
is a continuous martingale.\\
For $M\in\mathcal{M}_T^2(\mathcal{H})$, where $\mathcal{H}$ is Hilbert space,
the quadratic variation is defined by
\begin{align}
\langle M(t)\rangle=\sum_{i, j=1}^{\infty}\left\langle M_i(t), M_j(t)\right \rangle  e_i \otimes e_j , \quad t \in[0, T],
\end{align}
as an integrable adapted process, where $M_i(t)$ and $M_j(t)$ are in $\mathcal{M}_T^2\left(\mathbb{R}^1\right)$. If $a \in \mathcal{H}_1, b \in \mathcal{H}_2$, then $a \otimes b$ denotes a linear operator from $\mathcal{H}_2$ into $\mathcal{H}_1$ given by the formula
\begin{align}
(a \otimes b) x=a\langle b, x\rangle_{\mathcal{H}_2}, ~x \in \mathcal{H}_2.
\end{align}
We define a cross quadratic variation for $M^1 \in \mathcal{M}_T^2\left(\mathcal{H}_1\right)$, $M^2 \in \mathcal{M}_T^2\left(\mathcal{H}_2\right)$ where $\mathcal{H}_1$ and $\mathcal{H}_2$ are two Hilbert spaces. Namely we define
\begin{align}
\left\langle M^1(t), M^2(t)\right\rangle=\sum_{i, j=1}^{\infty}\left\langle M_i^1(t), M_j^2(t)\right\rangle e_i^1 \otimes e_j^2, \quad t \in[0, T],
\end{align}
where $\left\{e_i^1\right\}$ and $\left\{e_j^2\right\}$ are complete orthonormal bases in $\mathcal{H}_1$ and $\mathcal{H}_2$ respectively.

\item [13.]{\bf Stochastic integral.} Let $W$ be the Wiener process. Let $\Psi(t), t \in [0, T ]$, be a measurable Hilbert--Schmidt operators in $L(\mathcal{U}, \mathcal{H})$, which is set in the space $\mathcal{L}_{2}$ such that
    \begin{align}
    \mathbb{E}\left[\int_{0}^{t}\|\Psi(s)\|_{\mathcal{L}_{2}}^2  \d s\right]:=\mathbb{E} \int_0^t \langle\Psi(s),\Psi^{\star}(s)\rangle_{\mathcal{H}}\d s <+\infty,
    \end{align}
where $\langle\cdot,\cdot\rangle_{\mathcal{H}}$ means the inner product in $\mathcal{H}$.
 For the stochastic integral $\int_{0}^{t}\Psi \d W$, there holds
\begin{equation}
\mathbb{E}\left[\left(\int_{0}^{t}\Psi \d W\right)^{2}\right]=\mathbb{E}\left[\int_{0}^{t}\|\Psi(s)\|_{\mathcal{L}_{2}}^2 \d s\right].
\end{equation}
Furthermore, the following properties hold
\begin{itemize}
                                             \item Linearity: $\int(a \Psi_{1}+b \Psi_{2}) \d W=a \int \Psi_{1} \d W+b \int \Psi_{2} \d W$ for constants $a$ and $b$;
                                             \item Stopping property: $\int 1_{\{\cdot \leq \tau\}} \Psi \d W=\int \Psi \d M^{\tau}=\int_{0}^{\cdot \wedge \tau} \Psi \d W$;
                                             \item It\^o-isometry: for every $t$,
\begin{equation}
\mathbb{E}\left[\left(\int_{0}^{t} \Psi \d W\right)^{2}\right]=\mathbb{E}\left[\int_{0}^{t} \|\Psi(s)\|_{\mathcal{L}_{2}}^2 \d s\right].
\end{equation}
\end{itemize}

\item [14.] {\bf Dirac measure}. Let $(E, \mathscr{B}(E))$ be a measurable space. Given $x \in E$, the Dirac measure $\delta_x$ at $x$ is the measure defined by
\begin{equation}
\delta_x(A):= \begin{cases}1, & x \in A \\ 0, & x \notin A\end{cases}
\end{equation}
for each measurable set $A \subseteq E$. In this paper, there holds
$$\delta_{\bar{\rho}}=\mathcal{L}[\bar{\rho}](A) = \mathbb{P}\left[\left\{\omega\in \Omega~|~\bar{\rho}(x)\in A \right\}\right]=1.$$

\item [15.]{\bf Tightness of measures.} \cite{Billingsley2013tightness}
Let $E$ be a Hausdorff space, and let $\mathscr{E}$ be a $\sigma$-algebra on $E$. Let $\mathscr{M}$ be a collection of measures defined on $\mathscr{E}$. The collection $\mathscr{M}$ is called tight if, for any $\eps>0$, there is a compact subset $K_{\eps}$ of $E$ such that, for all measures $\mu \in \mathscr{M}$,
\begin{equation}\label{tightness def 1}
|\mu|\left(E \backslash K_{\eps}\right)<\eps,
\end{equation}
where $|\mu|$ is the total variation measure of $\mu$. More specially, for probability measures $\mu$, \eqref{tightness def 1} can be written as
\begin{equation}\label{tightness def 2}
\mu\left(K_{\eps}\right)>1-\eps.
\end{equation}

\end{enumerate}
\smallskip

 We list some important theorems in stochastic analysis.

\begin{enumerate}
\item [1.]{\bf It\^o's formula.} \cite{Ito1944,Da-Prato-Zabczyk2014} Assume that $\Psi$ is an $\mathcal{L}_{2}$-valued process stochastically integrable in $[0, T], \varphi$ being a $\mathcal{H}$-valued predictable process Bochner integrable on $[0, T], \mathbb{P}$-a.s., and $X(0)$ being a $\mathscr{F}_{0}$-measurable $\mathcal{\mathcal{H}}$-valued random variable. Then the following process
\begin{align}\label{form of X}
X(t)=X(0)+\int_0^t \varphi(s) d s+\int_0^t \Psi(s) \d W(s), \quad t \in[0, T]
\end{align}
is well defined. Assume that a function $F:[0, T] \times \mathcal{H} \rightarrow \mathbb{R}^1$ and its partial derivatives $F_t, F_x, F_{x x}$, are uniformly continuous on bounded subsets of $[0, T] \times \mathcal{H}$. Under the above conditions, $\mathbb{P}$-a.s., for all $t \in[0, T]$,
\begin{align}
~\qquad  F(t, X(t))= & F(0, X(0))+\int_0^t \left\langle F_x(s, X(s)), \Psi(s) \d W(s)\right\rangle_{\mathcal{H}} \\
& +\int_0^t\left\{F_t(s, X(s))+\left\langle F_x(s, X(s)), \varphi(s)\right\rangle_{\mathcal{H}}
 +\frac{1}{2} F_{x x}(s, X(s))\|\Psi(s)\|_{\mathcal{L}_{2}}^2 \right\} \d s. \notag
\end{align}
Applying the above formula for $F=\langle x, x \rangle_{\mathcal{H}}$, we have It\^o's formula for $\langle X, X \rangle_{\mathcal{H}}$. Then by
\begin{align}
\langle X, Y \rangle_{\mathcal{H}} =\frac{\langle X+Y, X+Y \rangle_{\mathcal{H}} -\langle X-Y, X-Y \rangle_{\mathcal{H}}}{4}
\end{align}
in Hilbert space,
 the following It\^o's formula holds for $X$ and $Y$ in form of \eqref{form of X},
\begin{equation}\label{Ito for XY}
\begin{aligned}
\langle X, Y \rangle_{\mathcal{H}} &= \langle X_{0}, Y_{0}\rangle_{\mathcal{H}} +\int \langle X, \d Y \rangle_{\mathcal{H}}\d s+ \int  \langle Y, \d X \rangle_{\mathcal{H}}\d s+\int \d\left\langle~\langle X,Y \rangle,  \langle X,Y \rangle~\right\rangle_{\mathcal{H}}^{\frac{1}{2}}\\
&=\langle X_{0}, Y_{0}\rangle_{\mathcal{H}} +\int \langle X, \d Y \rangle_{\mathcal{H}}\d s+ \int  \langle Y, \d X \d s \rangle_{\mathcal{H}}+\left\langle~\langle X,Y \rangle,  \langle X,Y \rangle~\right\rangle_{\mathcal{H}}^{\frac{1}{2}},\\
\end{aligned}
\end{equation}
where $\langle X,Y\rangle$ means the cross quadratic  variation of $X$ and $Y$ defined above. In this paper, for the convenience of notations, we still use  $\langle X,Y\rangle$ to denote $\left\langle~\langle X,Y \rangle,  \langle X,Y \rangle~\right\rangle_{\mathcal{H}}^{\frac{1}{2}}$.

\item [2.]{\bf Chebyshev's inequality}. Let $Y$ be a random variable in probability space $\left(\Omega, \mathfrak{F}, \mathbb{P}\right)$, $\varepsilon>0$. For every $0<r<\infty$, Chebyshev's inequality reads
\begin{equation}
\mathbb{P}[\left\{|Y| \geq \varepsilon \right\}] \leq \frac{1}{\varepsilon^r}\mathbb{E}\left[|Y|^r\right] .
\end{equation}

\item [3.] {\bf Burkholder-Davis-Gundy's inequality}. \cite{BDG-inequality,Da-Prato-Zabczyk2014} Let $M$ be a continuous local martingale in $\mathcal{H}$. Let $M^{\ast}=\max\limits_{0\ls s\ls t}|M(s)|$, $m \geqslant 1$. $\langle M\rangle_{T}$ denotes the quadratic variation stopped by $T$. Then, there exist constants $K^{m}$ and $K_{m}$ such that
\begin{equation}
 K_{m} \mathbb{E}\left[\left(\langle M\rangle_{T}\right)^{m}\right]\ls \mathbb{E}\left[\left(M^{\ast}_{T}\right)^{2m}\right]\ls K^{m} \mathbb{E}\left[\left(\langle M\rangle_{T}\right)^{m}\right],
\end{equation}
for every stopping time $T$. For $m\geqslant  1$, $K^{m}=\left(\frac{2m}{2m-1}\right)^{\frac{2m(2m-2)}{2}}$, which is equivalent to $e^{m}$ as $m\ra \infty$.
Specifically, for every $m \geqslant 1$, and for every $t \geqslant 0$, there holds
\begin{equation}
\mathbb{E}\left[ \sup _{s \in[0, t]}\left|\int_0^t \Psi(s) \d W(s)\right|^{2m} \right]\leq  K^{m}\left(\mathbb{E}\left[\int_0^{t} \|\Psi(s)\|_{\mathcal{L}_{2}}^2\d s\right]\right)^{m}
\end{equation}

\item [4.] {\bf Stochastic Fubini theorem}. Assume that $(E, \mathscr{E})$ is a measurable space and let
$$
\Psi:(t, \omega, x) \rightarrow \Psi(t, \omega, x)
$$
be a measurable mapping from $\left(\Omega_T \times E, \mathscr{B}(\Omega_T) \times \mathscr{B}(E)\right)$ into $\left(\mathcal{L}^{2}, \mathscr{B}\left(\mathcal{L}^{2}\right)\right)$. Moreover, assume that
\begin{equation}
\int_E \left[\mathbb{E} \int_0^T \langle\Psi(s),\Psi^{\star}(s)\rangle_{\mathcal{H}}\d t\right]^{\frac{1}{2}} \mu(\d x)<+\infty,
\end{equation}
then $\mathbb{P}$-a.s. there holds
\begin{equation}
\int_E\left[\int_0^T \Psi(t, x) \d W(t)\right] \mu(\d x)=\int_0^T\left[\int_E \Psi(t, x) \mu(\d x)\right] \d W(t).
\end{equation}

 \item [5.]\label{Centov thm}{\bf Kolmogorov-Centov's continuity theorem.} \cite{Karatzas1988,Da-Prato-Zabczyk2014} Let $(\Omega, \mathcal{F}, \mathbb{P})$ be a probability space and $\bar{X}$ a process on $[0, T]$ with values in a complete metric space $(E, \mathscr{E})$. Suppose that
\begin{equation}
\mathbb{E}\left[\left|\bar{X}_{t}-\bar{X}_{s}\right|^{a}\right] \leq C|t-s|^{1+b},
\end{equation}
for every $s<t \leq T$ and some strictly positive constants $a, b$ and $C$. Then $\bar{X}$ admits a continuous modification $X$, $\mathbb{P}\left[\left\{X_{t}=\bar{X}_{t}\right\}\right]=1$ for every $t$, and $X$ is locally H\"older continuous for every exponent $0<\gamma<\frac{b }{a},$ namely,
\begin{equation}
\mathbb{P}\left[\left\{\omega: \sum_{0<t-s<h(\omega), t, s \leq T} \frac{\left|X_{t}(\omega)-X_{s}(\omega)\right|}{|t-s|^{\gamma}} \leq \delta\right\}\right]=1,
\end{equation}
where $h(\omega)$ is an strictly positive random variable a.s., and the constant satisfies $\delta>0$.
\end{enumerate}

\smallskip
\smallskip
\section*{Acknowledgments}

The authors would like to express their thanks to Prof. Deng Zhang for the valuable discussions. This work was commenced
when L. Zhang studied in McGill University as a joint Ph.D training student. She would like to
express her gratitude to McGill University. The research of Y. Li was supported in part
by National Natural Science Foundation of China under grants 12371221, 12161141004, and 11831011. Y. Li was also grateful to the supports by the Fundamental Research Funds for the Central Universities and Shanghai Frontiers Science Center of Modern Analysis. The research of M. Mei was supported by NSERC of Canada grant RGPIN 2022-03374 and NNSFC of China grant No. W2431005.


\end{document}